\makeatletter \@addtoreset{equation}{section} \makeatother
\renewcommand\thetable{\thesection.\@arabic\c@table}
\theoremstyle{plain}
\newtheorem{maintheorem}{Theorem}
\newtheorem{mainconjecture}{Conjecture}
\newtheorem{maincorollary}{Corollary}
\newtheorem{mainproblem}{Problem}
\newtheorem{mainquestion}{Question}
\newtheorem{theorem}{Theorem }[section]
\newtheorem{proposition}[theorem]{Proposition}
\newtheorem{lemma}[theorem]{Lemma}
\newtheorem{corollary}[theorem]{Corollary}
\theoremstyle{definition} \theoremstyle{remark}
\newtheorem{remark}[theorem]{Remark}
\newtheorem{example}[theorem]{Example}
\newtheorem{definition}[theorem]{Definition}
\newcommand{\Sc}{\mathbb{S}}
\newcommand{\Lo}{\mathcal{L}}
\newcommand{\C}{\mathbb{C}}
\newcommand{\N}{\mathbb{N}}
\newcommand{\R}{\mathbb{R}}
\newcommand{\cE}{\mathcal{E}}
\newcommand{\cS}{\mathcal{S}}
\begin{document}

\title{From thermodynamic and spectral phase transitions to multifractal analysis}

\author{ Thiago Bomfim,  Victor Carneiro and Afonso Fernandes}

\address{Thiago Bomfim, Departamento de Matem\'atica, Universidade Federal da Bahia\\
Av. Ademar de Barros s/n, 40170-110 Salvador, Brazil.}
\email{tbnunes@ufba.br}
\urladdr{https://sites.google.com/site/homepageofthiagobomfim/}

\address{Victor Carneiro, Departamento de Matem\'atica, Universidade Federal da Bahia\\
Av. Ademar de Barros s/n, 40170-110 Salvador, Brazil.}
\email{victor.carneiro93@gmail.com}

\address{Afonso Fernandes, Departamento de Matem\'atica, Universidade Federal da Bahia\\
Av. Ademar de Barros s/n, 40170-110 Salvador, Brazil.}
\email{afonso$_{-}$43@hotmail.com}

\date{\today}

\begin{abstract}
It is known that all uniformly expanding or hyperbolic dynamics have no phase transition with respect to H\"older continuous potentials. In \cite{BC21}, it is  proved that for all transitive $C^{1+\alpha}-$local diffeomorphism $f$ on the circle, that is neither a uniformly expanding map nor invertible, has a unique thermodynamic phase transition with respect to the geometric potential, in other words, the topological pressure function $\mathbb{R} \ni  t \mapsto  P_{top}(f,-t\log|Df|)$ is  analytic except at a point $t_{0} \in (0 , 1]$. Also it is proved spectral phase transitions, in other words,  the transfer operator $\mathcal{L}_{f,-t\log|Df|}$ acting on the space of H\"older continuous functions, has the spectral gap property for all $t<t_0$ and does not have the spectral gap property for all $t\geq t_0$. Our goal is to prove that thermodynamical and spectral phase transition results imply a multifractal analysis for the Birkhoff spectrum. In particular, we exhibit a class of partially hyperbolic endomorphisms that admit thermodynamical and spectral phase transitions with respect  to the geometric potential, and we describe the multifractal analysis of your central Lyapunov spectrum.
\end{abstract}

\subjclass[2010]{82B26, 37D35, 37C30, 37C40 }
\keywords{Phase transition, Thermodynamical formalism, Transfer operator, Multifractal analysis.}

\maketitle
{\tiny
\tableofcontents
}

\section{Introduction}

This paper is concerned with the phase transition problem of smooth dynamical systems and the consequences of this phase transition for multifractal analysis. From a dynamical systems point of view, (thermodynamics) phase transitions are often associated with the lack of differentiability or analyticity of the topological pressure, as a function of the potential.

 More precisely, given $f : \Lambda \rightarrow \Lambda$  a continuous dynamical system on a compact metric space $\Lambda$  and $\phi : \Lambda \rightarrow \R$ a continuous potential, the variational principle for the pressure asserts that 
$$
P_{top}(f, \phi) = \sup\{h_{\mu}(f) + \int\phi d\mu : \mu  \text{ is an } f-\text{invariant probability}\}
$$
where $P_{top}(f, \phi)$ denotes the topological pressure of $f$ with respect to $\phi$ and $h_{\mu}(f)$ denotes the Kolmogorov-Sinai's metric entropy.  An equilibrium state $\mu_{\phi}$ for $f$ with respect to $\phi$ is a probability measure that attains
the supremum. In the special case that $\phi \equiv 0$, the topological pressure $P_{top}(f, \phi)$ coincides with  the topological entropy  $h_{top}(f)$, which is one of the most important topological invariants in dynamical systems (see e.g. \cite{W82}). 
Thereby, we say that $f$ has a phase transition with respect to $\phi$ if the topological pressure function
$$
\R \ni t \mapsto P_{top}(f, t\phi)
$$
is not analytic.

It follows from \cite{S72, Bow75, BR75} that if $f$ is a hyperbolic or expanding dynamics and $\phi$ is H\"older continuous then $f$ doesn't have phase transition with respect to $\phi$. Indeed,  the thermodynamical properties can be recovered through the Ruelle-Perron-Frobenius operator or transfer operator $\mathcal{L}_{\phi}$ acting on functions $g : M \rightarrow \C$ defined as the following:
 $$
 \mathcal{L}_{f,\phi}(g)(x) := \sum_{f(y) = x}e^{\phi(y)}g(y).
 $$ Using the fact that $\mathcal{L}_{\phi}$ has the spectral gap property (see precise definition in Section \ref{Statement of the main results}) acting on a suitable Banach space\footnote{Usually the suitable Banach space is the H\"older continuous spaces, smooth function spaces, distributions spaces, among others.} it is possible to show the non-existence of phase transition (see e.g. \cite{PU10}). Furthermore, we know that the spectral gap property is also helpful to the study of other finer statistical properties of thermodynamical quantities as large deviation and limit theorems, stability of the topological pressure and equilibrium states or differentiability results for thermodynamical quantities (see e.g. \cite{Ba00,GL06, BCV16,BC19}).

  On the other hand, several non-uniformly hyperbolic examples are known to present phase transitions with respect to regular potentials. For example: a large class of maps of the interval with indifferent fixed point, certain non-degenerate smooth interval maps, porcupine horseshoes, between others (see e.g. \cite{Lo93, PS92, CRL13, CRL15, CRL19, CRL21, DGR14, IRV18, V17}.
 Despite that, a characterization of the set of dynamics with phase transitions or the associated transfer operators doesn't have the spectral gap property on a suitable Banach space seems still out of reach.

In \cite{BC21} proposed the following problem and conjecture:

\begin{mainproblem}\label{probA}
What mechanisms are responsible for the existence of thermodynamical phase transitions for $C^{1}-$local diffeomorphism, with positive topological entropy, and H\"older continuous potentials?
\end{mainproblem}

 \begin{mainconjecture}\label{conjA}
Let $f : M\rightarrow M$ be a $C^{2}-$local diffeomorphism on a Riemannian manifold $M$. If $f$ is not a uniformly expanding or uniformly hyperbolic endomorphism then there exists a suitable potential $\phi$ such that $\mathcal{L}_{f, \phi}$ has no the spectral gap property acting on a suitable Banach space (H\"older continuous or smooth functions).
\end{mainconjecture}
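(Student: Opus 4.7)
The plan is to extend the one-dimensional strategy of \cite{BC21} by localizing the failure of uniform hyperbolicity and packaging it as a thermodynamic obstruction. First, since $f$ is neither uniformly expanding nor a uniformly hyperbolic endomorphism, by compactness of $M$ one can extract, on a nontrivial compact invariant set, a $Df$-invariant (or asymptotically invariant) continuous splitting $TM = E^{u}\oplus E^{c}$, where $E^{u}$ carries uniform expansion and $E^{c}$ contains a neutral/defect direction: there is a sequence of points (or a compact invariant subset) on which $\|Df|_{E^{c}}\|\to 1$. The natural candidate ``suitable'' potential is then the one-parameter geometric family $\phi_{t} = -t\log\|Df|_{E^{c}}\|$ (or $-t\log|\det Df|$ when no splitting is available), and we seek a critical $t_{0}\in(0,1]$ at which $t\mapsto P_{top}(f,\phi_{t})$ loses analyticity.

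The core mechanism, mirroring \cite{BC21}, is a competition between two regimes of equilibrium states: a \emph{hyperbolic branch}, whose equilibrium states are absolutely continuous along $E^{u}$ and carry positive entropy, giving a strictly convex, strictly decreasing function $t\mapsto P^{\mathrm{hyp}}(t)$; and a \emph{neutral branch}, realized by measures supported on the set where $\|Df|_{E^{c}}\|=1$, producing the affine function $P^{\mathrm{neu}}(t)\equiv 0$. Their first crossing defines $t_{0}$, and the variational principle selects $P_{top}(f,\phi_{t})=\max\{P^{\mathrm{hyp}}(t), P^{\mathrm{neu}}(t)\}$, which is analytic on $(-\infty,t_{0})$ and affine on $[t_{0},\infty)$ but not real-analytic at $t_{0}$. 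Producing the hyperbolic branch rigorously should rely on an induced expanding system (Markov tower over the hyperbolic region), while the neutral branch is produced by direct variational estimates at the neutral set.

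Having a non-analytic point $t_{0}$ then forces the lack of spectral gap at $t=t_{0}$: if $\mathcal{L}_{f,\phi_{t_{0}}}$ admitted a spectral gap on a Banach algebra $\mathcal{B}$ of H\"older functions, analytic perturbation theory (Kato) would yield a local analytic branch $t\mapsto \lambda(t)$ of leading eigenvalues, hence $t\mapsto \log\lambda(t) = P_{top}(f,\phi_{t})$ would be analytic in a neighborhood of $t_{0}$, contradicting the phase transition. This step uses the $C^{2}$ hypothesis on $f$ to guarantee that $\log\|Df|_{E^{c}}\|$ is H\"older, so that $t\mapsto\phi_{t}$ is an analytic family in $\mathcal{B}$.

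The principal obstacle is the construction of the splitting $E^{u}\oplus E^{c}$ for non-invertible, non-uniformly hyperbolic $C^{2}$ endomorphisms on a general manifold. In contrast with the circle setting of \cite{BC21}, in higher dimensions: (i) an invariant dominated splitting is not automatic from the mere failure of uniform expansion; (ii) for endomorphisms, backward orbits are not unique, so invariance must be formulated on the natural extension, complicating the definition of $E^{c}$; and (iii) the neutral set may have intricate geometry, requiring a Pesin-block decomposition or an inducing scheme to isolate it. A second obstacle is to guarantee uniqueness and analytic dependence of the hyperbolic-branch equilibrium state, which typically demands specification or a Markov structure not available a priori; a reasonable intermediate target is therefore to establish the conjecture for the class of partially hyperbolic endomorphisms admitting Markov towers, exactly the class where the multifractal analysis of the companion result is carried out in this paper.
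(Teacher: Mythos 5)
This statement is labeled a \emph{conjecture} in the paper and remains open; the paper does not prove it. What the paper does is provide evidence: Theorem~\ref{thC} (from \cite{BC21}) settles it for circle maps, and Theorem~\ref{mainthA} settles it for the toy class $\mathcal{D}^{r}$ of skew-products over expanding linear toral endomorphisms. Your proposal is not a proof either, as you rightly acknowledge at the end, so the fair comparison is between your sketch and the paper's strategy for Theorem~\ref{mainthA}.

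Restricted to that setting, your outline is close in spirit but differs in two substantive ways, one of which is a genuine gap. First, the easy part: your final step argues ``spectral gap at $t_{0}$ implies local analyticity of $t\mapsto P_{top}(f,\phi_{t})$, contradiction.'' This does identify one potential without spectral gap. But passing from analyticity of the leading eigenvalue to analyticity of the pressure requires the identity $P_{top}(f,\phi_{t})=\log\rho(\mathcal{L}_{f,\phi_{t}})$, which is not automatic; the paper derives it (Lemma~\ref{Lemapress}) under dense pre-orbits and a generating partition by injectivity domains, hypotheses you never verify or even mention. Second, and more seriously, your ``two competing branches'' picture with a neutral branch $P^{\mathrm{neu}}\equiv 0$ does not match the actual mechanism in the paper. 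For $F\in\mathcal{D}^{r}$ the pressure is not eventually zero but eventually equal to $h_{top}(g)>0$, and the transition is located by comparing the lower bound $P(t)\ge h_{\mathrm{Leb}\times\eta}(F)=h_{top}(g)$ (coming from the zero-exponent measure $\mathrm{Leb}\times\eta$) with the Margulis--Ruelle upper bound $P(1)\le h_{top}(g)$, not by a competition of two analytic branches. More importantly, the paper's proof that the spectral gap fails for \emph{all} $t\ge t_{0}$ (Proposition~\ref{propat}) is not via non-analyticity at a single point -- indeed $P$ is real-analytic (constant) on $(t_{0},\infty)$ -- but via Nagaev's method: spectral gap at $s$ forces $P''(s)>0$ through a coboundary/rigidity argument, and strict convexity is incompatible with $P$ being constant. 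Your non-analyticity argument alone cannot recover this and would wrongly leave open the possibility of a gap for $t>t_{0}$.

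On the general conjecture, your self-diagnosis is correct and aligns with the paper's own framing: in higher dimension there is no reason for a failure of uniform expansion to produce a $Df$-invariant splitting $E^{u}\oplus E^{c}$, for endomorphisms the unstable direction lives on the natural extension, and the neutral set need not support a convenient zero-exponent invariant measure. This is exactly why the paper posits the statement as Conjecture~\ref{conjA} and restricts the proof to $\mathcal{D}^{r}$, where the splitting, the zero-exponent measure $\eta$, topological conjugacy to an expanding map (hence specification, expansivity, generating partitions), and the Campbell--Latushkin bounds on the essential spectral radius are all available. Treating these as hypotheses rather than conclusions is the right intermediate target, but one should be explicit that even in that restricted setting the hard steps are the essential-spectral-radius estimate for $t<t_{0}$ and the Nagaev/coboundary rigidity for $t\ge t_{0}$, neither of which appears in your outline.
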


As a first step in studying these issues, in \cite{BC21} answers were obtained for the Problem \ref{probA} and Conjecture \ref{conjA} when $M = \Sc^1$ is the circle. More formally:


\begin{remark}
Given $r \geq 1$ an integer and $\alpha \in (0 , 1]$ we denite by $C^{r}(\Sc^1 , \C)$ and $C^{\alpha}(\Sc^1 , \C)$ the Banach spaces of $C^r$ functions and  $\alpha-$H\"older continuous complex functions whose domain is $\Sc^1$, respectively. 
\end{remark}

\begin{theorem}\label{thC}	\cite{BC21}
Let $E=C^{\alpha}(\Sc^{1} , \C)$ or $C^{r}(\Sc^{1} , \C)$ be and let $f:\Sc^1 \to \Sc^1$ be a transitive non invertible $C^{1}-$local diffeomorphism with $Df \in E$. If $f$ is not an expanding dynamics then there exists  $t_{0} \in (0 , 1]$ such that the transfer operator  $\mathcal{L}_{f, -t\log|Df|}$ has the spectral gap property on $E$ for all $t < t_{0}$ and has no the spectral gap property for all $t \geq t_{0}$.
\end{theorem}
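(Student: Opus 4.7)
My plan is to exploit the non-expansion hypothesis to locate a neutral periodic point, use it to define the phase-transition value $t_0$, and then analyze the two regimes $t<t_0$ and $t\geq t_0$ separately. Since $f:\Sc^1\to\Sc^1$ is a transitive non-invertible $C^1$-local diffeomorphism that fails to be uniformly expanding, a compactness argument combined with the local diffeomorphism structure produces a periodic point $p$ (after replacing $f$ by a suitable iterate we may assume it is fixed) with $\la_p:=|Df(p)|\leq 1$. The topological pressure
$$
P(t):=P_{top}(f,-t\log|Df|)
$$
is convex and non-increasing; moreover $P(0)=h_{top}(f)>0$ by non-invertibility and transitivity, while the variational principle applied to $\delta_p$ yields $P(t)\geq -t\log\la_p\geq 0$. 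I would then set
$$
t_0:=\sup\{t\geq 0:P(t)>-t\log\la_p\}
$$
and verify, using convexity of $P$ together with Bowen's formula (applied in the expanding limit), that $t_0\in(0,1]$.

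For the spectral-gap regime $t<t_0$, the plan is to build a first-return (inducing) scheme on $\Sc^1\setminus V$, where $V$ is a small neighborhood of $p$. On the complement the first-return map $F$ is uniformly expanding with bounded distortion, and the classical Ruelle--Perron--Frobenius theorem yields a spectral gap for the induced transfer operator acting on $E$. The condition $t<t_0$ is precisely what makes the tail sum
$$
\sum_n \Leb(\{R=n\})\,e^{-tS_n\log|Df|}
$$
converge geometrically, allowing a Young-tower/renewal construction that transfers the spectral gap from the induced operator back to $\mathcal{L}_{f,-t\log|Df|}$. The implementation proceeds via a Lasota--Yorke inequality in $E$ followed by the Hennion--Ionescu-Tulcea--Marinescu theorem to deduce quasi-compactness, and positivity of $\mathcal{L}_{f,-t\log|Df|}$ together with the strict maximality of the leading eigenvalue to upgrade quasi-compactness to a genuine spectral gap.

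For $t\geq t_0$ the strategy is to derive a contradiction from the assumption of spectral gap. At $t=t_0$, Kato's perturbation theory for isolated simple eigenvalues would force the leading eigenvalue $\la(t)$, and hence $P(t)=\log\la(t)$, to be real-analytic in a neighborhood of $t_0$, directly contradicting the phase transition. For $t>t_0$, the equilibrium state of $-t\log|Df|$ must be the Dirac mass $\delta_p$, because the variational upper bound $P(t)=-t\log\la_p$ is attained precisely there; but a spectral gap on $E$ would force the equilibrium state to be equivalent to a conformal measure with a strictly positive H\"older (resp.\ $C^r$) density, hence of full support, which is incompatible with $\mu_t=\delta_p$.

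The main technical obstacle is the careful inducing construction used in the $t<t_0$ regime: one needs bounded-distortion estimates and precise tail-decay bounds to transfer the spectral gap from the first-return operator to $\mathcal{L}_{f,-t\log|Df|}$ on $E$, uniformly on compact subintervals of $(0,t_0)$. A more subtle point is ruling out spectral gap at the boundary value $t=t_0$ itself, which relies on the analyticity-versus-phase-transition dichotomy rather than on degeneration of the equilibrium state.
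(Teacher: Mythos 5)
You take a genuinely different route from the paper in two of the three pieces, and one of them contains a real gap. First, the paper (following \cite{BC21}, whose scheme is reproduced for the toy model in Section~\ref{Proofs}) does not locate a neutral periodic point: for a general transitive, non-invertible, non-expanding circle map there need be no periodic $p$ with $|Df^n(p)|\le 1$; what \cite{CLR03} supplies is an ergodic measure $\eta$ with $\chi_\eta(f)=0$, possibly supported on a non-periodic orbit. With such an $\eta$ one gets $P(t)\ge 0$ for all $t$, $P(1)\le 0$ by Margulis--Ruelle, and $t_0:=\inf\{t:P(t)=0\}$. Second, for $t<t_0$ the paper does \emph{not} build an induced Young tower: it bounds the essential spectral radius via the Campbell--Latushkin inequality (Theorem~\ref{Lat}), combines this with strict monotonicity of $P$ on $(-\infty,t_0)$ to obtain $\rho_{ess}(\mathcal{L}_{f,-t\log|Df|}|_E)<\rho(\mathcal{L}_{f,-t\log|Df|}|_E)$, then uses a purely topological lemma (dense preorbits upgrade quasi-compactness to a genuine gap) together with semicontinuity of the spectrum to push the gap all the way up to $t_0$. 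Your inducing/Lasota--Yorke plan is classical for Manneville--Pomeau-type models, but its feasibility at this level of generality is doubtful: the hypotheses provide no Markov inducing base and no control on the set where expansion degenerates (which need not be a finite union of neutral periodic orbits), so the tail estimates and the renewal construction have no obvious starting point.

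For $t\ge t_0$ the paper gives one unified argument via Nagaev's method: a spectral gap at $s$ forces the CLT variance $\sigma^2=P''(s)$ to be strictly positive, since $\sigma^2=0$ would make $\log|Df|$ cohomologous in $E$ to a constant $a$, whence $a=0$ (the spectrum contains a zero exponent) and $f$ would be invertible; hence $P$ is strictly convex near $s$, contradicting $P\equiv 0$ on $[t_0,\infty)$. Your Kato-analyticity argument at $t=t_0$ is a fine special case of this, but your argument for $t>t_0$ has a gap: the equilibrium state at $t$ need not be $\delta_p$ (and $p$ may not exist); one only deduces that it has zero entropy and zero Lyapunov exponent, and there may be many such measures. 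The full-support contradiction you want is salvageable --- a full-support equilibrium state produced by a spectral gap has continuous Jacobian $J_{\mu_t}f=\rho\, e^{t\log|Df|}\,(h_t\circ f)/h_t$, and $h_{\mu_t}(f)=0$ with full support would force $J_{\mu_t}f\equiv 1$ and hence $\deg f=1$ --- but this is a different argument from the one you wrote, and the Nagaev route handles $t=t_0$ and $t>t_0$ uniformly and more economically.
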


As a consequence, of the previous theorem, was obtained an effective thermodynamic phase transition:

\begin{corollary}\label{mainthD}	\cite{BC21}
Let $f:\Sc^1 \to \Sc^1$ be a transitive non invertible $C^{1}-$ local diffeomorphism with $Df$ H\"older continuous. If $f$ is not an expanding dynamics then the topological pressure function $\R \ni t \mapsto P_{top}(f , -t\log |Df|) $ is  analytical, strictly decreasing and strictly convex in $(-\infty , t_{0})$ and constant equal to zero in $[t_{0} , +\infty)$ (see figure \ref{graphic P}). In particular, $f$ has a unique thermodynamic phase transition with respect to geometric potential $-\log|Df|$ in $t_{0}$.
\end{corollary}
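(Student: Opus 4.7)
The plan is to convert the spectral information from Theorem \ref{thC} into the thermodynamic picture. For every $t \in \R$, the potential $-t\log|Df|$ lies in $E$ and depends linearly on $t$, so $t \mapsto \mathcal{L}_{f, -t\log|Df|}$ is a real analytic family of bounded operators on $E$. On the interval $(-\infty, t_0)$, Theorem \ref{thC} supplies the spectral gap property, so the leading eigenvalue $\lambda_t$ is simple and isolated, and Kato's analytic perturbation theory gives real analyticity of $t \mapsto \lambda_t$. Combined with the identification $P_{top}(f, -t\log|Df|) = \log \lambda_t$, which holds throughout the spectral gap regime via the Ruelle--Perron--Frobenius theorem, this yields analyticity of the pressure on $(-\infty, t_0)$.

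Strict convexity and strict decrease also flow out of the spectral gap machinery. It produces a unique equilibrium state $\mu_t$ (the product of the right eigenfunction with the conformal eigenmeasure), and differentiating the eigenvalue equation gives the standard formulas $P'(t) = -\chi(\mu_t)$ and $P''(t) = \sigma_t^2(\log|Df|) \geq 0$, with strict inequality unless $\log|Df|$ is cohomologous to a constant in $E$. The latter is incompatible with the non-expanding hypothesis: a continuous coboundary $\log|Df| = c + u \circ f - u$ would propagate the value $c$ to $\tfrac{1}{n}\log|Df^n|$ up to a bounded error, forcing uniform expansion of some iterate whenever $c > 0$, while positive topological entropy (via the variational principle and Ruelle's inequality) rules out $c \leq 0$. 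Strict decrease then reduces to $\chi(\mu_t) > 0$, which follows because transitivity of the non-invertible $f$ precludes attracting periodic orbits and hence invariant measures of strictly negative exponent, and the positive entropy of the fully supported $\mu_t$ upgrades $\chi(\mu_t) \geq 0$ to a strict inequality via Ruelle.

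The remaining assertion, $P \equiv 0$ on $[t_0, +\infty)$, is the subtle step, and I would prove it by sandwiching. The non-expanding hypothesis, via a Ma\~n\'e-type compactness argument on points where expansion fails, produces an invariant probability $\mu_*$ with $\chi(\mu_*) = 0$; Ruelle's inequality forces $h_{\mu_*}(f) = 0$, so inserting $\mu_*$ in the variational principle gives $P(t) \geq 0$ for every $t$. In the other direction, for any ergodic $\mu$ Ruelle's inequality reads $h_\mu \leq \chi^+(\mu) = \chi(\mu)$ (the latter equality using $\chi(\mu) \geq 0$), so for $t \geq 1$ one has $h_\mu - t\chi(\mu) \leq (1-t)\chi(\mu) \leq 0$, whence $P(t) \leq 0$. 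Thus $P \equiv 0$ on $[1, +\infty)$, and to close the gap down to $t_0$ I would argue that as $t \nearrow t_0$ the shrinking spectral gap forces the leading eigenvalue to tend to the essential spectral radius, which in this setup equals $1$; continuity of the pressure then gives $P(t_0) = 0$. Convexity of $P$ together with $P(t_0) = P(1) = 0$ and $P \geq 0$ pins $P \equiv 0$ on $[t_0, 1]$, completing the picture. The main obstacle I foresee is precisely this identification $P(t_0) = 0$ --- that the spectral threshold of Theorem \ref{thC} coincides with the first zero of the pressure --- which requires fine information on the essential spectrum of $\mathcal{L}_{f, -t_0 \log|Df|}$ on $E$ drawn from the construction underlying Theorem \ref{thC}.
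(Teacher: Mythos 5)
Your overall architecture — analyticity from the spectral gap via Kato/RPF, strict convexity by showing the Nagaev variance cannot vanish, and a sandwich for $P\equiv 0$ past the transition — matches the route the paper takes for the higher-dimensional analogue (Theorem~\ref{mainthA}), and your cohomology argument (ruling out $\log|Df|$ cohomologous to a constant $c$ by separating the cases $c>0$ and $c\le 0$) is a valid alternative to the paper's degree/invertibility contradiction. The ``right half'' of the sandwich ($P\ge 0$ from a zero-exponent measure with zero entropy, $P\le 0$ for $t\ge 1$ from Ruelle) is exactly the paper's Proposition~\ref{L3}.

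The genuine gap is the step you yourself flag: showing that the spectral threshold $t_0$ from Theorem~\ref{thC} coincides with the first zero of the pressure. Your heuristic that ``the shrinking spectral gap forces the leading eigenvalue to tend to the essential spectral radius, which equals $1$'' has two unproven ingredients — that the gap actually shrinks to zero (rather than the leading eigenvalue persisting while a nontrivial part of the spectrum crosses), and that $\rho_{ess}$ at $t_0$ equals $1$. The paper closes exactly this gap by a quantitative essential-spectral-radius bound of Campbell--Latushkin type (cf.\ Theorem~\ref{Lat} and Lemma~\ref{essest}): on the circle it gives, roughly, $\rho_{ess}(\mathcal{L}_{-t\log|Df|}|_{C^\alpha}) \le e^{P(t+\alpha)}$, while $\rho = e^{P(t)}$. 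Combined with strict decrease of $P$ before the floor, this shows the transfer operator is quasi-compact (hence has spectral gap, by the strong transitivity argument) for every $t$ with $P(t)>0$, forcing the spectral threshold to equal $\inf\{t: P(t)=0\}$. Without such an estimate your sandwich cannot be closed at $t_0<1$. Note also that the paper avoids your difficulty entirely by \emph{defining} $t_0$ as the first zero of $P$ (Proposition~\ref{L3}) and then proving the spectral characterization for that $t_0$, rather than the other way around.

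One smaller point: for strict decrease you argue $\chi(\mu_t)>0$ via ``positive entropy of $\mu_t$,'' but positivity of $h_{\mu_t}$ is not immediate for $t<0$ from what you've set up (full support alone doesn't give it). A cleaner route: if $\chi(\mu_t)=0$ then Ruelle gives $h_{\mu_t}\le 0$, so $P(t)=h_{\mu_t}-t\chi(\mu_t)\le 0$, contradicting $P(t)>0$ for $t<t_0$. Alternatively, the paper's Lemma~\ref{L7} argues directly that any flat stretch of $P$ before $t_0$ forces a zero-exponent equilibrium state and thus $P=0$, a contradiction.
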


\begin{figure}[htb]
\centering
\includegraphics[width=8.1cm, height=4.5cm]{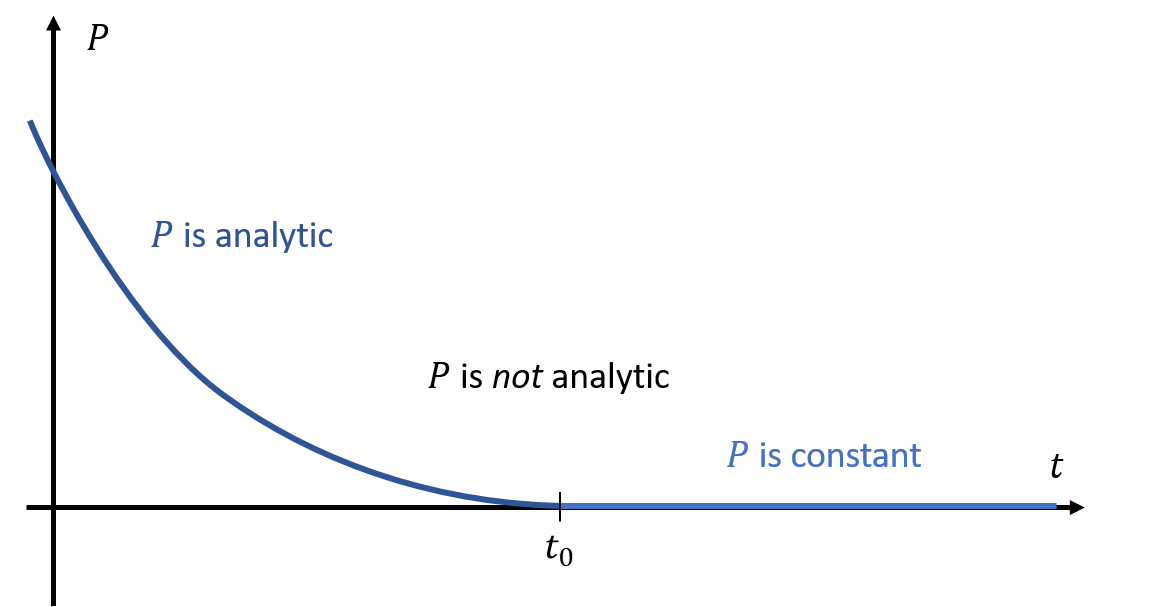}
\caption{Graphic of the topological pressure function}
\label{graphic P}
\end{figure}

We observe that, by \cite{W92}, if $f$ is expansive and has finite topological entropy then the lack of differentiability of the topological pressure function is related to the non-uniqueness of the equilibrium states associated with potential $\phi$. Furthermore, when we have the spectral gap property, the derivatives of the topological pressure function are related to thermodynamics quantities such as equilibrium states and standard deviation (see e.g. \cite{S12}).

It is necessary to understand other thermodynamic consequences for the existence of thermodynamic and spectral phase transitions. As the first step in this direction, we are interested in the implications for multifractal analysis.

In multifractal analysis, we study invariant sets and measures with a multifractal structure. We are essentially measuring the size of those sets, in the sense of Hausdorff dimension or topological entropy, for instance.
Given $\psi_n: M\rightarrow\mathbb{R}$ a sequence of continuous functions and $I \subset \R$ an interval, define
\begin{align*}
X(I)=\Big\{x\in M;\lim\limits_{n\rightarrow\infty}\frac{1}{n}\psi_n(x)\in I\Big\}.
\end{align*} A Multifractal Analysis for this sequence  means understanding these sets metrically, geometrically (Hausdorf dimension), topologically (topological entropy, topological pressure), thermodynamically (variational principles), etc...   

In dynamical systems, given $F : M \rightarrow M$ a continuous transformation on a compact metric space, we are particularly interested in the following sequences
\begin{itemize}
\item $\psi_n=\sum_{i=0}^{n-1}\varphi\circ F^i$ (Birkhoff averages)
\item $\psi_n=\log||DF^n||$ and $\varphi_n=\log||DF^{-n}||^{-1}$ (extremal Lyapunov exponents)
\item $\psi_n=\log\mu(B(.,n,\epsilon)),$ where $\mu$ is an $F-$invariant probability and $B(x,n,\epsilon)$ is a dynamical ball (local entropy)
\end{itemize}

Suppose now that the dynamics $F$ is a codimension one partially hyperbolic endomorphism on 
the manifold $M$, that is, $F$ is a $C^1-$local diffeomorphism and there are a continuous splitting  of the tangent bundle $TM = E^s \oplus E^c \oplus E^u$, positive constants $\lambda $, $\sigma$ and $c$ such that:
\begin{enumerate}
    \item $E^s$ is $DF-$invariant;
    \item $||DF^n_{|E^s} || < e^{-\sigma n + c}$ and $||DF^n_{|E^u} || >^{-\lambda n - c}$;
    \item $||DF^n_{|E^c} ||  > e^{\sigma n - c}||DF^n_{|E^s} || $ and $||DF^n_{|E^c} ||  < e^{-\lambda n + c}||DF^n_{|E^u} || $;
    \item $E^c$ is one dimensional.
\end{enumerate}
The subbundles $E^s, E^c$ and $E^u$ are called stable, central and unstable, respectively. Observe that $E^c$ is $DF-$invariant and the subset of codimensional one partially hyperbolic endomorphism is open in the space of local diffeomorphisms. The partially hyperbolic endomorphisms are a natural generalization of the partially hyperbolic diffeomorphism for the non-invertible context. When $M = \mathbb{T}^2$ is the two dimensional torus and $E^s = \{0\}$, we know that there exists an open subset of robust transitive partially hyperbolic endomorphisms that are not expanding dynamics \cite{LP13}; furthermore a generic partially hyperbolic endomorphism admits finitely many ergodic physical measures and the union of its basins of attraction has total Lebesgue measure \cite{T05}.

Since  $E^c$ is $DF-$invariant,  then the central Lyapunov exponent 
$$\lambda^c(x) = \lim\frac{1}{n}\log ||DF^n_{|E^c_x}||$$ is well defined on a total probability subset (see e.g. the Oseledets theorem in \cite{V14}). We are interested in the Multifractal Analysis of $\lambda^c$, that is, of the sequence $\psi_n(x):= \log ||DF^n_{|E^c_x}||$. Since $E^c$ is one dimensional, then $\psi_n = \sum_{j = 0}^{n-1}\varphi \circ F^{j}$, where $\varphi := \log ||DF_{|E^c}||$, which connects the central Lyapunov exponent with the Birkhoff's theorem.

The study of the topological pressure or Hausdorff dimension of the subsets where the Birkhoff average diverges or converges to a fixed interval can be traced to Besicovitch, and this topic has had contributions by many authors in recent years
(see \cite{DK, Climenhaga, GR, JR,PW97,PW99,T,TV99, Daniel, ZC13,BV17,IJT17,JR21}).

Therefore, given $[a , b] \subset \R$ a closed interval, we define the following sets:
$$
L^{c}_{a , b} := X_{\varphi}([a , b])=\{x\in M; \lambda^c(x)\in [a , b]\} \text{ and }$$
$$
\hat{L} :=  \{ x \in M : \nexists \lim \frac{1}{n}\log ||DF^{n}(x)_{|E^c_x}||\}.
$$
In particular, $L^{c}(\alpha):= L^{c}_{\alpha,\alpha}$ is the set of points $x\in M$ which the central Lyapunov central is exactly $\alpha$. We notice that the previous sets decompose the space of orbits.
From the ergodic point of view, given $\mu$ an ergodic $F$-invariant probability we have $\mu(L^{c}_{[a,b]})$ is $0$ or $1$ by the Birkhoff ergodic theorem, depending on whether $\int\varphi d\mu \notin I$ or $\int\varphi d\mu\in [a,b]$, respectively. Moreover $\mu(\hat{L}) = 0$. So these sets may not be that interesting from the measure point of view, but we have to be careful because these sets can be topologically large. By Thompsom \cite{T} and Lima-Varandas \cite{LV21}, if $F$ has topological properties such as the \emph{specification property} or \emph{gluing orbit property} and $\hat{L} \neq\emptyset$, then $\hat{L}$ is a Baire residual subset and $P_{\hat{L}}(f,\phi)=P_{top}(f,\phi)$, where $P_{Z}(f,\phi)$ is the topological pressure for the continuous potential $\phi$ on the set $Z$.
For this reason, we also consider the sets
$$
E_{a,b} := \hat{L} \cap \Big\{x \in M :   \limsup_{n\to\infty} \frac{1}{n}\log ||DF^{n}_{|E^{c}_{x}}|| \in  [a,b] \text{ or } \liminf_{n\to\infty} \frac{1}{n}\log ||DF^{n}_{|E^{c}_{x}}|| \in  [a,b]\Big \}
$$
$$
\text{and } D_{\mu, \epsilon} := \big\{x \in M : \Big|\lambda^{c}(x) - \int \varphi d\mu \Big| \geq \epsilon\big\}.
$$

We propose then:

\begin{mainquestion}\label{queA}
Given $C_{a, b} = L^{c}_{a, b}$ or $E_{a,b}$, describe the  following functions
$$
 [a,b]\mapsto h_{C_{a,b}}(F) \text{ , }  [a,b]\mapsto HD(C_{a,b}),
 $$
 $$
 \epsilon \mapsto h_{D_{\mu, \epsilon}}(F) \text{ and }\epsilon \mapsto HD(D_{\mu, \epsilon}),
$$
where $h_{Z}(F)$ denotes the topological entropy restricted to the set $Z$ and $HD(Z)$ denotes the Hausdorff dimension of the set $Z$. Do these functions vary continuously? Smoothly? Analytically?
\end{mainquestion}

In \cite{BV17} a connection was established between multifractal analysis, when we measure these sets through topological pressure, and large deviation rates.
The Large Deviations Theory studies, among other things, the rate of convergence at which the temporal average of a sequence of random variables converges to limit distribution. In Dynamical Systems, these ideas help estimate the speed with which the time averages of typical points for ergodic invariant measures converge to the respective space average. In these terms, we are interested in estimating the speed at which
$$
\mu \Big(\Big\{x \in M : \Big|\frac{1}{n}\sum_{i=0}^{n-1}\psi(F^{i}(x)) - \int \psi d\mu \Big| > \epsilon \Big\} \Big)
$$
converges to zero when $n \mapsto +\infty$, where $\mu$ is a $F-$invariant and ergodic probability. (see e.g. \cite{Y90,K90, KN91})

There are some ways to obtain large deviations principles in dynamical systems, one of the ways is through strong properties of the associated transfer operator. With spectral gap in a suitable Banach space, we apply the probabilistic/functional approach derived from theorems Gartner-Ellis's theorem (see e.g. \cite{BCV16,BC19}).
This approximation also provides a lot of information about the so-called free energy function and the large deviation rate function.

In view of what we discussed above, we propose that results  such as spectral phase transition
(Theorem \ref{thC}) should be enough to answer  Question \ref{queA}.
Extending \cite{BC21}, we will show a new class of high-dimensional local diffeomorphisms with thermodynamic and spectral phase transitions. Using this class of local diffeomorphism as a toy model, we will show consequences for the multifractal analysis of central Lyapunov exponents.

This paper is organised as follows. In Section~\ref{Statement of the main results} we provide some definitions
and the statement of the main results on thermodynamical and spectral phase transition, and multifractal analysis. 
In Section~\ref{prelim} we recall the necessary framework on Topological Dynamics, Thermodynamical Formalism and Transfer operator. Sections~\ref{Proofs} and \ref{ProofsB} are devoted to the proof of the main results. 
\section{Statement of the main results}\label{Statement of the main results}

This section is devoted to the statement of the main results.

Throughout the paper we shall denote the circle $\{z \in \C : |z| = 1\}$ by $\Sc^1$ and the $n-$torus $ \Sc^1 \times \cdots \times \Sc^1$ by $\mathbb{T}^n$.

\subsection{Toy model}

This section  introduces the class of local diffeomorphisms to that we will apply our results.

Let $\mathcal{D}^{r}$ be the space of $C^{r}-$local diffeomorphism $F : \mathbb{T}^{d} \times \Sc^1 \rightarrow \mathbb{T}^{d} \times \Sc^1$ such that $F(x , y) = (g(x) , f_{x}(y))$, $\forall (x , y) \in \mathbb{T}^{d} \times \Sc^1$, where:

\begin{enumerate}
    \item $g$ is an expanding linear endomorphism;
    \item $deg(F) > deg(g),$ where $deg$ is the topological degree of a local homeomorphism;
    \item there exists $\eta$, a probability on $\Sc^1$ that is $f_{x}-$invariant, with zero mean Lyapunov exponent $\chi_{\eta}(f_{x}) = \int \log |Df_{x}| d\eta = 0$ for all $x \in \Sc^{1}$;
    \item $F$ is topologically conjugated to a uniformly expanding dynamic.
\end{enumerate}

 \ 
 
Unless we suppose a big expansion of $g$, our toy model will admit dominated decomposition. In this case, $F$ will be a codimensional one partially hyperbolic endomorphism contained in \cite{T05}.

Note that item (2) is equivalent to the existence of an $x \in \Sc^{1}$  with $f_{x}$ not invertible.

It follows from the item (4) and the connectedness of $\mathbb{T}^{d} \times \Sc^1$, that $F$ has the following topological properties: strong transitivity,  periodic specification property and expansivity. Additionally, we have $h_{top}(F) = \log deg(F)$.

Despite our toy model $F$ being in a topological class of an expanding dynamic, it admits zero Lyapunov exponents. In particular, $F$ is not a uniformly expanding dynamic.

Observe that if a local diffeomorphism $F : \mathbb{T}^{d} \rightarrow \mathbb{T}^{d}$ is (positively) expansive then there exists a metric on $\mathbb{T}^{d}$, compatible with the topology, such that the mapping $F$ is a uniformly expanding dynamics with respect to this metric. In particular, $F$ will be topologically conjugated to a uniformly expanding dynamic (for more details, see \cite{CR80}).

The following example shows our context extending the one-dimensional case.

\begin{example}
Let $f: \Sc^1 \rightarrow \Sc^1$ be a transitive non invertible $C^{r}-$local diffeomorphism. It follows from \cite{CM86} that $f$ is topologically conjugated to uniformly expanding dynamics.  Suppose that $f$ is not a uniformly expanding dynamics, then it follows of \cite{CLR03} that $f$ admits an ergodic $f-$invariant probability $\eta$ such that $\chi_{\eta}(f) = \int \log |Df_{x}| d\eta = 0$. Therefore, the skew-product 
$$F : \Sc^1 \times \Sc^1 \xrightarrow[(x , y) \mapsto (2x , f(y))]{} \Sc^1 \times \Sc^1$$ will belong to $\mathcal{D}^{r}$.
\end{example}


The next class of examples will show that our toy model contains an interesting class of local diffeomorphisms in higher dimension; such examples can be seen as differentiable versions of random choices of intermittent dynamics.

Let $x_{1} < x_{2} < \ldots < x_{k} = x_{1}$ be points in $\Sc^1$. Define 
$V(\{x_{j}\}_{j = 1}^{k})$ as the set of  maps $f : \Sc^1 \rightarrow \Sc^1 \text{ transitive } C^{r}-\text{local diffeomorphism such that }
f_{|(x_{j} , x_{j+1})
} \text{ is injective, } 
$
$f([x_{j} , x_{j+1}]) = \Sc^1 , f(x_{j}) = x_{1}, Df(x_{j}) = 1 \text{ and } |Df(x)| > 1
\text{ for all } x \neq x_{j} \text{ and } j = 1 , \ldots, k .$

\begin{proposition}
Fix $x_{1} < x_{2} < \ldots < x_{k} = x_{1}$  points in $\Sc^1$. Let $$F : \mathbb{T}^{d} \times \Sc^1 \xrightarrow[(x , y) \mapsto  (g(x) , f_{x}(y)) ]{} \mathbb{T}^{d} \times \Sc^1$$ be  a $C^{r}-$skew-product such that $g$ is an expanding linear endomorphism and each $f_{x} \in V(\{x_{j}\}_{j = 1}^{k})$. Then $F \in\mathcal{D}^{r}$
\end{proposition}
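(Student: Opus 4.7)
The plan is to verify the four conditions defining $\mathcal{D}^r$; most follow directly from the hypotheses on $g$ and on $V(\{x_j\})$, so the only substantive step is condition (4).

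Condition (1) is immediate. For (2), each $f_x \in V(\{x_j\})$ has topological degree $k-1 \geq 2$, since the $k-1$ arcs $[x_j,x_{j+1}]$ are the branches on which $f_x$ is injective with image $\Sc^1$; as $F$ is a skew-product over $g$, this gives $\deg(F) = \deg(g)\cdot(k-1) > \deg(g)$. For (3), take $\eta := \delta_{x_1}$: since $f_x(x_j)=x_1$ for all $j$ (in particular $f_x(x_1)=x_1$) and $Df_x(x_1)=1$, $\eta$ is $f_x$-invariant with $\chi_\eta(f_x) = \log|Df_x(x_1)| = 0$, uniformly in $x \in \mathbb{T}^d$.

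For (4), I would prove that $F$ is positively expansive, from which the observation preceding the proposition (via \cite{CR80}) yields the required topological conjugacy to a uniformly expanding dynamic. Suppose $d(F^n(x,y), F^n(x',y')) < \epsilon$ for all $n \geq 0$. Projecting to the base $\mathbb{T}^d$ and using the expansivity of the linear expanding endomorphism $g$, one gets $x=x'$ once $\epsilon$ is smaller than the expansivity constant of $g$. It then remains to show that the fiber cocycle $f_x^n := f_{g^{n-1}(x)}\circ\cdots\circ f_x$ is uniformly expansive: for $\epsilon$ small enough, if $d_{\Sc^1}(f_x^n(y), f_x^n(y')) < \epsilon$ for all $n \geq 0$, then $y = y'$.

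To establish this fiber expansivity I would track the cocycle-images $I_n := f_{g^{n-1}(x)}(I_{n-1})$ of the short arc $I_0$ joining $y$ and $y'$, combining two complementary mechanisms. First, by continuity of $(x,y)\mapsto Df_x(y)$ on the compact $\mathbb{T}^d\times\Sc^1$ and the hypothesis $|Df_x|>1$ off the finite indifferent set $\{x_1,\dots,x_{k-1}\}$, there is a uniform expansion rate $\lambda(\delta)>1$ outside any $\delta$-neighborhood $U_\delta$ of that set, so any iterate $I_n$ having a substantial portion outside $U_\delta$ exceeds $\epsilon$ in finitely many further steps. Second, when $I_n\subset U_\delta$ it is concentrated near some $x_j$, and a single further iterate brings it near $x_1$ (since $f_x(x_j)=x_1$); the indifferent-fixed-point structure at $x_1$ together with $|Df_x|>1$ off $x_1$ forces a discrete ODE-type lower bound $s_{n+1}\geq s_n + c\, s_n^{m+1}$ (with $m\geq 1$ an integer and $c>0$ uniform in $x$) for $s_n := |I_n|$, which guarantees $I_n$ escapes $U_\delta$ in finitely many steps. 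Combining, $|I_n|$ eventually exceeds $\epsilon$, contradicting the hypothesis and forcing $y=y'$.

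The main obstacle is the quantitative control of arc growth while $I_n$ remains close to $x_1$, particularly in the delicate case where the two endpoints of $I_n$ approach $x_1$ from opposite sides: there the leading-order term in the length expansion may cancel, and one must extract higher-order information from $|Df_x(y)|>1$ for $y\neq x_1$. Compactness of $\mathbb{T}^d$ and the $C^r$ regularity of $F$ supply the required uniformity in $x$. Once positive expansivity is established, \cite{CR80} completes the verification of condition (4), and hence of the proposition.
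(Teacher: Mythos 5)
Your verification of conditions (1)--(3) and the reduction of (4) to positive expansivity plus \cite{CR80}, and the further reduction via expansivity of $g$ to a fiber-expansivity statement, all match the paper's proof. The genuine gap is in how you try to establish fiber expansivity, and it is the step you yourself flag.

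The paper does not track the arc $I_n$ through separate regimes near and away from the indifferent points, and in particular does not need any quantitative escape estimate near $x_j$. Instead it establishes a single uniform diameter-expansion bound by compactness: there is an $\epsilon>0$ so that for each $a<\epsilon$ there is a $\lambda_a>1$ with $\operatorname{diam}\bigl(f_x(I)\bigr)\ge\lambda_a\operatorname{diam}(I)$ for every arc $I$ with $a\le\operatorname{diam}(I)\le\epsilon$ and every $x$. This holds because $\operatorname{diam}\bigl(f_x(I)\bigr)=\int_I|Df_x|$ is strictly larger than $\operatorname{diam}(I)$ (since $|Df_x|\ge 1$ with equality only at finitely many points and $I$ has positive length), and the ratio $\operatorname{diam}(f_x(I))/\operatorname{diam}(I)$ depends continuously on $(x,I)$ over the compact set of base points and arcs with diameter in $[a,\epsilon]$, hence attains a minimum $\lambda_a>1$. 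If $y_1\ne y_2$ had fiber orbits staying $\epsilon$-close, the image arcs would grow geometrically with ratio at least $\lambda_a$ where $a=|y_1-y_2|$, eventually exceeding $\epsilon$, a contradiction. The case of an arc straddling $x_j$ causes no trouble here because the integral of $|Df_x|$ over the whole arc is still strictly bigger than the arc length; no leading-order cancellation enters.

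Your proposed route, by contrast, relies on a discrete bound of the form $s_{n+1}\ge s_n+c\,s_n^{m+1}$ near the indifferent points. This would require $|Df_x|-1$ to vanish to a fixed polynomial order at $x_j$, uniformly in $x$, which is not part of the definition of $V(\{x_j\})$: a $C^r$ circle map with $Df_x(x_j)=1$ and $|Df_x|>1$ elsewhere can have $|Df_x|-1$ approaching zero faster than any polynomial as $y\to x_j$, in which case no such bound holds. And the ``straddling $x_1$'' case that you flag as the main obstacle is indeed a real obstruction for that kind of pointwise estimate, and your sketch leaves it unresolved. Replacing the ODE-type escape argument with the compactness argument above both repairs the gap and shortens the proof.
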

\begin{proof}
We observe that $\eta := \delta_{x_{1}}$ satisfies item (3) of the definition of $\mathcal{D}^{r}$. To conclude the proposition, it is enough to check the item (4) of the definition. On the other hand, we already know that it is enough to check that $F$ is expansive.

By definition of $V(\{x_{j}\}_{j = 1}^{k})$ and continuity of $x \mapsto f_{x}$, there exists $\epsilon > 0$ such that: given $a < \epsilon$ there is $\lambda_{a} > 1$ with $diam(f_{x}(I)) \geq \lambda_{a} diam(I)$, for all $a 
\leq diam(I) \leq \epsilon$ and $x \in \Sc^1$, where $diam$ means the diameter of a subset
in $\Sc^1$. Suppose by contradiction that $F$ is not expansive. As $g$ is expansive, then there exists $x \in \mathbb{T}^{d}$ and $y_{1} \neq y_{2}$ with $$|f_{g^{n}(x)}
 \circ \cdots \circ f_{x}(y_{1}) - f_{g^{n}(x)}
 \circ \cdots \circ f_{x}(y_{2})| < \epsilon$$
 for all $n \geq 0$. Therefore 
 $$diam(f_{g^{n}(x)}
 \circ \cdots \circ f_{x}([y_{1} , y_{2}]) \geq \lambda_{a}^{n}|y_{1} - y_{2}|,$$ for all $n \leq 0$. This implies that exist $n \geq 0$ such that $$|f_{g^{n}(x)}
 \circ \cdots \circ f_{x}(y_{1}) - f_{g^{n}(x)}
 \circ \cdots \circ f_{x}(y_{2})| > \epsilon,$$ which is absurd. We conclude that $F$ is expansive.
\end{proof}

\begin{example}
For each $\alpha \in [0 , 1]$ define the constants $$
b=b(\alpha):=\left((\frac{1}{2})^{3+\alpha}-\dfrac{4+\alpha}{4+2\alpha}\Big(\frac{1}{2}\Big)^{2+\alpha}\right)^{-1}, \,\, a=a(\alpha):=\dfrac{-b(4+\alpha)}{4+2\alpha}$$ and the polynomial $g_{\alpha} : [0 , \frac{1}{2}] \rightarrow [0 , 1]$ given by $g_{\alpha}(y)=y+ay^{3+\alpha}+by^{4+\alpha}$. We can then define a family of intermittent maps on the circle: 
\begin{equation}\label{difeo}
 f_{\alpha}(y)=\begin{cases}
 g_{\alpha}(y), \text{ if } 0\leq y \leq 1/2 \\
 1-g_{\alpha}(1-y), \text{ if } 1/2 < y \leq 1.
 \end{cases}
\end{equation}

\begin{figure}[h!]
    \centering
    \includegraphics[scale=5]{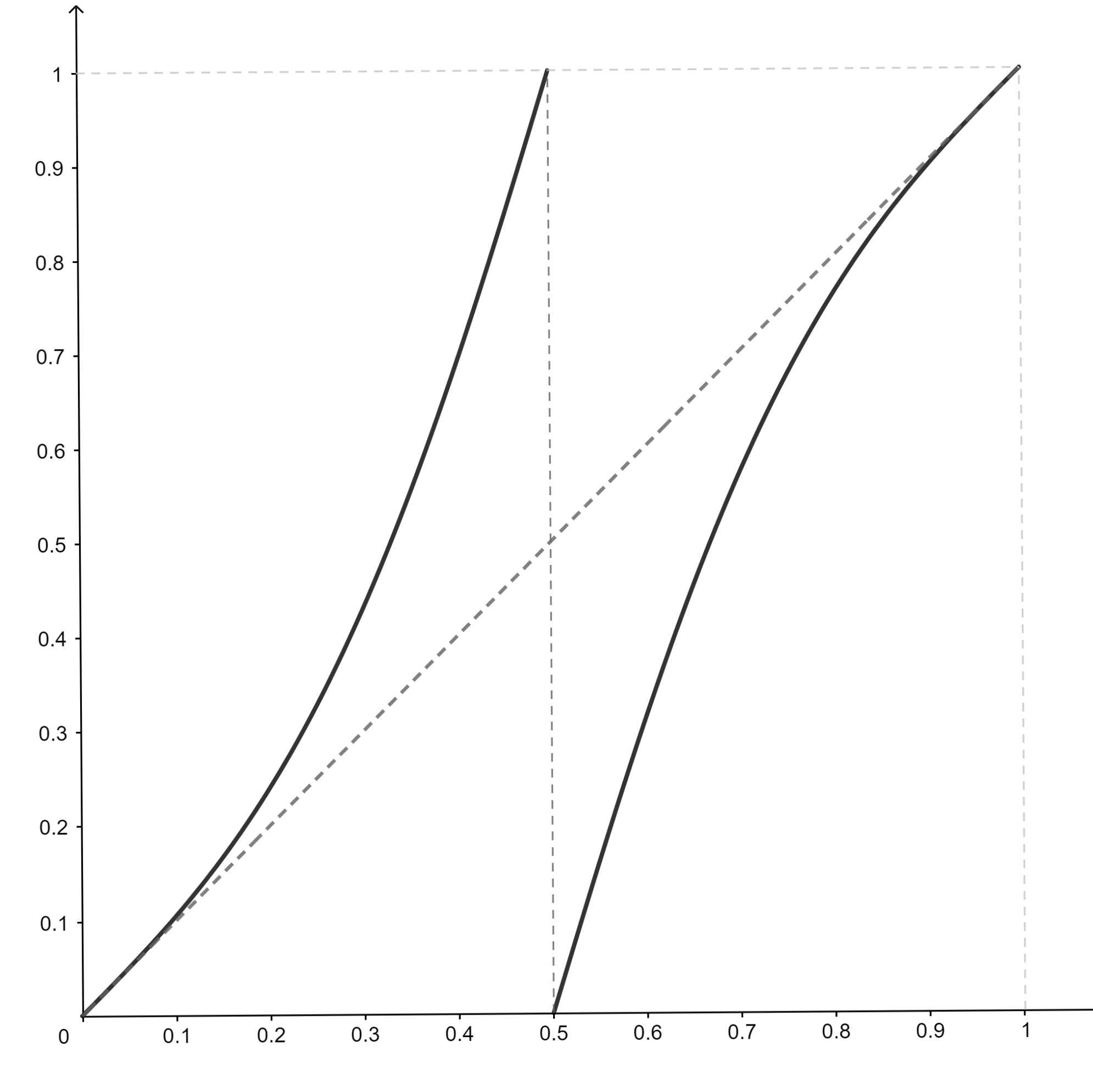}
    \caption{Intermittent map of class $C^2$}
    \label{ManPO.c2}
\end{figure}

This family of dynamics can be seen as a $C^{2}$ version of the Manneville-Pomeau map. Observe that $y = 0$ is an indifferent fixed point and $Df_{\alpha}(y) > 1$ for all $y \neq 0$. In fact, $f_{\alpha} \in V(\{0 , \frac{1}{2}, 1\})$ for all $\alpha \in [0 , 1]$. Therefore, as a byproduct of the previous construction the skew product 
$$F : \mathbb{T}^{d} \times \Sc^1 \xrightarrow[(x , y) \mapsto  (g(x) , f_{x}(y)) ]{} \mathbb{T}^{d} \times \Sc^1,$$ where $g$ is an expanding linear endomorphism, belong to $\mathcal{D}^{2}$.
\end{example}

\subsection{Phase transition}

Our first result ensures effective thermodynamical and spectral phase transition for our toy model. In fact, the phase transition occurs with respect to a geometric potential.
Given $F \in \mathcal{D}^{r}$ we define the potential $\phi^{c}(x , y) := -  \log||\frac{\partial F}{\partial y}(x , y)|| = - \log|Df_{x}(y)|$. Observe that $\phi^{c} = -\log ||DF_{|E^{c}}||$, $\lambda^{c}(z) = -\lim \sum_{i=0}^{n-1}\phi^{c}( F^{i}(z))$ and by Margulis-Ruelle inequality \cite{Rue78} $h_{\mu}\leq h_{top}(g) + \max\{0 ,  \int \log ||DF_{|E^{c}}|| d\mu\}$, for all $F-$invariant probability $\mu$.

\begin{remark}
Given $E$ a complex Banach space and $T : E \rightarrow E$ a bounded linear operator, we say that $T$ has the \emph{(strong) spectral gap property} if there exists a decomposition of its spectrum $sp(T) \subset \C$ as follows: 
$
sp(T) = \{\lambda_{1}\} \cup \Sigma_{1}$ where $\lambda_{1} >0$ is a leading eigenvalue for $T$ with one-dimensional associated eigenspace and there exists $0 < \lambda_{0} < \lambda_{1}$ such that $\Sigma_{1} \subset \{z \in \C : |z| < \lambda_{0}\}$.
\end{remark}

\begin{maintheorem}\label{mainthA}
Let $F \in \mathcal{D}^{r},$ with $r \geq 2$, there exists $t_{0} \in (0 , 1]$ such that:\\

(i) the transfer operator $\mathcal{L}_{F, t\phi^{c}}$ has the spectral gap property on $C^{r-1}$ for all $t < t_{0}$ and has no the spectral gap property for all $t \geq t_{0}$.\\

(ii) the topological pressure function $\R \ni t \mapsto P_{top}(F , t\phi^{c}) $ is analytical, strictly decreasing and strictly convex in $(-\infty , t_{0})$ and constant equal to $h_{top}(g)$ in $[t_{0} , +\infty)$ (see figure \ref{graphic P}).
\end{maintheorem}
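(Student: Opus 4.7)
The plan is to exploit the skew-product structure $F(x,y)=(g(x),f_x(y))$ to reduce the thermodynamic and spectral phase transition for $F$ to the one-dimensional fiber result of Theorem \ref{thC}, combined with standard thermodynamic formalism for the expanding linear base $g$. The starting point is the factorization
$$
\mathcal{L}_{F,\,t\phi^c}\,h(x,y)=\sum_{g(z)=x}\bigl(\mathcal{L}_{f_z,\,-t\log|Df_z|}\,h(z,\cdot)\bigr)(y),
$$
valid for $h\in C^{r-1}(\mathbb{T}^d\times\mathbb{S}^1,\mathbb{C})$, which couples the spectral theory of the base operator $\mathcal{L}_g$ with the family of fiber operators $\mathcal{L}_{f_z,-t\log|Df_z|}$ treated in Theorem \ref{thC}.

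For part (ii), I would first bound $P_{top}(F,t\phi^c)\geq h_{top}(g)$ for every $t\in\mathbb{R}$ by testing the variational principle with $\mu_0:=m_g\otimes\eta$, where $m_g$ is normalized Haar on $\mathbb{T}^d$: hypothesis (3) gives $\int\phi^c\,d\mu_0=0$; Ruelle's inequality applied on the fiber forces $h_\eta(f_x)\leq \max\{0,\chi_\eta(f_x)\}=0$; and Abramov--Rokhlin then yields $h_{\mu_0}(F)=h_{top}(g)$. Next, the Margulis--Ruelle inequality recorded just above Theorem A, combined with non-negativity of central Lyapunov exponents of ergodic $F$-invariant measures (which follows from hypothesis (4) via positive expansivity ruling out central stable manifolds), gives $P_{top}(F,\phi^c)\leq h_{top}(g)$, hence equality at $t=1$. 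Setting $t_0:=\inf\{t: P_{top}(F,t\phi^c)=h_{top}(g)\}$, the strict inequality $P_{top}(F,0)=h_{top}(F)=\log\deg(F)>\log\deg(g)=h_{top}(g)$ forces $t_0>0$, while the previous bound gives $t_0\leq 1$. Convexity of $t\mapsto P_{top}(F,t\phi^c)$, together with the uniform lower bound and attainment at $t=1$, forces the pressure to be constantly $h_{top}(g)$ on $[t_0,+\infty)$; the analyticity, strict convexity and strict decrease on $(-\infty,t_0)$ then come from the spectral gap of part (i) via standard analytic perturbation of the leading eigenvalue.

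For part (i) with $t<t_0$, I would transfer the fiberwise spectral gap of Theorem \ref{thC} to $\mathcal{L}_{F,t\phi^c}$ on $C^{r-1}(\mathbb{T}^d\times\mathbb{S}^1,\mathbb{C})$ through the factorization above, combining the uniform contraction of $\mathcal{L}_g$ on the base with the fiberwise gap into a Lasota--Yorke inequality in a suitable split norm; simplicity and isolation of the leading eigenvalue $e^{P_{top}(F,t\phi^c)}$ then follow from topological mixing of $F$ (implied by transitivity of the conjugate expanding model on the connected manifold $\mathbb{T}^d\times\mathbb{S}^1$). For $t\geq t_0$, a spectral gap would imply, by Kato perturbation theory, real-analyticity of $s\mapsto P_{top}(F,s\phi^c)$ on a complex neighborhood of $t$; this rules out any $t$ in a two-sided neighborhood of $t_0$, where the pressure is manifestly non-analytic, and for $t$ strictly larger it conflicts with the coexistence of equilibrium states: the measure $\mu_0$ realizes $h_{top}(g)$ while any $C^{r-1}$-eigenstate of a gap transfer operator would produce a smooth equilibrium distinct from $\mu_0$ (which is typically singular in the central direction), violating uniqueness.

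The main technical obstacle is the Lasota--Yorke estimate for $\mathcal{L}_{F,t\phi^c}$ on $C^{r-1}$ in the regime $t<t_0$: different fibers $f_x$ may individually possess distinct phase-transition thresholds, so one cannot simply tensor a uniform fiberwise spectral gap. The resolution adapts the strategy of \cite{BC21} to the skew-product setting, constructing an inducing scheme (or Young tower) that homogenizes the fiberwise non-uniformities using the uniform expansion of $g$, from which a uniform Lasota--Yorke estimate valid throughout $t<t_0$ can be extracted.
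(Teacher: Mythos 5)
Your treatment of part (ii) up to the identification of $t_0$ follows the paper's scheme (the paper bounds $h_{Leb\times\eta}(F)\geq h_{top}(g)$ directly and then uses Margulis--Ruelle for the reverse bound at $t=1$; your Abramov--Rokhlin computation of $h_{\mu_0}(F)$ is a slightly different route to the same inequality, and in fact you only need $\geq$). The real divergence, and the real gap, is in part (i).

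For the absence of spectral gap on $[t_0,+\infty)$, your argument does not close. You write that spectral gap at $t$ would imply analyticity of $s\mapsto P_{top}(F,s\phi^c)$ near $t$, which ``rules out any $t$ in a two-sided neighborhood of $t_0$, where the pressure is manifestly non-analytic.'' But on $(t_0,+\infty)$ the pressure is \emph{constant}, hence perfectly analytic there; non-analyticity occurs only at the single point $t_0$. So analyticity gives you nothing for $t>t_0$. Your fallback --- that a $C^{r-1}$-eigenstate of a gap transfer operator would produce an equilibrium ``distinct from $\mu_0$'' and hence violate uniqueness --- is both unjustified (why should the gap equilibrium be distinct from $Leb\times\eta$? and why would two equilibria be a contradiction, given that for $t>t_0$ one expects non-uniqueness anyway?) and not what one wants: uniqueness is not the obstruction. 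The paper's Proposition~\ref{propat} instead uses Nagaev's method: spectral gap at $s$ gives an analytic perturbation of the leading eigenvalue, identifies $P''(s)$ with the CLT variance $\sigma^2$, and shows $\sigma^2=0$ would force $\phi^c$ to be a coboundary, making the central exponent identically $0$ and $f_0$ invertible, contradicting $\deg F>\deg g$. Hence spectral gap at $s$ forces $P''(s)>0$, i.e.\ strict convexity; since $P$ is constant on $[t_0,\infty)$, spectral gap is excluded there. This is the argument you are missing.

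For the spectral gap when $t<t_0$, your strategy --- factor $\mathcal{L}_{F,t\phi^c}$ through the fiber operators $\mathcal{L}_{f_z,-t\log|Df_z|}$ and assemble a Lasota--Yorke estimate from fiberwise gaps --- is not the paper's, and as you yourself note it runs into the problem that the fiberwise thresholds from Theorem~\ref{thC} vary with $z$ and need not dominate the global $t_0$. Your proposed fix (an inducing scheme / Young tower that ``homogenizes'' the fiber non-uniformities) is asserted, not constructed, and it is far from clear it yields a gap on the \emph{full} interval $(-\infty,t_0)$ rather than on some smaller parameter range. The paper avoids fiber decompositions altogether: it invokes the Campbell--Latushkin bound (Theorem~\ref{Lat}) to get
$$\rho_{ess}\bigl(\mathcal{L}_{F,t\phi^c}|_{C^k}\bigr)\leq \exp\Bigl[\sup_{\mu}\bigl\{h_\mu(F)+t\textstyle\int\phi^c\,d\mu-k\lambda_{\min}(F,\mu)\bigr\}\Bigr],$$
shows (Lemma~\ref{essest}) that the right-hand side is $<e^{P(t)}$ for $t<t_0$ using $\lambda^c(\mu)\geq 0$ and strict decrease of $P$ on $(-\infty,t_0)$ (Lemma~\ref{L7}), hence quasi-compactness, and then upgrades quasi-compactness to spectral gap via the dense-preimage property (Proposition~\ref{LemaEss}). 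A connectedness argument with $\tau_1,\tau_2$ extends the gap from $t\leq 0$ to all $t<t_0$. If you want your fiber route to work, the burden is to produce the claimed uniform Lasota--Yorke estimate on all of $(-\infty,t_0)$; as written this is a genuine gap.
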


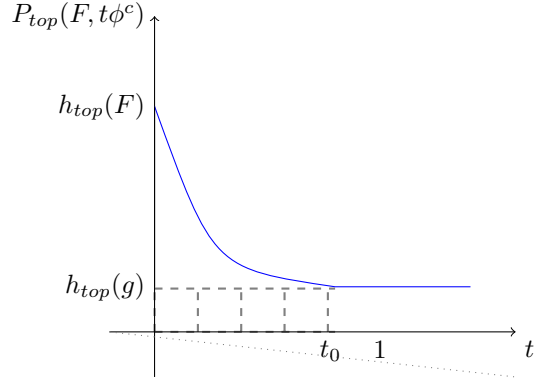
\begin{figure}\label{graphic P}
\begin{tikzpicture}[scale=0.6,domain=0:3]
\draw[thick,color=gray,step= .96 cm,dashed] (0,1) grid (4,0);
\draw[->] (-1,0) -- (8,0) node[below right] {$t$};
\draw[->] (0,-1) -- (0,7) node[left] {$P_{top}(F , t\phi^{c})$};

 \draw[dotted,gray] (-1,0) -- (8,-1);
  \draw[blue] (0,5) .. controls (1.3,1.4) .. (4,1) -- (7,1);

\draw (0,5) node[left] {$h_{top}(F)$} -- (0,1) node[left] {$h_{top}(g)$};
\draw (3.9,0) node[below] {$t_0$};
\draw (5,0) node[below] {$1$};
\end{tikzpicture}
\caption{Graph of the topological pressure function}
\end{figure}

The proof of the previous theorem is analogous to the respective results of \cite{BC21}, with the additional challenge of being in high dimension.

\subsection{Multifractal Analysis}

Given $F \in \mathcal{D}^{r}$, as $F$ is topologically conjugated to a uniformly expanding dynamic and $\mathbb{T}^{d}\times \Sc^{1}$ is connected then $F$ has the periodic specification property. In particular, given a continuous observable $\psi : \mathbb{T}^{d}\times \Sc^{1} \rightarrow \R$ its Birkhoff spectrum $$S_{\psi} := \{\alpha \in \R : \exists x \in \mathbb{T}^{d}\times \Sc^{1} \text{ with }
\lim_{n \to \infty}\frac{1}{n}\sum_{i = 0}^{n-1}\psi(F^{i}(x)) = \alpha\}$$ is a non-empty bounded interval; furthermore $$S_{\psi} = \Big\{\int \psi d\mu : \mu \text{ is an } F- \text{invariant probability}\Big\}$$ (for more details see e.g. \cite{T09}).

In our context, we are interested in studying the fractal sets created from central Lyapunov exponents, that is, taking the observable $\psi = \log \big|\frac{\partial F}{\partial y}\big|$,
$$
L^{c}_{a , b} =\{w\in \mathbb{T}^{d} \times \Sc^{1}; \lim\frac{1}{n}\log \Big|\frac{\partial F^{n}(w)}{\partial y}\Big| \in [a , b]\} \text{ and }$$
$$
E_{a,b} := \Big\{w\in \mathbb{T}^{d} \times \Sc^{1} :  \nexists \lim\frac{1}{n}\log \Big|\frac{\partial F^{n}(w)}{\partial y}\Big| \text{ and } 
$$
$$\limsup_{n\to\infty} \frac{1}{n}\log \Big|\frac{\partial F^{n}(w)}{\partial y}\Big| \in  [a,b] \text{ or } \liminf_{n\to\infty} \frac{1}{n}\log \Big|\frac{\partial F^{n}(w)}{\partial y}\Big| \in  [a,b]\Big \}
$$

 In that case, the Birkhoff spectra of $\psi$ is the set of all central Lyapunov exponents, the central Lyapunov spectrum denoted by $L^{c}(F)$. So it makes sense to consider only intervals contained within the Birkhoff spectra.
 
 Thus denote $\Delta:=\{(a,b) \in L^{c}(F) \times L^{c}(F) : a \leq b \}$, denote $\lambda^{c}_{min}$ and $\lambda^{c}_{max}$ as the infimum and supremum for the central Lyapunov exponents of the equilibrium states obtained from spectral gap (see Section \ref{secLDP} for an appropriate definition), and $\lambda^{c}_{\mu_{0}}$ the central Lyapunov exponent of the measure of maximum entropy of $F$.

  With this notation, we have the following results for the entropy spectra for the central Lyapunov exponents:

\begin{maintheorem}\label{mainthC}
The entropy function $\Delta \ni (a,b) \mapsto h_{top}(L^{c}_{a,b}) = h_{top}(E_{a,b})$ is a concave $C^1$ function satisfying the following:

\begin{itemize}
    \item It is constant and equal to its maximum value $h_{top}(f)$ for $(a,b)$ on the rectangle $[0,\lambda^{c}_{\mu_{0}}]\times[\lambda^{c}_{\mu_{0}},\lambda^{c}_{max}]$;
    \item It is strictly concave and analytic everywhere, except maybe for $b \leq \lambda^{c}_{min}$ in case the exponent $\lambda^{c}_{min}>0$, where the function is linear.
\end{itemize}
\end{maintheorem}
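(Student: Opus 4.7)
Set $\varphi := -\phi^c = \log|DF_{|E^c}|$, so that $\lambda^c(x) = \lim \frac{1}{n}\sum_{i=0}^{n-1}\varphi(F^i(x))$, and let $P(t) := P_{top}(F, t\phi^c)$. Theorem~\ref{mainthA} completely describes $P$: it is analytic, strictly decreasing and strictly convex on $(-\infty, t_0)$, and constant equal to $h_{top}(g)$ on $[t_0, +\infty)$; for every $t < t_0$ the spectral gap produces a unique equilibrium state $\mu_t$ with $\lambda^c(\mu_t) = -P'(t)$. Hence $t \mapsto \lambda^c(\mu_t)$ is an analytic strictly decreasing bijection from $(-\infty, t_0)$ onto $(\lambda^c_{min}, \lambda^c_{max})$, taking the value $\lambda^c_{\mu_0}$ at $t = 0$. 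The plan is to combine this description of $P$ with the multifractal formalism and specification-based arguments to compute $(a,b) \mapsto h_{top}(L^c_{a,b})$ explicitly and then read off its regularity.

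The first technical step is the pointwise spectrum
$$
h(\alpha) := h_{top}(L^c(\alpha)) = \inf_{t \in \R}\{P(t) + t\alpha\}, \qquad \alpha \in L^c(F).
$$
The inequality $\leq$ is the standard variational bound $h_\mu(F) \leq P(t) + t\lambda^c(\mu)$. For the reverse inequality I invoke the periodic specification property of $F$, granted by item~(4) of the definition of $\mathcal{D}^r$: when $\alpha \in (\lambda^c_{min}, \lambda^c_{max})$ it is attained by the equilibrium state $\mu_{t_\alpha}$ with $-P'(t_\alpha) = \alpha$, and the saturation principle of Takens--Verbitskiy/Pfister--Sullivan/Thompson (see \cite{T,LV21}) transfers its entropy to the generic-point level set $L^c(\alpha)$, and by union to $L^c_{a,b}$. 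The identity $h_{top}(L^c_{a,b}) = h_{top}(E_{a,b})$ is then proved by two matching bounds: a lower bound via a specification-based concatenation alternating generic orbits of averages $a$ and $b$ to produce points inside $E_{a,b}$ with the full entropy $\sup_{\alpha \in [a,b]} h(\alpha)$, and an upper bound by a Bowen-type covering argument using that every point of $E_{a,b}$ has a liminf or limsup Birkhoff average lying in $[a,b]$.

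With these reductions I evaluate $h(\alpha)$ using the explicit shape of $P$. A case analysis on the sign of $P'(t) + \alpha$ gives two regimes. For $\alpha \in [0, \lambda^c_{min}]$ — a non-degenerate range only when $\lambda^c_{min} > 0$ — the map $t \mapsto P(t) + t\alpha$ is strictly decreasing on $(-\infty, t_0)$ and non-decreasing on $[t_0, +\infty)$, so $h(\alpha) = h_{top}(g) + t_0 \alpha$, the linear branch. For $\alpha \in [\lambda^c_{min}, \lambda^c_{max}]$ the infimum is attained at the interior critical point $t_\alpha$ with $-P'(t_\alpha) = \alpha$; the envelope theorem gives $h(\alpha) = P(t_\alpha) + t_\alpha \alpha$ and $h'(\alpha) = t_\alpha$, and since $t \mapsto -P'(t)$ is an analytic strictly increasing bijection onto $(\lambda^c_{min}, \lambda^c_{max})$, the function $h$ is strictly concave and analytic there. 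The two branches glue $C^1$ at $\alpha = \lambda^c_{min}$ with common slope $t_0$, and $h$ attains its maximum $h_{top}(F) = P(0)$ at $\alpha = \lambda^c_{\mu_0}$ where $h'(\lambda^c_{\mu_0}) = 0$.

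Finally I pass from $h$ to the two-variable spectrum. Since $h$ is unimodal with peak at $\lambda^c_{\mu_0}$, the identity $h_{top}(L^c_{a,b}) = \max_{\alpha \in [a,b]} h(\alpha)$ reads $h_{top}(F)$ when $\lambda^c_{\mu_0} \in [a,b]$ (i.e.\ on the rectangle $[0, \lambda^c_{\mu_0}] \times [\lambda^c_{\mu_0}, \lambda^c_{max}]$), $h(b)$ when $b < \lambda^c_{\mu_0}$, and $h(a)$ when $a > \lambda^c_{\mu_0}$. Equivalently it equals $\min(h_+(b), h_-(a))$ with $h_+(b) := h(\min(b, \lambda^c_{\mu_0}))$ and $h_-(a) := h(\max(a, \lambda^c_{\mu_0}))$, each of which is concave, $C^1$, and glued smoothly at $\lambda^c_{\mu_0}$ because $h'(\lambda^c_{\mu_0}) = 0$; the two factors meet tangentially along the crease $\{b = \lambda^c_{\mu_0}\} \cup \{a = \lambda^c_{\mu_0}\}$, so their minimum is jointly $C^1$ and concave, strictly concave and analytic off the flat rectangle and off the linear branch $b \leq \lambda^c_{min}$, exactly as claimed. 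The hard step, in my view, is the matched pair of bounds for $E_{a,b}$: the lower bound requires an iterative concatenation of generic orbits with prescribed asymptotic averages while carefully calibrating block lengths to preserve $(n,\varepsilon)$-separated cardinalities, and the upper bound needs an entropy-cover argument adapted to the oscillatory Birkhoff condition; both rest on the periodic specification of $F$ and on the upper semicontinuity of $\mu \mapsto h_\mu(F)$ inherited from expansivity.
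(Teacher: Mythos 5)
Your plan follows essentially the same route as the paper: reduce the two--variable spectrum to the single--variable one, identify the single--variable spectrum with a Legendre transform of the pressure $P$, and then read off regularity from the explicit shape of $P$ supplied by Theorem~\ref{mainthA}. The packaging differs: the paper first derives a large deviations principle for $\mu_0$ (Gartner--Ellis via spectral gap, then Young's thermodynamic LDP via the Gibbs property), identifies the rate function $I$ by uniqueness of rate functions and the contraction principle, and then observes $I(s)=h_{top}(F)-\sup\{h_\nu(F): \lambda^c(\nu)=s\}$; you bypass the LDP and work directly with $h(\alpha)=\inf_t\{P(t)+t\alpha\}$. These are the same computation up to the shift $I(s)=h_{top}(F)-h(s)$, and the paper's detour through the LDP is mostly because Propositions~\ref{thm:LDP.Ellis} and \ref{LDP:thermodynamical} are of independent interest and feed directly into the Bomfim--Varandas upper bound used in Lemma~\ref{lemX}. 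Your $E_{a,b}$ argument (concatenation for the lower bound, Bowen/BV upper bound) is exactly what the paper does in Lemma~\ref{lemX} following Thompson.

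The one place your sketch has a genuine gap is the lower bound on the linear branch. You establish the reverse inequality $h(\alpha)\ge \inf_t\{P(t)+t\alpha\}$ only for $\alpha\in(\lambda^c_{\min},\lambda^c_{\max})$, where the critical point $t_\alpha<t_0$ exists and $\mu_{t_\alpha}$ supplies the maximising measure. For $\alpha\in[0,\lambda^c_{\min}]$ with $\lambda^c_{\min}>0$ the infimum sits at $t=t_0$, where there is no spectral gap and no equilibrium state is produced by Theorem~\ref{mainthA}, so the variational bound alone gives only $h(\alpha)\le h_{top}(g)+t_0\alpha$. One must still exhibit a measure $\nu$ with $\lambda^c(\nu)=\alpha$ and $h_\nu(F)=h_{top}(g)+t_0\alpha$. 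The paper does this explicitly (Proposition~5.8(i)): take a weak$^*$ accumulation point $\mu_{t_0}$ of $\mu_{t\phi^c}$ as $t\uparrow t_0$ (which by upper semicontinuity of entropy and continuity of $P$ satisfies $\lambda^c(\mu_{t_0})=\lambda^c_{\min}$, $h_{\mu_{t_0}}(F)=h_{top}(g)+t_0\lambda^c_{\min}$), and interpolate affinely with $\operatorname{Leb}\times\eta$ (which has $\lambda^c=0$ and entropy $h_{top}(g)$). Alternatively one can close the gap abstractly via the bi--conjugacy of $P$ with the upper--semicontinuous concave function $\alpha\mapsto\sup\{h_\nu:\lambda^c(\nu)=\alpha\}$, but that is not what your plan says; as written, the claim that $h$ is linear on $[0,\lambda^c_{\min}]$ with the advertised value is unsupported.
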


The figure below gives the expected shape for the function $(a,b)\mapsto h_{{L^{c}_{a,b}}(F)}$, for the case where $\lambda_{min}=0$, for simplicity.

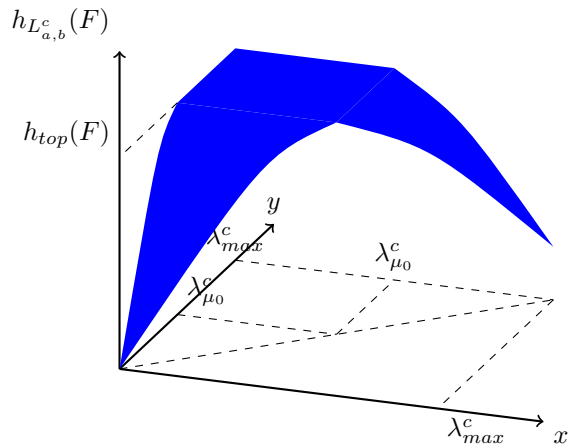
\begin{figure}

\tdplotsetmaincoords{70}{110}
\begin{tikzpicture}[scale=1.5,tdplot_main_coords]
\draw[thick,->] (0,0,0) -- (-4,0,0)  node[anchor=south ]{$y$};
\draw[thick,->] (0,0,0) -- (0,4,0) node[anchor=north west]{$x$};
\draw[thick,->] (0,0,0) -- (0,0,3) node[anchor=south east]{$h_{L_{a,b}^{c}}(F)$};
\tdplotsetcoord{P}{.8}{50}{70}
\draw[dashed,color=black] (0,0,0) -- (-3,3,0) ;
\draw[dashed, color=black] (-3,0,0) node[anchor=south ]{$\lambda_{max}^c$} -- (-3,3,0);
\draw[dashed, color=black] (-1.5,0,0)node[anchor=south west]{$\lambda_{\mu_0}^c$} -- (-1.5,1.5,0);
\draw[dashed,color=black] (-1.5,1.5,0) -- (-3,1.5,0)node[anchor=south]{$\lambda_{\mu_0}^c$};

\draw[dashed, color=black] (-3,3,0) -- (0,3,0) node[anchor=north west]{$\lambda_{max}^{c}$};
\fill[dashed, color=blue] (-1.5,0,2) -- (-1.5,1.5,2) -- (-3,1.5,2) -- (-3,0,2);
\fill[dashed, color=blue] (0,0,0) .. controls (-1,0,1.8) .. (-1.5,0,2) -- (-1.5,1.5,2) .. controls (-1,1,1.8) .. (0,0,0);
\fill[dashed,color=blue]  (-3,3,0.5).. controls (-1.2,2.5,2).. (-1.5,1.5,2) -- (-3,1.5,2) .. controls (-2,2.5,2) .. (-3,3,0.5);

\draw[dashed,color=black] (-1.5,0,2) -- (0,0,2)node[anchor=south east]{$h_{top}(F)$};

\end{tikzpicture}

\caption{Entropy of level sets $L_{a,b}^{c}$}
\end{figure}

The previous theorem is a consequence of the thermodynamic and spectral phase transitions obtained in the Theorem \ref{mainthA}. Therefore analogous theorem will be satisfied for local diffeomorphisms in the circle, taking into account the respective thermodynamical and spectral phase transitions proven in \cite{BC21}. 
Thus we improve the description multifractal of the entropy and Hausdorff spectra for the Lyapunov exponents obtained by Hofbauer \cite{Ho10}.

\begin{maincorollary}\label{mainthD}
Let $f:\Sc^1 \to \Sc^1$ be a transitive non invertible $C^{1}-$local diffeomorphism with $Df$ Holder continuous. Then:

(i) The entropy function $\Delta \ni (a,b) \mapsto h_{L_{a,b}}(f)$ is a concave $C^1$ function satisfying the following:

\begin{itemize}
    \item It is constant and equal to its maximum value $h_{top}(f)$ for $(a,b)$ on the rectangle $[0,\lambda_{\mu_{0}}]\times[\lambda_{\mu_{0}},\lambda_{\max}]$;
    \item It is strictly concave and analytic everywhere, except maybe for $b \leq \lambda_{min}$, in case the exponent $\lambda_{min}>0$, where the function is linear.
\end{itemize}

 (ii) The Hausdorff dimension function $\Delta \ni (a,b) \mapsto HD(L_{a,b})$ is a concave $C^1$ function, which does not depend on $b$, satisfying:
\begin{itemize}
    \item It is constant and equal to its maximum value $t_0$ for $a\leq \lambda_{min}$;
    \item It is strictly concave, decreasing and analytic for $a > \lambda_{min}$.
\end{itemize}
   Note that if $\lambda_{min}>0$ this function is constant on $[0,\lambda_{min})$, and non-analytic for $a=\lambda_{min}$, whereas if $\lambda_{min}=0$ this function is analytic.
\end{maincorollary}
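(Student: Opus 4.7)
The strategy is a one-dimensional adaptation of the proof of Theorem~\ref{mainthC}: combine the classical Bowen/Legendre multifractal formalism with the phase-transition shape of $P(t):=P_{\topp}(f,-t\log|Df|)$ recalled in the preceding corollary from \cite{BC21}. First, I would establish the variational representation
\begin{equation*}
h_{L_\alpha}(f)=\inf_{t\in\R}\{P(t)+t\alpha\}
\end{equation*}
for level sets $L_\alpha=\{x\in\Sc^1:\lim\tfrac{1}{n}\log|Df^n(x)|=\alpha\}$, with $\alpha$ in the Lyapunov spectrum. The upper bound is the standard covering argument with Bowen balls; the lower bound is realised, for each interior $\alpha\in(\lambda_{\min},\lambda_{\max})$, by the Gibbs-type equilibrium state $\mu_t$ produced by the spectral gap of $\mathcal{L}_{f,-t\log|Df|}$ available in the range $t<t_0$ by Theorem~\ref{thC}, with $t=t^*(\alpha)$ determined by $-P'(t^*)=\alpha$. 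Because $f$ is transitive and topologically conjugate to an expanding dynamics, it has the specification/gluing-orbit property, so this formula also extends to the historic set $E_{a,b}$ via results of Thompson and Lima--Varandas, yielding $h_{L_{a,b}}(f)=h_{\topp}(E_{a,b})$.

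Next, I would read off the one-variable spectrum $\alpha\mapsto h_{L_\alpha}(f)$ from the shape of $P$. On the analytic strictly convex branch $(-\infty,t_0)$ the implicit function theorem gives a unique analytic minimiser $t^*(\alpha)$ for $\alpha\in(\lambda_{\min},\lambda_{\max})$, whence $h_{L_\alpha}(f)=P(t^*)+t^*\alpha$ is analytic, strictly concave, with envelope derivative $\frac{d}{d\alpha}h_{L_\alpha}(f)=t^*(\alpha)$. When $\lambda_{\min}>0$, the continuation to $\alpha\in(0,\lambda_{\min}]$ is dictated by the flat branch $P\equiv 0$ on $[t_0,\infty)$: since $P(t)+t\alpha$ is decreasing on $(-\infty,t_0)$ for such $\alpha$ and equals $t\alpha$ afterwards, the infimum is attained at $t=t_0$, giving the linear piece $h_{L_\alpha}(f)=t_0\alpha$. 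The $C^1$ matching at $\lambda_{\min}$ is automatic because $t^*(\alpha)\to t_0$ as $\alpha\to\lambda_{\min}^+$. The two-parameter statement then follows by writing $h_{L_{a,b}}(f)=\sup_{\alpha\in[a,b]}h_{L_\alpha}(f)$ and using that the one-variable function is concave with maximum $h_{\topp}(f)$ attained at $\alpha=\lambda_{\mu_0}$ (the Lyapunov exponent of the measure of maximal entropy, which is the equilibrium state at $t=0$): the supremum is $h_{\topp}(f)$ when $\lambda_{\mu_0}\in[a,b]$, $h_{L_a}(f)$ when $a>\lambda_{\mu_0}$, and $h_{L_b}(f)$ when $b<\lambda_{\mu_0}$, from which all claimed regularity is inherited.

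For part (ii), the one-dimensionality of $f$ makes $|Df|$ the pointwise conformal expansion rate, which yields the Young--Bowen formula
\begin{equation*}
HD(L_\alpha)=\frac{h_{L_\alpha}(f)}{\alpha}\qquad(\alpha>0),
\end{equation*}
proved by the two-sided covering argument: the upper bound via Bowen balls of radius comparable to $e^{-n\alpha}$, and the lower bound via the local dimension $h_{\mu_t}(f)/\alpha$ of the Gibbs state $\mu_t$ with $-P'(t)=\alpha$. Differentiating $D(\alpha):=h_{L_\alpha}(f)/\alpha$ using the envelope identity gives $D'(\alpha)=-P(t^*(\alpha))/\alpha^2$, which is strictly negative on $(\lambda_{\min},\lambda_{\max})$ since $P(t^*)>0$ for all $t^*<t_0$. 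Thus $D$ is strictly decreasing and analytic there, identically equal to $t_0$ on $(0,\lambda_{\min}]$, and the $C^1$ matching at $\lambda_{\min}$ is automatic since $P(t_0)=0$ forces $D'(\lambda_{\min}^+)=0$; strict concavity of $D$ on $(\lambda_{\min},\lambda_{\max})$ follows from a second differentiation that uses strict convexity of $P$. Monotonicity of $D$ finally implies $HD(L_{a,b})=\sup_{\alpha\in[a,b]}D(\alpha)=D(a)$, independent of $b$, which transfers the stated properties from $D$ to the two-variable function.

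I expect the main obstacle to be verifying the Young--Bowen formula and the specification-based variational equalities uniformly in $\alpha$, including values close to $\lambda_{\min}=0$ where $f$ behaves in a parabolic-like fashion and Bowen balls degenerate. The remedy, already the core technical device underlying \cite{BC21}, is to exploit the topological conjugacy of $f$ with a uniformly expanding map: the covering estimates are carried out for the expanding model and the resulting dimensional and entropic bounds are transported back through the H\"older regularity of the conjugacy provided by the spectral gap regime $t<t_0$.
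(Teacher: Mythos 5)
Your outline of part (i) is essentially the paper's: the paper also reduces $(a,b)\mapsto h_{L_{a,b}}(f)$ to a one-variable Legendre spectrum $\hat\tau(\alpha)=\inf_t\{P(t)+t\alpha\}$, obtained through specification and large-deviation machinery (Thompson's variational principle and the results of Bomfim--Varandas), and then maximises over $[a,b]$; this is sound. The divergence is in part (ii). There the paper does not attempt to prove the Bowen-type formula $HD(L_\alpha)=h_{L_\alpha}(f)/\alpha$ from scratch: it imports Hofbauer's theorem for piecewise monotone interval maps, which gives $HD(M_{u,v})=\max_{c\in[u,v]}\check\tau(c)$ with $\check\tau(c)=\hat\tau(c)/c$, and then identifies $HD(L_{a,b})$ with $HD(M_{a,b})$ by a sandwich argument ($L_{a,b}\subset M_{a,b}$ and the maximum of $\check\tau$ on $[a,b]$ is realised on a single-level set $L_{c,c}=M_{c,c}$). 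Hofbauer's proof is an induced-map/Markov-tower argument precisely because the map is not uniformly expanding; it is not a routine Bowen-ball covering.

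This is where your proposal has a genuine gap. You propose to establish $HD(L_\alpha)=h_{L_\alpha}(f)/\alpha$ by a two-sided covering argument and, to handle the parabolic behaviour near $\lambda_{\min}=0$, to ``carry out the covering estimates for the expanding model and transport them back through the H\"older regularity of the conjugacy''. That transport does not work for Hausdorff dimension: a H\"older conjugacy changes Hausdorff dimension (it only gives two-sided power bounds), and, more fundamentally, it does not preserve the level sets $L_\alpha$ at all, since Lyapunov exponents are not conjugacy invariants -- the expanding model has $\lambda_{\min}>0$ while $f$ may have $\lambda_{\min}=0$, so $h(L_\alpha^f)$ is not $L_\alpha^g$ for the conjugate $g$. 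The conjugacy is only used in the paper to obtain topological properties of $f$ (expansivity, specification, generating partition); the dimensional estimates must be done for $f$ itself, which is exactly what Hofbauer's theorem supplies and what your outline does not replace. Separately, the step ``strict concavity of $D$ on $(\lambda_{\min},\lambda_{\max})$ follows from a second differentiation that uses strict convexity of $P$'' is not justified as stated: with the envelope identities $\hat\tau-\alpha\hat\tau'=P(t^*)$ and $\hat\tau''=-1/P''(t^*)$ one gets $D''(\alpha)=\bigl[-\alpha^2/P''(t^*)+2P(t^*)\bigr]/\alpha^3$, whose sign is that of $ (P')^2-2PP''$ and is not controlled by convexity of $P$ alone; concavity of $\hat\tau$ with $\hat\tau(0)=0$ likewise does not force concavity of $\hat\tau(\alpha)/\alpha$. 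This point deserves a genuine argument rather than an appeal to ``a second differentiation''.
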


The figures below give the expected shape for the function $(a,b) \mapsto HD(L_{a,b})$ in both cases.

\begin{figure}[h]%
    \centering
    {{\includegraphics[width=7cm]{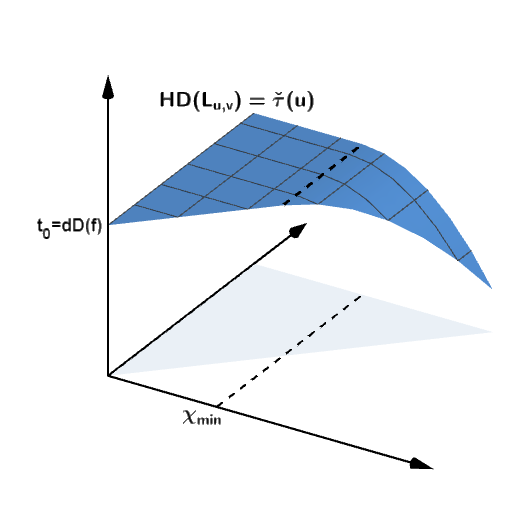} }}%
    \end{figure}
    \begin{figure}[h]%
    \centering
    {{\includegraphics[width=7cm]{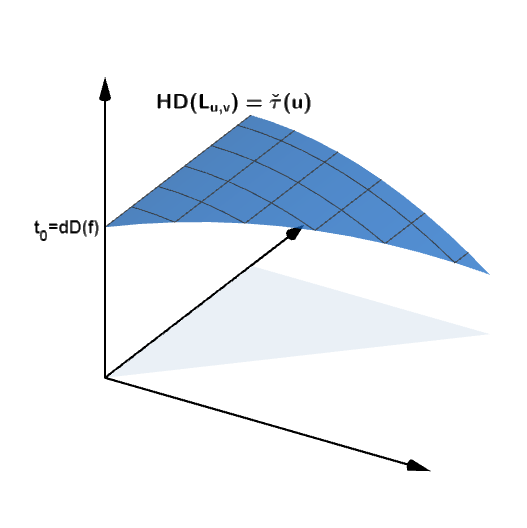} }}%
    \caption{Hausdorff dimension of level sets}%
    \label{fig:example2}%
    
\end{figure}

The proof of item (ii) of the previous Corollary also uses the multifractal description  obtained by Hofbauer \cite{Ho10}. It's possible to give a shorter proof of item (i) of the previous corollaries using the work of Hofbauer directly \cite{Ho10}.

For the circle case, we try to understand under what circumstances $\lambda_{min}>0$ or $\lambda_{min}=0$. For maps which are expanding except on a finite number of \textit{indifferent fixed points} this is determined by the asymptotic expansion near the fixed points: if the map $f$ has strictly $C^{1+p}$ expansion, then the pressure is not differentiable at $t_0=1$ and $\lambda_{min}>0$; if the map $f$ has a $C^2$ expansion, then the pressure is differentiable at $t_0=1$ and $\lambda_{min}=0$ (see Section \ref{difparam}).


\section{Preliminary}\label{prelim}

In this section, we provide some definitions and preliminary results needed to prove the main results. 

\subsection{Topological Dynamics}

We state some classical definitions of topological dynamics that will be useful to us (for more details, see e.g. \cite{OV16}).

Regarding the distances of orbits one concept is essential.

\begin{definition}
The dynamic $f: M \rightarrow M$ is called \textbf{uniformly expanding} if $f$ is an open map and there are constants $\sigma > 1$, $r>0$ and $n \geq 0$ such that 
$$d(f^{n}(x),f^{n}(y))\geq \sigma d(x,y) \text{ for all } d(x,y) < r. $$
\end{definition}

If $M$ is a Riemannian manifold and $f:M \to M$ is a $C^1$ map then $f$ is uniformly expanding if, only if, there exist constants $C,\lambda > 0$ such that $$\|Df^n_x(v)\| \geq Ce^{\lambda n}\|v\|$$
for all $x \in M$, all $v \in T_x M$ and all $n\geq 1$.

If $f: M \rightarrow M$ is an expanding dynamics defined on a connected domain then it is topologically exact, that is, for every open set $A \subset M$ exists $n \in \N$ such that $f^{n}(A) = M$. In particular we have that every set of pre-orbits $\bigcup_{n\geq 0}f^{-n}\{x\}$ is dense on $M$ for every $x\in X$, this property is called of strong transitivity and will be important later.

 Remember that two maps $f: X \to X$ and $g: Y \to Y$ are \textbf{conjugate} if there exists a homeomorphism $h:X\to Y$ such that $g\circ h=h \circ f$.
Topological exactness and dense pre-orbits are in fact \textbf{topologically invariant}, that is, if a map is conjugate to another map with either of these properties then it also does. However, a conjugate of an expanding map need not be expanding as well. 

\

\subsection{Ergodic theory}
Now we state some classical definitions,
 notations and results from Ergodic Theory (for more details see e.g. \cite{OV16}).
 
Given a dynamics $f : M \rightarrow M$, on a compact metric space, we will denote the space of $f-$invariants probabilities by $\mathcal{M}_{1}(f)$ and  the space of $f-$invariants probabilities that are $f-$ergodic by $\mathcal{M}_{e}(f)$.

\subsubsection{Rokhlin formula}

One important property that our dynamics have is admitting a \textbf{generating partition}. Recall that a partition is said to be generating if its pre-images generate the Borelian $\sigma$-algebra. If its domain is a metric space, then a partition such that the diameter of the elements of $\bigvee_{i=m}^{+\infty} f^{-i}(\mathcal{P})$ gets arbitrarily small is a generating partition.

Next, we recall the definition of \textit{Jacobian}, which together with generating partition gives the Rokhlin formula. Let $f : M \rightarrow M$ be a locally invertible map and a given probability $\nu$ (not necessarily invariant), define the \textbf{Jacobian} of $f$ with respect to $\nu$ as the measurable function $J_\nu(f)$, which is essentially unique, satisfying:
$$\nu(f(A))=\int_A J_{\nu}(f) d\nu$$
for any invertibility domain $A$.

The existence of generating partition gives a fundamental tool for calculating the metric entropy:

\begin{theorem}[\bf Rokhlin formula] Let $f: M \to M$ be a locally invertible transformation and $\nu$ be an $f$-invariant probability. Assume that there is some generating, up to measure zero, partition $\mathcal{P}$ such that every $P \in \mathcal{P}$ is an invertibility domain of $f$. Then

$$h_\mu(f)=\int \log J_\mu(f) d\mu.$$

\end{theorem}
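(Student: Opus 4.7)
The plan is to reduce $h_\mu(f)$ to a conditional entropy and then identify the conditional information function pointwise with $\log J_\mu(f)$, using the Rokhlin disintegration of $\mu$ along the fibers of $f$.

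First, since $\mathcal{P}$ is generating modulo $\mu$-null sets, the Kolmogorov-Sinai theorem yields $h_\mu(f) = h_\mu(f, \mathcal{P})$. A standard identity for the entropy of a generator then gives
\[
h_\mu(f, \mathcal{P}) \,=\, H_\mu\Big(\mathcal{P}\,\Big|\,\bigvee_{n=1}^{\infty} f^{-n}\mathcal{P}\Big) \,=\, H_\mu\bigl(\mathcal{P}\,\big|\, f^{-1}\mathcal{B}\bigr),
\]
where $\mathcal{B}$ denotes the Borel $\sigma$-algebra of $M$; the second equality uses that $\bigvee_{n=0}^{\infty} f^{-n}\mathcal{P} = \mathcal{B}$ modulo $\mu$-null sets, so that $\bigvee_{n=1}^{\infty} f^{-n}\mathcal{P} = f^{-1}\mathcal{B}$.

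The heart of the argument is to prove the pointwise identity
\[
I_\mu\bigl(\mathcal{P}\,\big|\, f^{-1}\mathcal{B}\bigr)(x) \,=\, \log J_\mu(f)(x) \qquad \text{for $\mu$-a.e. $x$.}
\]
For this I would disintegrate $\mu$ along the fibers of $f$ as $\mu = \int \mu_y\, d\mu(y)$ (which makes sense because $f_*\mu = \mu$), with each $\mu_y$ supported on $f^{-1}(\{y\})$. Since each element of $\mathcal{P}$ is an invertibility domain, every fiber $f^{-1}(\{y\})$ meets any given $P \in \mathcal{P}$ in at most one point, so each $\mu_y$ is purely atomic. The defining relation $\mu(f(A)) = \int_A J_\mu(f)\, d\mu$, applied to arbitrary measurable $A \subset P$ and combined with the disintegration formula, translates into
\[
\mu_{f(x)}(\{x\}) \,=\, \frac{1}{J_\mu(f)(x)} \qquad \text{for $\mu$-a.e. $x \in P$,}
\]
which identifies $\mu(P \mid f^{-1}\mathcal{B})(x)$ with $1/J_\mu(f)(x)$, and hence the information function with $\log J_\mu(f)(x)$.

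Integrating this pointwise identity yields $H_\mu(\mathcal{P} \mid f^{-1}\mathcal{B}) = \int \log J_\mu(f)\, d\mu$, and chaining with the two reductions above closes the argument. The main obstacle is the third step: pointwise identification of the atomic weights of $\mu_y$ with $1/J_\mu(f)$ requires some care, using the change-of-variables formula $\int_P (g\circ f)\, J_\mu(f)\, d\mu = \int_{f(P)} g\, d\mu$ for nonnegative measurable $g$ together with the essential uniqueness of disintegration on standard Borel spaces, and verifying the consistency condition $\sum_{z \in f^{-1}(\{y\})} 1/J_\mu(f)(z) = 1$ at $\mu$-a.e. $y$, which is precisely the content of the invariance $f_*\mu = \mu$.
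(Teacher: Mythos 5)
The paper states this theorem as a classical preliminary fact (citing \cite{OV16}) and gives no proof of its own, so there is nothing internal to compare against. Your argument is correct and is exactly the standard textbook proof: reduce $h_\mu(f)$ to the conditional entropy $H_\mu(\mathcal{P}\mid f^{-1}\mathcal{B})$ via Kolmogorov--Sinai and the identity $h_\mu(f,\mathcal{P})=H_\mu\bigl(\mathcal{P}\mid\bigvee_{n\ge 1}f^{-n}\mathcal{P}\bigr)$, then identify the conditional information function pointwise with $\log J_\mu(f)$ by disintegrating $\mu$ along fibers of $f$ and using that each fiber meets each $P\in\mathcal{P}$ in at most one point, so the fiberwise conditional measure is atomic with weights $1/J_\mu(f)$. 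Two small points worth being explicit about if you write this up: the invocation of Kolmogorov--Sinai tacitly assumes $H_\mu(\mathcal{P})<\infty$ (or that $\log J_\mu(f)\in L^1(\mu)$, which suffices to make the conditional-entropy identity the primary one when $\mathcal{P}$ is countably infinite); and the paper's statement has a typo ($\nu$ versus $\mu$) that you correctly read through.
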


\

\subsubsection{Lyapunov exponents}

The Lyapunov exponents translate the asymptotic rates of expansion and contraction of a smooth dynamical system. In a broader context, these are defined via the Oseledets multiplicative ergodic theorem:

 We say that $\lambda$ is a \textit{Lyapunov exponent} for $f : M \rightarrow M$, a $C^1$ map, if there exists a point $x$ and a vector  $v \in T_xM$ such that
$$ \lambda=\lim_{n\to\infty} \dfrac{1}{n} \log \|Df^n_x(v)\|,$$

We let $L(f)$ denote the set of all Lyapunov exponents for $f$.
The Oseledets Ergodic Theorem states that for each ergodic measure $\mu \in M(f)$
there exists constants $\lambda_{1}(f, \mu) > \dots > \lambda_{k}(f,\mu)$, and a filtration of $Df-$invariants subspaces  $T_xM = V_{1}(x) \supset \ldots \supset V_{k+1}(x) = \{0\}$,
such that 
$$\lim_{n \to \infty} \dfrac{1}{n} \log \|Df^n(x) v\|= \lambda_{i}(f , \mu) = \lambda_i$$
for $\mu$-almost every $x$ and every
vector $v \in V_{i}(x) \setminus V_{i+1}(x)$, $i = 1 \ldots, k$. For non-ergodic measures the number $k$, the constants $\lambda_j$, and the tangent bundle decomposition which depends on $x$ may depend on the ergodic component. The constants $\lambda_
j$ are called the Lyapunov exponents associated to  measure $\mu$.

Now for an ergodic measure $\mu$, let $m_i:=m^i_x=\dim E^i_x-\dim E^{i+1}_x$ be the multiplicity for $i=1,\dots,k$   and $\mu-$almost every $x$. Define the sum of positive Lyapunov exponents, considering the multiplicity

$$\lambda_+(\mu)=\sum_{\lambda_j>0} m_j \lambda_j.$$

We will need the following well-known relation between entropy and positive Lyapunov exponents:

\begin{theorem}[Margulis-Ruelle inequality, \cite{Rue78}]
Let $f : M \rightarrow M$ be a $C^{1}$-local diffeomorphism that preserves an $f-$invariant and ergodic probability $\mu$. Then $$h_\mu(f)\leq  \lambda_+(\mu)$$
\end{theorem}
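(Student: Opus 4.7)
The plan is to follow the classical Margulis--Ruelle scheme: control the entropy along a sequence of increasingly fine measurable partitions by the volume growth of $Df^n$, which the Oseledets theorem tells us is governed by the sum $\lambda_+(\mu)$ of positive Lyapunov exponents.

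First, for $\varepsilon > 0$ I would construct a finite measurable partition $\mathcal{P}_\varepsilon$ of $M$ whose elements have diameter at most $\varepsilon$ and lie inside both an invertibility domain of $f$ and a chart; since $M$ is a compact manifold and $f$ a local diffeomorphism this is routine. As $\varepsilon \to 0$ such partitions become generating, so $h_\mu(f) = \lim_{\varepsilon \to 0} h_\mu(f, \mathcal{P}_\varepsilon)$. From the definition, $h_\mu(f,\mathcal{P}_\varepsilon) \leq \tfrac{1}{n} H_\mu\!\left(\bigvee_{i=0}^{n-1} f^{-i}\mathcal{P}_\varepsilon\right)$, and a standard bookkeeping step that conditions successively on the atom of $\mathcal{P}_\varepsilon$ containing $f^i(x)$ reduces the estimate to bounding $\tfrac{1}{n}\int \log N_n(x,\varepsilon)\, d\mu$, where $N_n(x,\varepsilon)$ counts the elements of $\mathcal{P}_\varepsilon$ meeting $f^n$ of the cell containing $x$.

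The geometric core is to show that $\tfrac{1}{n}\log N_n(x,\varepsilon) \leq \lambda_+(\mu) + o(1)$ for $\mu$-a.e.\ $x$. Reading $f^n$ in a chart, Oseledets yields that $Df^n_x$ maps an $\varepsilon$-ball around $x$ into an ellipsoid whose $i$-th semi-axis has length at most $\varepsilon\, e^{n\lambda_i + o(n)}$; directions with $\lambda_i \leq 0$ stay bounded and hit only a uniformly bounded number of $\varepsilon$-cells, whereas each Oseledets subspace of multiplicity $m_i$ with $\lambda_i > 0$ forces at most $\lceil e^{n\lambda_i + o(n)}\rceil^{m_i}$ cells, producing the claimed pointwise limit $\lambda_+(\mu)$.

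To pass from this pointwise asymptotic to an integral bound, I would fix $\delta > 0$ and apply Egorov to extract a set $K_\delta$ with $\mu(K_\delta) > 1 - \delta$ on which the Oseledets convergence is uniform and $N_n(x,\varepsilon) \leq e^{n(\lambda_+(\mu)+\delta)}$ for all $n \geq n_0(\delta)$. Off $K_\delta$ the coarse bound $N_n(x,\varepsilon) \leq \deg(f)^n$, finite because $f$ is a local diffeomorphism of a compact manifold, contributes at most $\delta \log \deg(f)$ after normalisation. Letting $n \to \infty$, then $\delta \to 0$, then $\varepsilon \to 0$ yields $h_\mu(f) \leq \lambda_+(\mu)$. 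The genuinely delicate point is precisely this last uniformisation: Oseledets is merely pointwise and preventing the exceptional set from contributing a term of order $n$ is exactly where the degree bound specific to local diffeomorphisms becomes indispensable; the remaining manipulations of partition entropy and the linear-algebra observation that a linear map inflates $\varepsilon$-covering numbers by the product of its singular values exceeding $1$ are routine.
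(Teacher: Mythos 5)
The paper states the Margulis--Ruelle inequality only as a citation to \cite{Rue78} and gives no proof, so the comparison here is with the classical argument. Your sketch has the right general shape (partition entropy, conditioning, covering-number bound, Oseledets), but there is a genuine gap in the order of limits, and it is exactly the point that makes the $C^1$ case delicate. You fix a partition $\mathcal{P}_\varepsilon$ of mesh $\varepsilon$ and let $n\to\infty$, bounding $N_n(x,\varepsilon)$ by reading $Df^n_x$ through Oseledets. But $N_n(x,\varepsilon)$ depends on the full nonlinear image $f^n(P_\varepsilon(x))$ of a set of \emph{fixed} diameter $\varepsilon$, and for a $C^1$ map the distortion of $f^n$ over such a set is not controlled: the cell $P_\varepsilon(x)$ contains points that are not Oseledets-generic, and their contribution can make $\tfrac1n\log N_n(x,\varepsilon)$ strictly larger than $\lambda_+(\mu)$ even for $\mu$-typical $x$. (Already in dimension one, $|f^n(P)|=\int_P|Df^n|\,dy$, and $\tfrac1n\log\int_P|Df^n|$ need not converge to $\lambda(\mu)$; Jensen only gives an inequality in the wrong direction.) Egorov applied at the base point $x$ cannot fix this, because uniformity is needed over the whole cell, not at $x$. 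Also, your fallback bound $N_n(x,\varepsilon)\le\deg(f)^n$ is not the relevant coarse bound: with your definition $N_n$ counts elements of the fixed finite partition $\mathcal{P}_\varepsilon$, so it is trivially bounded by $\#\mathcal{P}_\varepsilon$, and $\deg(f)$ plays no role.

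The classical proof runs the limits the other way around, precisely to avoid this distortion problem. One writes $h_\mu(f)=\tfrac1m h_\mu(f^m)\le \tfrac1m H_\mu(f^{-m}\mathcal{Q}\mid\mathcal{Q})$ for any finite partition $\mathcal{Q}$, and then, \emph{for each fixed $m$}, chooses $\operatorname{diam}\mathcal{Q}$ so small that $Df^m$ is nearly constant on every atom (possible by uniform continuity of $Df^m$ on the compact $M$). At that scale the image $f^m(Q(x))$ really is approximated by the ellipsoid $Df^m_x\big(Q(x)\big)$, and the linear-algebra covering lemma gives $N^{(m)}(x)\le C\prod_i\max\{1,\sigma_i(Df^m_x)\}$ with a constant independent of $m$. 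Integrating and dividing by $m$, the remaining step is $\tfrac1m\int\sum_i\log^+\sigma_i(Df^m_x)\,d\mu\to\lambda_+(\mu)$, which follows from Kingman's subadditive ergodic theorem applied to the exterior powers $\wedge^k Df^m$. So the fix is: condition on a partition of scale tied to $m$ (not to a fixed $\varepsilon$), take the mesh to zero at fixed $m$, and only then send $m\to\infty$; no Egorov or exceptional-set argument is needed.
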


However, for our context: given $F \in \mathcal{D}^{r}$ and an $f-$invariant and ergodic probability $\mu$ we have that its Lyapunov exponents are $\lambda^{c},  \lambda_{1}, \ldots, \lambda_{d} $, where $\lambda_{i}$ are the Lyapunov exponents of the expanding linear endomorphism $g$ and $\lambda^{c}(x,y) = \lim\frac{1}{n}\log ||\frac{\partial F^{n}(x,y)}{\partial y}|| = \lim\frac{1}{n}\sum_{i = 1}^{n-1}\psi(F^{i}(x,y))$ for $\mu-$almost every $(x,y)$ and $\psi : = \log||{\partial F^{n}}{y}||$. The Margulis-Ruelle's inequality can be read as: $$h_{\mu}(F) \leq \sum_{i = 1}^{d}\lambda_{i} + \max\{0 , \lambda^{c}\}.$$

Now for non-ergodic probabilites $\mu$, we define the average central Lyapunov exponent by $\lambda^{c}(\mu) = \int \log ||\frac{\partial F}{\partial y}|| d\mu.$

\

\subsection{Expanding linear endomorphism}

Let $\mathbb{T}^{d}$ be the $d-$torus and let $A$ be a $d\times d$ matrix with integer coefficients and such that all its eigenvalues $a_{1},\ldots, a_{d}$ have modulus strictly greater than 1. Then there exists a unique local diffeomorphism $g_{A}: \mathbb{T}^{d} \rightarrow \mathbb{T}^{d}$ such that $g_{A} \circ \pi = \pi \circ A$, where $\pi : \R^{d} \rightarrow \mathbb{T}^{d}$ is the universal covering. Moreover, $g_{A}$ is an expanding dynamics, its Lyapunov exponents are $\lambda_{i} = \log |a_{i}|$, the Lebesgue measure $Leb$ is $g-$invariant and $h_{top}(g_{A}) = h_{Leb}(g_{A}) =  \sum_{i=1}^{d}\lambda_{i}$. (See e.g. \cite{OV16}).

\
\subsection{Transfer Operator}

In this section, we recall some properties of the transfer operator. For more details on the transfer operator see e.g. \cite{S12} or \cite{PU10}.

In what follows, given $T : E \rightarrow E$ a bounded linear operator, we denote the spectral radius of $T$ by $\rho(T)$.

\begin{definition}
Let $F : M \rightarrow M$ be a local homeomorphism on a compact and connected manifold. Given a complex continuous function $\phi: M \rightarrow \C$ , define the Ruelle-Perron-Frobenius operator or transfer operator $\mathcal{L}_{ \phi}$ acting on functions $g : M \rightarrow \C$ this way:
 $$
 \mathcal{L}_{\phi}(g)(x) := \sum_{f(y) = x}e^{\phi(y)}g(y). 
 $$
\end{definition}

Suposse that $\Lo_{\phi}|_{C^{r}}$ has the spectral gap property. On the one hand,
via Mazur's Separation Theorem, the transfer operator has $\rho(\Lo_{\phi}|_{C^0})$ as an eigenvalue for its dual operator, that is, there exists a probability $\nu_\phi$ with $(\Lo_{\phi}|_{C^{0}})^* \nu_{\phi} = \rho(\Lo_{\phi}|_{C^0}) \nu_\phi$. On the other hand,
$\rho(\Lo_{\phi}|_{C^0}) = \rho(\Lo_{f,\phi}|_{C^{r}})$ and $\Lo_{f,\phi}|_{C^{r}}$ admits an eigenfunction $h_\phi \in C^{r}$ with respect to $\rho(\Lo_{f,\phi}|_C^{r})$ which is the leading eigenvalue. We can assume, up to rescaling, that  $\int h_{\phi}d\nu_{\phi} = 1$.
Then the probability $\mu_\phi=h_\phi \cdot \nu_\phi$ is proved to be $f-$invariant and is a candidate for the equilibrium state.

\begin{remark}
We recall the notion of analyticity for functions on Banach spaces. Let $E_{1} , E_{2}$ be Banach spaces and denote by $\mathcal{L}^{i}_{s}(E_{1} , E_{2})$ the space of symmetric $i$-linear transformations from
$E_{1}^{i}$ to $E_{2}$. For notational simplicity, given $P_{i} \in \mathcal{L}^{i}_{s}(E_{1} , E_{2})$ and $h\in E_{1}$ we set
$P_{i}(h) := P_{i}(h,\ldots,h)$.

We say the function $f : U \subset E_{1} \rightarrow E_{2}$, defined on an open subset,
is \emph{analytic} if for all $x \in U$ there exists $r > 0$ and for each $i\ge 1$ there exists $P_{i} \in \mathcal{L}^{i}_{s}(E_{1} , E_{2})$ (depending on $x$) such that
$$
f(x + h) = f(x) + \sum_{i=1}^{\infty}\frac{P_{i}(h)}{i!}
$$ for all $h \in B(0 , r)$ and the convergence is uniform.

Analytic functions on Banach spaces have completely similar properties to real analytic and complex analytic functions.
For instance, if $f : U \subset E \rightarrow F$ is analytic then $f$ is $C^{\infty}$ and for every $x\in U$ one has
$P_{i}=D^{i}f(x)$. For more details see for example \cite[Chapter~12]{C85}. 
\end{remark}

Define $SG := \{\phi \in C^{r}(M , \C) : \mathcal{L}_{f,\phi}|_{C^{r}} \text{ has the spectral gap property} \}$, it follows from \cite[Corollary 4.12]{BC21}

\begin{proposition}\label{analy}
$SG$ is an open subset, and the following map is analytical:
$$
SG \ni \phi \mapsto \big(\rho(\mathcal{L}_{f,\phi|C^{r}}) , h_{\phi} , \nu_{\phi}).
$$
\end{proposition}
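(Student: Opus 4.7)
The plan is to combine the analyticity of the map $\phi \mapsto \mathcal{L}_{f,\phi}$ with classical analytic perturbation theory for isolated simple eigenvalues.

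First I would show that the map $\Psi \colon C^{r}(M,\C) \to \mathcal{L}(C^{r}(M,\C))$ defined by $\Psi(\phi) = \mathcal{L}_{f,\phi}$ is analytic. Since $f$ is a local homeomorphism, one can write $\mathcal{L}_{f,\phi}(g)(x) = \sum_{y\in f^{-1}(x)} e^{\phi(y)} g(y)$; because $r \geq 1$ the pointwise exponential $\phi \mapsto e^{\phi}$ is analytic from $C^{r}(M,\C)$ into itself (its power series expansion converges in any $C^{r}$-ball), and multiplication by an element of $C^{r}$ followed by the finite pushforward through $f^{-1}$ is a bounded linear operation. Composition and finite sum of analytic maps being analytic yields analyticity of $\Psi$.

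Next I would handle openness of $SG$. Fix $\phi_0 \in SG$ and let $\lambda_1 = \rho(\mathcal{L}_{f,\phi_0}|_{C^{r}})$ be the leading simple eigenvalue, isolated from the rest of the spectrum $\Sigma_1 \subset \{|z| < \lambda_0\}$ with $\lambda_0 < \lambda_1$. Choose a simple closed positively oriented curve $\gamma$ in $\C$ separating $\{\lambda_1\}$ from $\Sigma_1$. The spectral projection
\[
P(\phi) := \frac{1}{2\pi i} \oint_{\gamma} (z - \mathcal{L}_{f,\phi})^{-1}\, dz
\]
is well-defined and analytic in $\phi$ whenever $\gamma$ lies in the resolvent set of $\mathcal{L}_{f,\phi}$. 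Since $\phi \mapsto \mathcal{L}_{f,\phi}$ is norm-continuous and the resolvent is open, for $\phi$ sufficiently close to $\phi_0$ the curve $\gamma$ stays in the resolvent set and $\operatorname{rank} P(\phi) = \operatorname{rank} P(\phi_0) = 1$ (rank is locally constant for projections varying continuously in norm). By standard arguments (upper semicontinuity of the spectrum, spectral radius formula applied to $(I-P)\mathcal{L}_{f,\phi}$), one also obtains $\rho(\mathcal{L}_{f,\phi}|_{\operatorname{Im}(I-P(\phi))}) < |\gamma_{\min}|$, giving the spectral gap. Thus a whole $C^{r}$-neighborhood of $\phi_0$ lies in $SG$, proving openness.

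Analyticity of the three assignments is then a direct consequence. Analyticity of $\phi \mapsto P(\phi)$ follows by differentiating the contour integral, since $\phi \mapsto (z-\mathcal{L}_{f,\phi})^{-1}$ is analytic (as the composition of the analytic map $\Psi$ with the analytic operation of inversion for invertible operators), uniformly in $z \in \gamma$. For the leading eigenvalue, write
\[
\rho(\mathcal{L}_{f,\phi}|_{C^{r}}) = \frac{1}{2\pi i}\operatorname{tr}\!\oint_{\gamma} z\,(z - \mathcal{L}_{f,\phi})^{-1}\, dz,
\]
which is analytic in $\phi$ as trace of a rank-one analytic family. To extract the eigenfunction, fix a continuous linear functional $\ell$ with $\ell(h_{\phi_0}) \neq 0$; then $h_{\phi} := P(\phi)h_{\phi_0} / \ell(P(\phi)h_{\phi_0})$ defines an eigenfunction depending analytically on $\phi$ near $\phi_0$. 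Replacing the normalization by $h_{\phi} := P(\phi)h_{\phi_0}/\langle \nu_{\phi}, P(\phi)h_{\phi_0}\rangle$, with $\nu_{\phi}$ constructed via the same argument applied to the dual operator $\mathcal{L}_{f,\phi}^{*}$ and its dual spectral projection around the same eigenvalue, and noting that $\nu_{\phi}$ can be taken to be a probability (the leading eigenvector of $\mathcal{L}_{f,\phi}^{*}|_{(C^{r})^{*}}$ is positive on positive functions and can be normalized), yields the asserted analytic triple $\phi \mapsto (\rho(\mathcal{L}_{f,\phi}|_{C^{r}}), h_{\phi}, \nu_{\phi})$.

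The main obstacle I expect is verifying that the normalization $\int h_{\phi}\, d\nu_{\phi} = 1$ is compatible with both analytic choices of eigenvectors simultaneously, and ensuring that $\nu_{\phi}$ stays a \emph{probability} (not merely a complex functional) in a neighborhood; this requires using positivity of the Perron eigenvector together with a perturbative argument that the sign of $\langle \nu_{\phi}, \mathbf{1}\rangle$ is preserved near $\phi_0$, so that the rescaled $\nu_{\phi}/\langle \nu_{\phi}, \mathbf{1}\rangle$ depends analytically on $\phi$.
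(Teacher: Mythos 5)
Your proof is the standard Kato--Riesz analytic perturbation argument, which is surely the content of Corollary~4.12 in \cite{BC21} that the paper cites without reproducing a proof; so your plan is correct and matches the expected approach. Two clarifications on the obstacles you flag at the end. First, the normalization-compatibility worry is a non-issue: normalize the dual eigenvector by $\langle \nu_\phi, \mathbf{1}\rangle = 1$ (this equals $1$ at $\phi_0$ and remains nonzero nearby, since the dual Riesz projection varies analytically and the pairing is continuous), and only then set $h_\phi = P(\phi)h_{\phi_0}/\langle\nu_\phi, P(\phi)h_{\phi_0}\rangle$; there is no circular dependence between the two normalizations. Second, for complex $\phi$ you should not try to keep $\nu_\phi$ a probability or the leading eigenvalue positive real — the analytic objects are the complex functional $\nu_\phi$ in $(C^{r})^{*}$ and the simple isolated eigenvalue $\lambda_1(\phi)$ of $\mathcal{L}_{f,\phi}$, and this latter quantity is precisely what your contour-trace formula computes (not $\rho$, whose modulus is not analytic); the identifications $\rho(\mathcal{L}_{f,\phi}|_{C^{r}}) = \lambda_1(\phi)$ and $\nu_\phi$ a probability hold on the real slice of $SG$ (where $\mathcal{L}_{f,\phi}$ is a positive operator and Lemma~\ref{Lemaxi} applies), which is all the paper uses downstream.
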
 

One weaker spectral property is called \textit{quasi-compactness}:

\begin{definition}
Given $E$ a complex Banach space and $T:E \to E$ a bounded linear operator, we say that $T$ is quasi-compact if there exists $0<\sigma<\rho(T)$ and a decomposition of $E=F\oplus H$ as follows: $F$ and $H$ are closed and $T$-invariant, $\dim F < \infty$, $\rho(T|_F)>\sigma$ and $\rho(T|_H) \leq \rho$.
\end{definition}

A definition equivalent to quasi-compactness can be given via the \textbf{essential spectral radius} $$\rho_{ess}(T):=\inf\{r>0; \;sp(L)\setminus \overline{B(0,r)} \text{ contains only eigenvalues of finite multiplicity}\}.
$$
In fact, quasi-compactness is equivalent to having $\rho_{ess}(T) < \rho(T)$.



\

\section{Phase transitions results}\label{Proofs}



This section is devoted to the proof of the Theorem \ref{mainthA}.

The proof is inspired in \cite{BC21}: 1 - Existence of thermodynamic phase transitions; 2 - Existence of spectral phase transition; 3 - Comprehension of the topological pressure function via spectral phase transition.

For the spectral phase transition: 1 - Absence of spectral after thermodynamic phase transition using Nagaev's method; 2 - Spectral gap property before transition using estimates of essential spectral radius.

\subsubsection{Existence of thermodynamic phase transitions}

 Our first goal is to study the topological pressure function, for $t \in \R$:

$$P(t):=\sup\{h_\mu(f) + t\int \phi^{c}d\mu,\; \mu \text{ is an } F-\text{invariant probability} \} = 
$$
$$
\sup\{h_\mu(f) + - t\lambda^{c}(\mu),\; \mu \text{ is an } F-\text{invariant probability} \}.$$
 We will show that $\R \ni t \mapsto P(t)$ is not analytical.
 
 \ 
 
 Initially, we prove the following lemma:

\begin{lemma}\label{L2}
If $F\in \mathcal{D}^{r}$ then $\lambda^{c}(\mu)\geq 0$, for every $\mu\in \mathcal{M}_1(F)$.
\end{lemma}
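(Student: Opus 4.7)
The plan is to show $\lambda^{c}(\delta_{\mathcal{O}(p)})\ge 0$ for every periodic orbit $\mathcal{O}(p)$ of $F$, and then to conclude by weak$^{*}$-density of the periodic measures together with weak$^{*}$-continuity of $\mu\mapsto \lambda^{c}(\mu)$. Item (4) in the definition of $\mathcal{D}^{r}$ says that $F$ is topologically conjugate to a uniformly expanding map; since expansiveness and the periodic specification property are invariants of topological conjugacy, $F$ is expansive and satisfies periodic specification. In particular, the ergodic measures supported on periodic orbits form a weak$^{*}$-dense subset of $\mathcal{M}_{1}(F)$. Since $F$ is a $C^{r}$ local diffeomorphism, $(x,y)\mapsto \log|Df_{x}(y)|$ is a continuous function on the compact space $\mathbb{T}^{d}\times \Sc^{1}$, hence $\mu\mapsto \lambda^{c}(\mu)=\int \log|Df_{x}(y)|\,d\mu$ is weak$^{*}$-continuous, validating the reduction to periodic orbits.

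For the key step, assume for contradiction that some periodic point $p=(x_{p},y_{p})$ of period $n$ satisfies $|Df_{x_{p}}^{n}(y_{p})|<1$, where $f_{x_{p}}^{n} := f_{g^{n-1}(x_{p})}\circ\cdots\circ f_{x_{p}}$ is a $C^{r}$ self-map of $\Sc^{1}$ (well defined on the fiber $\{x_{p}\}\times \Sc^{1}$ because $g^{n}(x_{p})=x_{p}$). Then $y_{p}$ is an attracting fixed point of $f_{x_{p}}^{n}$, so there is an open interval $V\ni y_{p}$ in $\Sc^{1}$ with $(f_{x_{p}}^{n})^{k}(y)\to y_{p}$ uniformly for $y\in V$ as $k\to\infty$. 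Writing a general iterate as $k=jn+r$ with $0\le r<n$, one has
$$F^{jn+r}(x_{p},y)=\bigl(g^{r}(x_{p}),\; f_{g^{r-1}(x_{p})}\circ\cdots\circ f_{x_{p}}((f_{x_{p}}^{n})^{j}(y))\bigr).$$
Combining the uniform attraction to $y_{p}$ on $V$ with uniform continuity of the finitely many intermediate compositions $f_{g^{r-1}(x_{p})}\circ\cdots\circ f_{x_{p}}$, $0\le r<n$, one can shrink $V$ so that the entire forward $F$-orbit of each $(x_{p},y)$ with $y\in V$ stays within a pre-chosen expansivity constant $\epsilon$ of the periodic orbit $\mathcal{O}(p)$. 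Expansiveness of $F$ then forces $(x_{p},y)=p$ for every $y\in V$, which contradicts that $V$ is a non-trivial neighborhood of $y_{p}$.

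Hence $|Df_{x_{p}}^{n}(y_{p})|\ge 1$ at every periodic point, so $\lambda^{c}(\delta_{\mathcal{O}(p)})=\tfrac{1}{n}\log|Df_{x_{p}}^{n}(y_{p})|\ge 0$, and the density/continuity argument of the first paragraph gives $\lambda^{c}(\mu)\ge 0$ for every $\mu\in \mathcal{M}_{1}(F)$.

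The main obstacle I anticipate is the bookkeeping needed to upgrade the attraction of $y_{p}$ under the return map $f_{x_{p}}^{n}$ (which a priori only controls the subsequence $F^{nk}$) into a genuine shadowing estimate valid for every $F^{k}$. This is ultimately handled by uniform continuity of the finitely many intermediate compositions, but has to be set up carefully because $F$ is non-invertible and the forward orbit leaves the base fiber $\{x_{p}\}\times \Sc^{1}$ at every iterate that is not a multiple of $n$.
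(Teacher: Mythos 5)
Your proof is correct and follows essentially the same strategy as the paper: reduce to a periodic orbit whose central derivative is strictly contracting, then use the resulting trapped neighborhood (controlling the intermediate iterates via uniform continuity of the finitely many fiber compositions) to contradict expansiveness of $F$. The only cosmetic difference is the reduction step — the paper uses specification to extract a single periodic point approximating a hypothetical measure with $\lambda^{c}<0$, whereas you prove the bound for all periodic orbit measures and invoke their weak$^{*}$-density together with weak$^{*}$-continuity of $\mu\mapsto\lambda^{c}(\mu)$ — but both reductions rest on the same specification machinery and feed into the identical expansiveness argument.
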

\begin{proof}
Suppose $\lambda^{c}(\mu)<0$. By the periodic specification property there exist $0<\lambda<1$ and $(x_{0},y_{0})\in \mathbb{T}^{d}\times\mathbb{S}^{1}$ such that $F^{n}(x_{0},y_{0})=(x_{0},y_{0})$ and $||\frac{\partial F^{kn}(x_{0},y_{0})}{\partial y
}||<\lambda^{k}, \forall k\geq 1$. In particular $f(y):=f_{g^{n-1}(x_{0})}\circ f_{g^{n-2}(x_{0})}\circ ...\circ f_{x_{0}}(y)$ satisfies $f(y_{0})=y_{0}$ and $|Df(y_{0})|\leq \lambda$. By intermediate value theorem there exist $\delta>0$ such that $|f(z_1)-f(z_2)|\leq |z_1-z_2|$, for all $z_1,z_2\in B(y_{0},\delta)$. As $F$ is uniformly continuous, given $\epsilon>0$ we can find a $\tilde{\delta}\leq \delta$ such that
\begin{align*}
|f_{g^{m-1}(x_{0})}\circ f_{g^{m-2}(x_{0})}\circ ...\circ f_{x_{0}}(z_1)-f_{g^{m-1}(x_{0})}\circ f_{g^{m-2}(x_{0})}\circ ...\circ f_{x_{0}}(z_2)|<\epsilon
\end{align*}
for all $m\geq 0$ and $z_1,z_2\in B(y_{0},\tilde{\delta})$. As $\epsilon$ is arbitrary this implies that $F$ is not expansive, which is a contradiction.   
\end{proof}

The following result shows us that for $F\in\mathcal{D}^{r}$ the pressure function has thermodynamic phase transition.

\begin{proposition}\label{L3}
There exist $t_0\in (0,1]$ such the pressure function $\mathbb{R}\ni t\mapsto P(t)$ is not analytic in $t_0$. Moreover, $P(t) = h_{top}(g)$ for all $t \geq t_{0}$
\end{proposition}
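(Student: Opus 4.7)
The plan is to exhibit an explicit $F$-invariant probability with vanishing central Lyapunov exponent and entropy equal to $h_{top}(g)$, and then to sandwich $P(t)$ for large $t$ by combining the variational principle with the Margulis--Ruelle inequality.

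Since the probability $\eta$ from item (3) of the definition of $\mathcal{D}^{r}$ is $f_{x}$-invariant for every base point $x$, a direct check on product rectangles (using $g$-invariance of Lebesgue and $f_{x}$-invariance of $\eta$) shows that $\mu_{\eta} := \mathrm{Leb} \times \eta$, where $\mathrm{Leb}$ denotes Lebesgue on $\mathbb{T}^{d}$, is $F$-invariant. Fubini yields $\lambda^{c}(\mu_{\eta}) = \int \chi_{\eta}(f_{x})\, d\mathrm{Leb}(x) = 0$. For the entropy, the Margulis--Ruelle inequality gives $h_{\mu_{\eta}}(F) \leq h_{top}(g) + \max\{0,\lambda^{c}(\mu_{\eta})\} = h_{top}(g)$, while the factor map $(x,y) \mapsto x$ pushes $\mu_{\eta}$ to $\mathrm{Leb}$, which is the measure of maximal entropy of the linear expanding endomorphism $g$, so $h_{\mu_{\eta}}(F) \geq h_{\mathrm{Leb}}(g) = h_{top}(g)$. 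Hence $h_{\mu_{\eta}}(F) = h_{top}(g)$.

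Substituting $\mu_{\eta}$ into the variational principle gives $P(t) \geq h_{top}(g)$ for every $t \in \mathbb{R}$. For the reverse inequality when $t \geq 1$, combining Lemma~\ref{L2} (i.e.\ $\lambda^{c}(\mu) \geq 0$) with the Margulis--Ruelle inequality yields
$$h_{\mu}(F) - t\lambda^{c}(\mu) \;\leq\; h_{top}(g) + (1-t)\lambda^{c}(\mu) \;\leq\; h_{top}(g),$$
and taking the supremum over $\mu$ shows $P(t) = h_{top}(g)$ for every $t \geq 1$. On the other hand, $P(0) = h_{top}(F) = \log \deg(F) > \log \deg(g) = h_{top}(g)$ by item (2) of the definition of $\mathcal{D}^{r}$.

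Finally, set $t_{0} := \inf\{t \in \mathbb{R} : P(t) = h_{top}(g)\}$. Continuity of the pressure together with $P(0) > h_{top}(g)$ forces $t_{0} > 0$, while $P(1) = h_{top}(g)$ forces $t_{0} \leq 1$. Convexity of $P$ combined with $P \geq h_{top}(g)$ and $P(t_{0}) = P(1) = h_{top}(g)$ pins down $P \equiv h_{top}(g)$ on $[t_{0}, +\infty)$. Since $P(t) > h_{top}(g)$ for every $t < t_{0}$ by definition of the infimum, $P$ cannot be analytic at $t_{0}$: otherwise the identity principle would force $P$ to be constant on an entire neighborhood of $t_{0}$, contradicting $P(t) > h_{top}(g)$ for $t$ slightly below $t_{0}$. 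The only delicate ingredient is the construction of $\mu_{\eta}$ and the entropy computation; the rest is a clean convexity-plus-continuity argument.
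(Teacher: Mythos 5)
Your proposal is correct and follows essentially the same approach as the paper: both use the product measure $\mathrm{Leb}\times\eta$ to force $P(t)\ge h_{top}(g)$, Margulis--Ruelle together with Lemma~\ref{L2} to force $P(t)\le h_{top}(g)$ for $t\ge 1$, the degree inequality $\deg(F)>\deg(g)$ to separate $P(0)$ from $h_{top}(g)$, and then convexity/continuity to place $t_0=\inf\{t:P(t)=h_{top}(g)\}$ in $(0,1]$ and conclude non-analyticity. The only difference is that you spell out the $F$-invariance and entropy computation for $\mathrm{Leb}\times\eta$ and the convexity-plus-identity-principle endgame more explicitly than the paper does.
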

\begin{proof}
By the previous lemma, we see that $t\rightarrow P(t)$ is monotone, not increasing. We notice that
\begin{itemize}
\item $P(0)=h_{top}(F)\geq h_{top}(g)>0$
\item If $\eta$ is the measure in the definition of $\mathcal{D}^{r}$ then $h_{Leb\times \eta}(F)-t\lambda^{c}(Leb\times\eta)=h_{Leb\times \eta}(F)$
\end{itemize}
Then we have $P(t)\geq h_{Leb\times\eta}(F)\geq h_{Leb}(g)$ for all $t \in \R$. Recall that by the Margullis-Ruelle inequality (see \cite{Rue78}), we have
\begin{align*}
h_{\mu}(F)-\lambda^{c}(\mu)\leq \lambda_{+}(\mu)  -\lambda^{c}(\mu)= \sum\limits_{i=1}^{d}\lambda_{i}(g,Leb).
\end{align*}
Which means that $P(1)\leq \sum\limits_{i=1}^{d}\lambda(g,Leb)=h_{Leb}(g)=h_{top}(g)\leq P(t)$, and so $P(t)=h_{top}(g), \forall t\geq 1$. As $F$ is differentiable then $h_{top}(F)\geq\log(deg(F))> \log(deg(g)) = h_{top}(g).$ We conclude that $P(t)$ is a convex monotone function and $P(0) > P(t) = h_{top}(g)$ for all $t \geq 1$. Thus, defining $t_0=\inf\{t\in (0,1]; P(t)=h_{top}(g)\}$ we have that the topological pressure function $\R \ni t \mapsto P(t)$ is not analytic in $t_{0}$.
\end{proof}

\

\subsubsection{Absence of  spectral gap after transition}

Our next goal is to show that $\mathcal{L}_{F , t\phi^{c}|C^{r-1}}$ has no spectral gap property for $t \geq t_{0}$, where $t_0$ is the thermodynamical phase transition parameter obtained in the Proposition \ref{L3}.

 First we state some important topological properties for $F \in \mathcal{D}^{r}$. As $F$ is topologically conjugated to a uniformly expanding dynamic and its domain is connected, from this conjugacy, we then find that $F$ is expansive, strongly transitive, and it admits generating partition by domains of injectivity. In particular, we can apply Rokhlin's formula (see \cite{OV16}). As we shall see, such properties imply a good understanding of the transfer operator.
 
The following results have the same proof of the respective results in \cite{BC21}:

\begin{lemma}\label{Lemaxi}
Let $f:M \rightarrow M$ be a $C^{r}-$local diffeomorphism  on a compact and connected manifold $M$, such that $\{f^{-n}(x) : n \geq 0\}$ is dense in $M$ for all $x \in M$, and let $\phi \in C^{r}(M , \R)$ be a real  function. If $\Lo_{f,\phi}\varphi=\lambda\varphi$
with $|\lambda|=\rho(\Lo_{f,\phi}|_{C^{r}})$ and $\varphi \in C^{r}(M , \R)\setminus\{0\}$, then
$\Lo_{f,\phi}|\varphi|=\rho(\Lo_{f,\phi}|_{C^{r}})|\varphi|.$ Furthermore, $\rho(\Lo_{f,\phi}|_{C^0})=\rho(\Lo_{f,\phi}|_{C^{r}})$, $\varphi$ is bounded away from zero and $\dim\ker(\Lo_{f,\phi}|_{C^{r}} - \lambda I)=1$.
\end{lemma}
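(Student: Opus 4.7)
The plan is to follow the classical Ruelle--Perron--Frobenius method, with the absence of uniform expansion in the present setting compensated by the density-of-preorbits hypothesis. Set $\rho_{r}:=\rho(\mathcal{L}_{f,\phi}|_{C^{r}})=|\lambda|$ and $\rho_{0}:=\rho(\mathcal{L}_{f,\phi}|_{C^{0}})$. The first step is essentially elementary: since $\phi$ is real the weights $e^{\phi(y)}$ are positive, so the triangle inequality gives $|\mathcal{L}_{f,\phi}\varphi(x)|\le \mathcal{L}_{f,\phi}|\varphi|(x)$ pointwise on $M$, and substituting the eigenrelation yields $|\lambda|\,|\varphi|\le \mathcal{L}_{f,\phi}|\varphi|$. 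Iterating this inequality and taking sup norms produces $|\lambda|^{n}\|\varphi\|_{\infty}\le \|\mathcal{L}_{f,\phi}^{n}\|_{C^{0}}\|\varphi\|_{\infty}$, whence $|\lambda|\le\rho_{0}$, and therefore $\rho_{r}\le\rho_{0}$.

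The second step produces an $\mathcal{L}_{f,\phi}^{\ast}$-eigenmeasure at $\rho_{0}$: the Schauder--Tychonoff fixed point theorem applied to the continuous projection $\mu\mapsto \mathcal{L}_{f,\phi}^{\ast}\mu/\mathcal{L}_{f,\phi}^{\ast}\mu(\mathbf{1})$ on the weak-$\ast$ compact convex set of Borel probabilities on $M$ yields $\nu$ with $\mathcal{L}_{f,\phi}^{\ast}\nu=\rho_{0}\nu$. The density hypothesis $\overline{\bigcup_{n\ge 0}f^{-n}\{x\}}=M$ then forces $\nu$ to have full topological support: given a nonempty open $U$, for each $x\in M$ some preorbit of $x$ meets $U$, so $\mathcal{L}_{f,\phi}^{n(x)}\mathbf{1}_{U}(x)>0$ for some $n(x)\ge 0$; continuity of the iterates together with compactness of $M$ produce finitely many indices $n_{1},\dots,n_{k}$ with $\sum_{j=1}^{k}\mathcal{L}_{f,\phi}^{n_{j}}\mathbf{1}_{U}\ge c>0$ uniformly on $M$, and integrating against $\nu$ via $\mathcal{L}_{f,\phi}^{\ast}\nu=\rho_{0}\nu$ forces $\nu(U)>0$.

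The main obstacle is the third step, which upgrades $\rho_{r}\le\rho_{0}$ to equality while simultaneously yielding the eigenrelation $\mathcal{L}_{f,\phi}|\varphi|=\rho_{r}|\varphi|$. The plan is to first construct a strictly positive continuous eigenfunction $h\in C^{0}(M,\mathbb{R}_{>0})$ of $\mathcal{L}_{f,\phi}$ at $\rho_{0}$ by a Krein--Rutman-type argument (for instance as a suitable subsequential weak-$\ast$ limit of the Ces\`{a}ro averages $\frac{1}{N}\sum_{n=0}^{N-1}\mathcal{L}_{f,\phi}^{n}\mathbf{1}/\rho_{0}^{n}$, whose unit integral against $\nu$ prevents the limit from vanishing, while density of preorbits upgrades nonnegativity of the limit to strict positivity). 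Then compare $h$ with $|\varphi|$ via the scalar $c:=\sup_{x\in M}|\varphi(x)|/h(x)$, attained at some $x_{0}\in M$ because $h$ is bounded away from zero: the nonnegative continuous function $g:=c h-|\varphi|$ vanishes at $x_{0}$; positivity of $\mathcal{L}_{f,\phi}$ gives $\mathcal{L}_{f,\phi}g\ge 0$, while $\mathcal{L}_{f,\phi}g(x_{0})=c\rho_{0}h(x_{0})-\mathcal{L}_{f,\phi}|\varphi|(x_{0})\le c(\rho_{0}-|\lambda|)h(x_{0})$ thanks to $\mathcal{L}_{f,\phi}|\varphi|\ge|\lambda|\,|\varphi|$. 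Combined with the duality identity $\int \mathcal{L}_{f,\phi}^{n}g\, d\nu=\rho_{0}^{n}\int g\, d\nu$, a rigidity argument analogous to the one carried out in \cite{BC21} extracts $\rho_{0}=|\lambda|$ and $\mathcal{L}_{f,\phi}g(x_{0})=0$; positivity of each summand in the sum defining $\mathcal{L}_{f,\phi}g(x_{0})$ then forces $g\equiv 0$ on $f^{-1}\{x_{0}\}$, and iterating plus density of the preorbit of $x_{0}$ propagates $g\equiv 0$ on a dense subset of $M$, hence on all of $M$ by continuity. This simultaneously yields $\rho(\mathcal{L}_{f,\phi}|_{C^{0}})=\rho(\mathcal{L}_{f,\phi}|_{C^{r}})$, the eigenrelation $\mathcal{L}_{f,\phi}|\varphi|=\rho_{r}|\varphi|$, and $|\varphi|=c h>0$ everywhere, so $\varphi$ is bounded away from zero.

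For the final assertion $\dim\ker(\mathcal{L}_{f,\phi}-\lambda I)=1$, any other $\psi\in C^{r}(M,\mathbb{R})\setminus\{0\}$ with $\mathcal{L}_{f,\phi}\psi=\lambda\psi$ satisfies $|\psi|=c' h$ for some $c'>0$ by the same comparison argument applied to $\psi$; connectedness of $M$ together with non-vanishing of $\varphi$ and $\psi$ forces each to have constant sign on $M$, so $\psi$ is a real scalar multiple of $\varphi$, completing the proof.
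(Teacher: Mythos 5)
Your opening moves are sound: the pointwise inequality $|\lambda|\,|\varphi|\le\Lo_{f,\phi}|\varphi|$ and the conclusion $\rho_r:=\rho(\Lo_{f,\phi}|_{C^r})\le\rho_0:=\rho(\Lo_{f,\phi}|_{C^0})$, the Schauder fixed-point construction of $\nu$ with $\Lo_{f,\phi}^{\ast}\nu=c\nu$, and the full-support argument via strong transitivity are all correct (modulo replacing $\mathbf{1}_U$ by a continuous bump). The final ``dimension one'' step is also fine once the earlier conclusions are in place. The problem is the core of Step~3: you want a strictly positive \emph{continuous} eigenfunction $h$ of $\Lo_{f,\phi}$ at $\rho_0$, and there is no justification that one exists in this setting. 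The whole point of the lemma is that $f$ is \emph{not} assumed expanding, so $\Lo_{f,\phi}|_{C^0}$ has neither a Lasota--Yorke inequality nor any compactness of its iterates; the Ces\`aro averages $\frac1N\sum_{n<N}\Lo_{f,\phi}^{n}\mathbf{1}/\rho_0^{n}$ need not even be uniformly bounded, let alone equicontinuous, and ``weak-$\ast$'' limit does not make sense in $C^0(M)$ (which is not a dual space). Moreover the assertion that these averages have unit integral against $\nu$ presupposes $c=\rho_0$, which is exactly what remains to be proved -- a genuine circularity. Finally, the ``rigidity argument analogous to \cite{BC21}'' that is supposed to extract $\rho_0=|\lambda|$ is not spelled out, and with $h$ unavailable the comparison $g=ch-|\varphi|$ cannot be run. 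In short, the proposal silently imports the full Ruelle--Perron--Frobenius machinery (quasi-compactness on $C^0$) into a lemma whose entire purpose is to operate without it.

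The gap can be closed without ever constructing $h$, by exploiting the \emph{given} eigenfunction $\varphi\in C^r$ more directly. Since $\|\cdot\|_{C^0}\le\|\cdot\|_{C^r}$ and $C^r$ is dense in $C^0$, the restriction $\nu|_{C^r}$ is a nonzero bounded functional on $C^r$, and from $\nu(\Lo_{f,\phi}g)=c\,\nu(g)$ for $g\in C^r$ it follows that $c$ is an eigenvalue of $(\Lo_{f,\phi}|_{C^r})^{\ast}$, hence $c\le\rho_r=|\lambda|$. On the other hand, integrating $|\lambda|\,|\varphi|\le\Lo_{f,\phi}|\varphi|$ against $\nu$ and using $\int|\varphi|\,d\nu>0$ (full support, $\varphi\ne0$) gives $|\lambda|\le c$. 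Therefore $c=|\lambda|$, and then $\int(\Lo_{f,\phi}|\varphi|-|\lambda||\varphi|)\,d\nu=0$ forces the nonnegative continuous integrand to vanish identically, proving $\Lo_{f,\phi}|\varphi|=|\lambda|\,|\varphi|$. If $\varphi(x_0)=0$ this eigenrelation and positivity of the weights propagate $\varphi\equiv0$ along the dense set $\bigcup_{n}f^{-n}\{x_0\}$, a contradiction, so $|\varphi|$ is bounded below by some $a>0$ and above by some $b$. Finally, from $\Lo_{f,\phi}^n|\varphi|=|\lambda|^n|\varphi|$ one gets $a\,\Lo_{f,\phi}^n\mathbf{1}\le\Lo_{f,\phi}^n|\varphi|\le b\,|\lambda|^n$, hence $\rho_0\le|\lambda|=\rho_r$, which together with Step~1 gives $\rho_0=\rho_r$; your final comparison argument then yields $\dim\ker=1$. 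I encourage you to replace Step~3 with this chain, which avoids any appeal to a $C^0$ eigenfunction and removes both the circularity and the unjustified compactness.
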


\begin{corollary}\label{medida}
Let $f:M \rightarrow M$ be a $C^{r}-$local diffeomorphism on a compact and connected manifold $M$, such that $\{f^{-n}(x) : n \geq 0\}$ is dense in $M$ for all $x \in M$, and let $\phi \in C^{r}(M , \R)$ be a continuous real function. If $\mathcal{L}_{f, \phi}|_{C^{r}}$ has the spectral gap property then there exists a unique probability $\nu_{\phi}$ on $M$ such that $ (\mathcal{L}_{f, \phi}|_{C^{r}})^{\ast}\nu_{\phi} = \rho(\mathcal{L}_{f, \phi}|_{C^{r}})\nu_{\phi}$. Moreover, $supp(\nu_{\phi}) = M$.
\end{corollary}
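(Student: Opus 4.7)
The plan is to handle existence, uniqueness, and full support separately, leveraging the positivity of $\mathcal{L}_{f,\phi}$ together with the spectral gap supplied by hypothesis.

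For existence, I would apply a standard fixed-point argument to the normalised dual action on probabilities. The map $T:\mathcal{P}(M)\to\mathcal{P}(M)$ defined by $T(\mu):=\mathcal{L}^{\ast}_{f,\phi}\mu/\mathcal{L}^{\ast}_{f,\phi}\mu(\mathbf{1})$ is weak-$\ast$ continuous on the weak-$\ast$ compact convex set $\mathcal{P}(M)$ (well defined because $\mathcal{L}_{f,\phi}\mathbf{1}>0$), so Schauder--Tychonoff produces a fixed point $\nu_\phi$ satisfying $\mathcal{L}^{\ast}_{f,\phi}\nu_\phi=\lambda\nu_\phi$ for some $\lambda>0$. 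To identify $\lambda$ with $\rho:=\rho(\mathcal{L}_{f,\phi}|_{C^{r}})$, I would pair $\nu_\phi$ against the strictly positive eigenfunction $h_\phi\in C^{r}$ supplied by Lemma \ref{Lemaxi}; since $h_\phi$ is bounded away from zero, $\int h_\phi\,d\nu_\phi>0$, and comparing $\int \mathcal{L}_{f,\phi}h_\phi\,d\nu_\phi$ with the two eigenvalue relations forces $\lambda=\rho$.

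For uniqueness, I would invoke the spectral-gap decomposition $C^{r}=\mathbb{C}h_\phi\oplus H$, where $H$ is closed, $\mathcal{L}_{f,\phi}$-invariant, and $\rho(\mathcal{L}_{f,\phi}|_{H})<\rho$. For any probability $\nu$ satisfying the dual relation and any $g\in H$, iteration gives $\int g\,d\nu=\rho^{-n}\int \mathcal{L}^{n}_{f,\phi}g\,d\nu$, which decays exponentially, so $\nu$ annihilates $H$. Writing $\mathbf{1}=\alpha h_\phi+k$ with $k\in H$ and using $\nu(\mathbf{1})=1$ pins down $\int h_\phi\,d\nu=1/\alpha$, hence any two probability dual eigenvectors agree on $C^{r}$, and by density of $C^{r}$ in $C^{0}$ they coincide as Borel measures.

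For full support, I would argue by contradiction. If some nonempty open $U\subset M$ has $\nu_\phi(U)=0$, choose an open $V$ with $\overline{V}\subset U$ and a continuous $g\geq 0$ with $g\equiv 1$ on $V$ and $\mathrm{supp}(g)\subset U$, so $\int g\,d\nu_\phi=0$. Iterating the dual relation yields $\int \mathcal{L}^{n}_{f,\phi}g\,d\nu_\phi=0$ for every $n$, and since $\mathcal{L}^{n}_{f,\phi}g\geq 0$ the open sets $Z_n:=\{\mathcal{L}^{n}_{f,\phi}g>0\}$ are $\nu_\phi$-null. The density-of-preimages hypothesis supplies, for every $x\in M$, some $n_x$ with $f^{-n_x}(x)\cap V\neq\emptyset$, forcing $\mathcal{L}^{n_x}_{f,\phi}g(x)>0$; hence $M=\bigcup_{n\geq 0}Z_n$ is a countable union of null sets, contradicting $\nu_\phi(M)=1$. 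The step I expect to be the most delicate is this one, because the density-of-preimages assumption is only pointwise: the key observation making it work is that positivity of $\mathcal{L}^{n_x}_{f,\phi}g(x)$ requires only a single preimage in $V$, so the covering $M=\bigcup_n Z_n$ plus countable subadditivity is enough, without promoting the hypothesis to topological exactness.
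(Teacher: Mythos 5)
Your proof is correct. The paper itself defers the argument to \cite{BC21}, but its Section~3.4 remark indicates the same overall structure: produce a dual eigenprobability, identify its eigenvalue with $\rho(\mathcal{L}_{f,\phi}|_{C^r})$, and exploit the strictly positive eigenfunction $h_\phi$ from Lemma~\ref{Lemaxi}. Where you diverge slightly is in the existence step: the paper's remark invokes Mazur's separation theorem to obtain a probability with dual eigenvalue $\rho(\mathcal{L}_{f,\phi}|_{C^0})$ and then uses Lemma~\ref{Lemaxi}'s equality $\rho(\mathcal{L}_{f,\phi}|_{C^0})=\rho(\mathcal{L}_{f,\phi}|_{C^r})$, whereas you run Schauder--Tychonoff on the normalised dual action $T(\mu)=\mathcal{L}^{\ast}_{f,\phi}\mu/(\mathcal{L}^{\ast}_{f,\phi}\mu)(\mathbf{1})$ and pin down the eigenvalue by pairing against $h_\phi$; these are interchangeable and both rely on the strict positivity $\mathcal{L}_{f,\phi}\mathbf{1}>0$. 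Your uniqueness step (any dual eigenprobability annihilates the complementary spectral subspace $H$, since $\rho^{-n}\|\mathcal{L}^n_{f,\phi} g\|_{C^0}\to 0$ for $g\in H$, and is thus determined on $\mathbb{C}h_\phi\oplus H=C^r$, hence on $C^0$ by density) and your full-support step (the open sets $Z_n=\{\mathcal{L}^n_{f,\phi} g>0\}$ are $\nu_\phi$-null yet cover $M$ by density of pre-orbits) are both complete; in particular your observation that pointwise density of $\{f^{-n}(x):n\ge 0\}$ is enough --- a single preimage in $V$ already forces $\mathcal{L}^{n_x}_{f,\phi}g(x)>0$, and countable subadditivity finishes --- is exactly right and avoids any unnecessary upgrade to topological exactness.
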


\begin{remark}
Given $\Lo_{f,\phi}|_{C^{r}}$ with the spectral gap property, by Lemma \ref{Lemaxi} there exists a unique $h_{\phi} \in C^{r}(M , \C)$ such that $\Lo_{f,\phi} h_{\phi}=\rho(\Lo_{f,\phi}) h_{\phi}$ and $\int h_{\phi} d\nu_{\phi} = 1$. Moreover, denote the $f-$invariant probability $h_{\phi}\nu_{\phi}$ by $\mu_{\phi}$. Note that $supp(\nu_{\phi}) = M$ and $h_{\phi} > 0$, thus, by this lemma, $supp(\mu_{\phi}) = M.$
\end{remark}

\begin{lemma}\label{Lemapress}\label{GapAnalt}
Let $f:M \rightarrow M$ be a $C^{r}-$local diffeomorphism on a compact and connected manifold $M$, such that $\{f^{-n}(x) : n \geq 0\}$ is dense in $M$ for all $x \in M$ and $f$ admits generating partition by domains of injectivity, and let $\phi \in C^{r}(M , \R)$ be a real function. If $\Lo_{f,s\phi}|_{C^{r}}$ has spectral gap for a given $s \in \R$, then:

(i) $P_{top}(f,s\phi)=\log \rho(\Lo_{f,s\phi}|_{C^{r}})$ and $\mu_{s\phi}$ is an equilibrium state of $f$ with respect to $s\phi$;

(ii) $\R \ni t\mapsto P(f,t\phi)$ is analytic on $s$.
\end{lemma}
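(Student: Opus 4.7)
The plan is to handle (i) via Rokhlin's formula applied to the measure $\mu_{s\phi}$ produced by the spectral gap, and then to bootstrap (ii) from the analytic dependence of the leading eigendata on the potential, provided by Proposition \ref{analy}.

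For (i), I would start from the eigendata furnished by the spectral gap. By Lemma \ref{Lemaxi} and Corollary \ref{medida}, setting $\rho := \rho(\mathcal{L}_{f,s\phi}|_{C^{r}})$, there is a probability $\nu_{s\phi}$ with $(\mathcal{L}_{f,s\phi})^{\ast}\nu_{s\phi} = \rho\,\nu_{s\phi}$ and a strictly positive $h_{s\phi} \in C^{r}$ with $\mathcal{L}_{f,s\phi}h_{s\phi}=\rho h_{s\phi}$, normalized so that $\mu_{s\phi} := h_{s\phi}\nu_{s\phi}$ is $f$-invariant. Unwinding the eigenmeasure relation against $\um_{A}$ for an invertibility domain $A$ gives the Jacobian formula $J_{\nu_{s\phi}}(f) = \rho\, e^{-s\phi}$, and a Radon--Nikodym computation yields $J_{\mu_{s\phi}}(f) = \rho\, e^{-s\phi}\,(h_{s\phi}\circ f)/h_{s\phi}$. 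Since $f$ admits a generating partition by invertibility domains, Rokhlin's formula applies to $\mu_{s\phi}$: the coboundary $\log(h_{s\phi}\circ f/h_{s\phi})$ integrates to zero by $f$-invariance, leaving
$$h_{\mu_{s\phi}}(f) + s\!\int \phi\,d\mu_{s\phi} = \log\rho,$$
so $\log\rho$ is realized in the variational principle, and thus $P_{top}(f,s\phi) \geq \log\rho$. For the opposite inequality I would use the standard Ledrappier--Walters argument: for any $f$-invariant probability $\mu$, restrict to the generating partition by invertibility domains, write $h_{\mu}(f) + s\int\phi\,d\mu - \log\rho$ via Rokhlin's formula as $\int\log\bigl(J_{\mu}(f)/(\rho e^{-s\phi})\bigr)d\mu$, and apply Jensen's inequality together with the pointwise identity $\sum_{f(y)=x} J_{\nu_{s\phi}}(f)(y)^{-1}\equiv 1$ (which is exactly the eigenmeasure equation evaluated on $\um$) to conclude that this quantity is $\leq 0$. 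Uniqueness of $\mu_{s\phi}$ as an equilibrium state would then follow from the one-dimensionality of the leading eigenspace in Lemma \ref{Lemaxi}.

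For (ii), Proposition \ref{analy} tells us that $SG \subset C^{r}(M,\C)$ is open and that $\phi \mapsto \rho(\mathcal{L}_{f,\phi}|_{C^{r}})$ is analytic on $SG$. The linear map $t \mapsto t\phi$ from $\R$ to $C^{r}(M,\R)$ is analytic and sends $s$ into $SG$, so there is an open interval $I\ni s$ with $t\phi \in SG$ for every $t \in I$. Applying (i) pointwise on $I$ gives $P(f,t\phi) = \log\rho(\mathcal{L}_{f,t\phi}|_{C^{r}})$; since $\rho>0$, this is a composition of analytic maps and hence analytic in $t$.

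The main obstacle I anticipate is the reverse inequality $P_{top}(f,s\phi) \leq \log\rho$ in step (i): producing the eigendata and computing Jacobians is routine, but ruling out that some exotic $f$-invariant probability outperforms $\mu_{s\phi}$ in the variational principle is where the structural hypotheses of the lemma (strong transitivity via dense preimages, and a generating partition by invertibility domains so that Rokhlin's formula and the Jensen argument can be invoked against an arbitrary invariant probability) really earn their keep. Part (ii), by contrast, is essentially a formal consequence of openness of $SG$ plus part (i).
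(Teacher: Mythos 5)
Your strategy for the lemma — eigendata from the spectral gap, a Jacobian/Rokhlin computation to show $\log\rho$ is attained and is an upper bound, and then openness of $SG$ plus Proposition~\ref{analy} for analyticity — is the standard Ruelle--Perron--Frobenius route. The paper itself gives no in-text proof (it only asserts the argument carries over from~\cite{BC21}), so there is nothing to compare literally, but this is clearly the intended approach, and part (ii) is correct as written.

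There is, however, a concrete error in the Jensen step of part (i). The pointwise identity you invoke, $\sum_{f(y)=x} J_{\nu_{s\phi}}(f)(y)^{-1}\equiv 1$, is false: since $J_{\nu_{s\phi}}(f)=\rho\, e^{-s\phi}$, that sum equals $\rho^{-1}\mathcal{L}_{f,s\phi}\um(x)$, which is identically $1$ only if $\um$ is the leading eigenfunction, and in general it is $h_{s\phi}$, not $\um$, that plays this role. The eigenmeasure equation evaluated on $\um$ gives only the integral statement $\int \mathcal{L}_{f,s\phi}\um\, d\nu_{s\phi}=\rho$, which does not normalize the fibers pointwise. The correct pointwise identity uses the Jacobian of the \emph{invariant} measure: with $J_{\mu_{s\phi}}(f)=\rho\, e^{-s\phi}\,(h_{s\phi}\circ f)/h_{s\phi}$ one has $\sum_{f(y)=x} J_{\mu_{s\phi}}(f)(y)^{-1}\equiv 1$, precisely because $\mathcal{L}_{f,s\phi}h_{s\phi}=\rho\, h_{s\phi}$. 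You should run the Gibbs/Jensen comparison against $J_{\mu_{s\phi}}$, not $J_{\nu_{s\phi}}$. Your conclusion survives unchanged because $\log\bigl(J_{\mu_{s\phi}}(f)/J_{\nu_{s\phi}}(f)\bigr)=\log h_{s\phi}\circ f-\log h_{s\phi}$ is a coboundary, which integrates to zero against any $f$-invariant probability $\mu$, so $\int\log\bigl(J_\mu(f)/J_{\nu_{s\phi}}(f)\bigr)d\mu=\int\log\bigl(J_\mu(f)/J_{\mu_{s\phi}}(f)\bigr)d\mu\le 0$. But as stated the Jensen step does not go through, and the parenthetical justification you gave for the identity is not valid.
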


Let's prove it now the absence of the spectral gap property after the transition; in fact, we will establish a relation between the spectral gap property of the transfer operator and the strict convexity of the topological pressure function. The proof uses the ideas of the respective result in \cite{BC21}.

\begin{proposition}\label{propat}
Let $F \in \mathcal{D}^{r}$ be for $r \geq 2$. If $\Lo_{F,s\phi^{c}}$ has the spectral gap property on $C^{r-1}$, for some $s \in \R$, then the topological pressure function $t \mapsto P(t)$ is strictly convex in a neighbourhood of $s$. In particular, $\Lo_{F , t\phi^{c}}|_{C^{r-1}}$ does not have spectral gap property for all $t \geq  t_0$.
\end{proposition}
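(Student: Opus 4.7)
The plan is to convert the spectral gap hypothesis at $s$ into strict convexity of the pressure in a neighbourhood of $s$, via the classical identification of $P''$ with a non-degenerate asymptotic variance. First I would use that $SG$ is open (Proposition \ref{analy}), so spectral gap holds on an open interval $I \ni s$, and by Lemma \ref{GapAnalt}(ii) the map $t \mapsto P(t) = \log\rho(\mathcal{L}_{F, t\phi^c}|_{C^{r-1}})$ is real-analytic on $I$, with equilibrium state $\mu_{t\phi^c}$ analytically varying in $t$ (Proposition \ref{analy}).

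Next, using the analytic spectral decomposition $\mathcal{L}_{F,t\phi^c} = \rho(t)\Pi_t + R_t$ on $I$ (Kato perturbation theory together with spectral gap), the first two derivatives of $P$ at $s$ are given by the standard moment/cumulant formulas
\[
P'(s) = \int \phi^c \, d\mu_{s\phi^c}, \qquad P''(s) = \sigma_s^2 := \lim_{n\to\infty} \frac{1}{n}\int \left(S_n\phi^c - nP'(s)\right)^{\!2} d\mu_{s\phi^c} \ge 0,
\]
and by Livsic-type rigidity for expansive systems with the specification property, both of which are granted by $F \in \mathcal{D}^r$, the vanishing $\sigma_s^2 = 0$ happens if and only if $\phi^c$ is cohomologous to a constant $c$ with respect to $F$. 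I would then rule this out by exhibiting two $F$-invariant probabilities with different integrals of $\phi^c$. First, $\Leb_{\mathbb{T}^d} \times \eta$ is $F$-invariant thanks to items (1) and (3) of the definition of $\mathcal{D}^r$, and $\int \phi^c \, d(\Leb\times\eta) = -\int_{\mathbb{T}^d}\!\int \log|Df_x(y)|\,d\eta(y)\,d\Leb(x) = 0$. Second, by item (2) and the fact that $h_{top}(F) = \log\deg(F) > \log\deg(g) = h_{top}(g)$, any measure of maximum entropy $\mu_0$ of $F$ (which exists by expansivity and upper semi-continuity of entropy) satisfies $h_{\mu_0}(F) > h_{top}(g)$; combined with the Margulis--Ruelle inequality $h_{\mu_0}(F) \le h_{top}(g) + \max\{0,\lambda^c(\mu_0)\}$ this forces $\lambda^c(\mu_0) = -\int \phi^c d\mu_0 > 0$, contradicting cohomology to a constant. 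Thus $\sigma_s^2 > 0$, and by analyticity $P'' > 0$ on a neighbourhood of $s$, giving strict convexity there.

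For the final assertion, Proposition \ref{L3} gives $P \equiv h_{top}(g)$ on $[t_0, +\infty)$, so $P$ is not strictly convex near any $t \ge t_0$; the first part of the proposition then forces $\mathcal{L}_{F,t\phi^c}|_{C^{r-1}}$ to fail the spectral gap property for all such $t$. I expect the main obstacle to be the second step: the Parry--Pollicott style derivation of the variance formula and the accompanying Livsic rigidity need to be carried out in the higher-dimensional skew-product setting, but once the standing topological properties of $F \in \mathcal{D}^r$ (expansiveness, strong transitivity, generating partitions by invertibility domains, periodic specification) are in hand, the one-dimensional argument of \cite{BC21} should transfer essentially verbatim.
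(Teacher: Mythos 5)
Your proposal is correct and follows the same overall strategy as the paper: openness of the spectral gap set plus Lemma~\ref{GapAnalt} gives analyticity of $P$ near $s$, Nagaev/Kato perturbation theory identifies $P''(s)$ with the asymptotic variance $\sigma_s^2$ for the observable $\phi^c-\int\phi^c\,d\mu_{s\phi^c}$, and the vanishing $\sigma_s^2=0$ is equivalent to $\phi^c$ being cohomologous to a constant (a.e.\ with respect to the full-support equilibrium state, hence everywhere by continuity). Where you diverge is in how you rule out the cohomological degeneracy. The paper first pins down the constant to be $0$ using the measure $\Leb\times\eta$ from item (3) of the definition of $\mathcal{D}^r$, and then argues that the coboundary relation would force the fiber map $f_{x_0}$ over a fixed point of $g$ to have all Lyapunov exponents zero, hence degree one, contradicting $\deg(F)>\deg(g)$. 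You instead observe that cohomology to a constant would force every $F$-invariant measure to have the same central Lyapunov exponent, and you exhibit two with different ones: $\Leb\times\eta$ with $\lambda^c=0$ and the measure of maximal entropy $\mu_0$, for which $h_{\mu_0}(F)=\log\deg(F)>\log\deg(g)=h_{top}(g)$ combined with the skew-product form of Margulis--Ruelle forces $\lambda^c(\mu_0)>0$. Your route is a bit cleaner: it avoids having to justify the implication ``all exponents zero $\Rightarrow$ invertible'' for the fiber map, and instead uses only the MME and Ruelle's inequality, both already available in the paper's framework. One small point worth spelling out (which the paper does explicitly and you leave implicit) is the passage from the a.e.\ cohomological identity delivered by Nagaev's method to a genuine everywhere identity: this hinges on the continuity of $\phi^c$ and the $C^{r-1}$ solution $u$ together with $\operatorname{supp}(\mu_{s\phi^c})=\mathbb{T}^d\times\mathbb{S}^1$, which is exactly where Corollary~\ref{medida} and the remark following Lemma~\ref{Lemaxi} are needed.
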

\begin{proof}

By Proposition \ref{analy}, spectral gap holds for any small perturbation of $\Lo_{F , s\phi^{c}}|_{C^{r-1}}$ and such operator varies analytically with the potential. Let $\lambda_{\phi}$ be denoting $\rho(\Lo_{f,\phi}|_{C^{r-1}})$ and fix $\psi \in C^{r-1}$. Then the function $\Upsilon(t):=\dfrac{\lambda_{s\phi^{c} + t \psi}}{\lambda_{s\phi^{c}}}$ is analytic for $t \in \R$ in a small neighborhood of zero.

By Nagaev's method we have $\sigma^2=\Upsilon''(0)$,
where $\sigma^2:=\sigma^2_{f,s\phi^{c}}(\psi)$ is the variance of the \textit{Central Limit Theorem} with respect to the map $F$, probability $\mu_{s\phi^{c}}$ and observable $\psi$. Moreover $\sigma = 0$ if, only if,  there exists $a \in \R$ and $u \in C^{r-1}$ such that 
$$\psi(x)= a + u\circ f(x) - u(x),$$
 for $\mu_{s\phi^{c}}-\text{a.e. } x$ (see \cite[Lecture 4]{S12} for more details). 

  In particular, we take $\psi = \phi^{c} - \int \phi^{c} d\mu_{s\phi^{c}}$. Now define the following pressure function $G(t) := P_{top}( f , s\phi^{c} + t\psi)$, then we have
 $$G(t) = P(t + s) - t\int \phi^{c} d\mu_{s\phi^{c}},$$
  for all $t \in \R$, thus $P''(s)=G''(0)$. Furthermore, for $t \in \R$ close enough to zero we have that $\mathcal{L}_{f , s\phi^{c} + t\psi}$ has the spectral gap property and thus $G(t) = \log \lambda_{s\phi^{c} + t\psi}$, by Lemma \ref{Lemapress}. Hence:
$$
\sigma^{2} = \sigma^2_{f,s\phi^{c}}(\psi)= \Upsilon''(0) = \dfrac{ \dfrac{d^{2}\,\lambda_{s\phi^{c}  + t \psi}}{d^{2} t}|_{t = 0} }{\lambda_{s\phi^{c}} } = \big(G'(0)\big)^{2} + G''(0).
$$

\;

\noindent As $F$ is expansive, it follows from \cite{W92} that $G'(0) = \int \psi \,d\mu_{s\phi^{c}} = 0$. Then we conclude that $\sigma^{2} = G''(0)= P''(s)$.

Suppose by contradiction that $P''(s) = 0$. Hence, by Nagaev's method, there exists $a \in \R$ and $u \in C^{r-1}
$ such that 

$$-\phi^{c}(x , y) = \log |Df_{x}(y)|= a + u\circ F(x,y) - u(x,y) , \text{ for } \mu_{s\phi^{c}}-\text{a.e. } (x,y),$$
where $\mu_{s\phi^{c}}$ is the equilibrium state obtained via the spectral gap property, which we know has full support. Therefore the Birkhoff time average  $\frac{1}{n} S_n \phi^{c}$ converges uniformly to $-a$. Consequently, $a$ is the unique central Lyapunov exponent of $F$. Observe that $a= 0$ because, by definition of $\mathcal{D}^{r}$, $F$ has some central Lyapunov exponent zero. Hence $\log |Df_{x}(y)|= u\circ F(x,y) - u(x,y)$. In particular the unique Lyapunov exponent of $f_{0} : \Sc^{1} \rightarrow \Sc^{1}$ is zero, which implies  that $f_{0}$ is invertible. This is absurd because $deg(F) > deg(g)$. 

We conclude that $P''(s) = G''(0) > 0$. In fact, since $P$ is analytic then $P''(t)>0$ for $t$ close to $s$. This implies that $P$ is strictly convex in a neighbourhood of $s$.

Lastly, by Proposition \ref{L3} the topological pressure function $t \to P(t)$ is constant in $[t_{0} , +\infty)$. Therefore $\Lo_{F , t\phi^{c}}|_{C^{r-1}}$ does not have spectral gap property for all $t \geq  t_0$.
\end{proof}

\

\subsubsection{Spectral gap before of transition}

Our next goal is show that $\mathcal{L}_{F , t\phi^{c}|C^{r-1}}$ has spectral gap property for $t < t_{0}$.

First, we observe that for local diffeomorphisms with dense pre-images, if the transfer operator is quasi-compact then it has the spectral gap property. The proof is analogous to the respective result in \cite{BC21}, 

\begin{proposition}\label{LemaEss}
Let $f:M \rightarrow M$ be a $C^{r}-$local diffeomorphism  on a compact and connected manifold $M$, such that $\{f^{-n}(x) : n \geq 0\}$ is dense in $M$ for all $x \in M$, and let $\phi \in C^{r}(M , \R)$ be a continuous real function. If $\Lo_{f,\phi}|_E$ is quasi-compact, then it  has the spectral gap property.
\end{proposition}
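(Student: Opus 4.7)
My plan is to upgrade quasi-compactness of $\Lo := \Lo_{f,\phi}|_E$ to the spectral gap property in three steps: first, identify $\rho := \rho(\Lo)$ as a peripheral eigenvalue whose geometric eigenspace is one-dimensional and generated by a positive function $h$; second, rule out every other eigenvalue on the peripheral circle $\{|z|=\rho\}$; and third, deduce algebraic simplicity of $\rho$. Combined with $\rho_{ess}(\Lo) < \rho$ given by quasi-compactness, this is exactly the spectral gap property.

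The first step is brief: the spectral radius always lies in $sp(\Lo)$, and quasi-compactness upgrades this to $\rho$ being an eigenvalue of finite algebraic multiplicity; Lemma \ref{Lemaxi} then yields a positive eigenfunction $h$ bounded away from zero together with $\dim\ker(\Lo - \rho I) = 1$.

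For the second step, I would suppose $\Lo\varphi = \rho e^{i\theta}\varphi$ with $\theta \not\equiv 0 \pmod{2\pi}$. Lemma \ref{Lemaxi} applied to the peripheral eigenvalue gives $\Lo|\varphi| = \rho|\varphi|$, hence $|\varphi| = c h$ for some $c > 0$, so $u := \varphi/h$ satisfies $|u| \equiv c$. Conjugating $\Lo$ by $h$ produces the normalized Markov operator $\widetilde{\Lo}g(x) := \rho^{-1} h(x)^{-1} \Lo(hg)(x) = \sum_{f(y)=x} p(y)\, g(y)$, where the weights $p(y) := \rho^{-1}e^{\phi(y)}h(y)/h(f(y)) \geq 0$ sum to $1$ over each fibre, and the eigenvalue equation reads $\widetilde \Lo u = e^{i\theta} u$. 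Strict convexity of the closed disk $\{|z|\leq c\}$ applied to
\[
e^{i\theta}u(x) = \sum_{f(y)=x} p(y)\,u(y)
\]
forces $u(y) = e^{i\theta}u(x)$ for every $y \in f^{-1}(x)$, i.e. $u \circ f = e^{-i\theta} u$ on $M$; iterating, $u \circ f^n = e^{-in\theta} u$. Density of $\{f^{-n}(x_0): n \geq 0\}$ combined with connectedness of $M$ yields topological mixing of $f$, and continuity of the non-constant $u$ together with this mixing forces the Koopman eigenvalue $e^{-i\theta}$ to equal $1$. Hence $\theta \equiv 0 \pmod{2\pi}$, which rules out any other peripheral eigenvalue.

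For the third step, if $\psi$ is a generalized eigenvector with $(\Lo-\rho I)\psi = \alpha h$, $\alpha \neq 0$, then the same conjugation gives $\widetilde \Lo(\psi/h) = \psi/h + \alpha/\rho$, so $\widetilde \Lo^n(\psi/h) = \psi/h + n\alpha/\rho$ is unbounded in $C^0$, contradicting $\|\widetilde\Lo^n\|_{C^0} \leq 1$ for a Markov operator. Thus $\rho$ is algebraically simple and the spectral gap property follows. The main obstacle I anticipate is step two: making precise the classical fact that continuous Koopman eigenfunctions on topologically mixing systems must be trivial, in our specific setting using only the combination of strict convexity in the disk, continuity of $u$, and density of preimages. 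Everything else combines Lemma \ref{Lemaxi} with standard spectral-theoretic facts about quasi-compact operators.
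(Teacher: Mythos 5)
Your steps 1 and 3 are sound: step~1 correctly combines quasi-compactness with Lemma~\ref{Lemaxi} to place $\rho := \rho(\Lo)$ as an eigenvalue with one-dimensional eigenspace spanned by a positive $h$, and the Jordan-block exclusion in step~3, via iterating the normalized Markov operator on a generalized eigenvector and comparing with $\|\widetilde{\Lo}^n\|_{C^0}\le 1$, is the standard argument and is correct.

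The gap lies in step~2, exactly where you flagged uncertainty, but not for the reason you suggest. The implication you invoke --- that density of $\{f^{-n}(x_0):n\ge 0\}$ together with connectedness of $M$ yields topological mixing --- is false as a purely topological statement: for an irrational rotation of $\Sc^1$ the backward orbit of every point is dense and $\Sc^1$ is connected, yet the map is not topologically mixing and carries nonconstant continuous Koopman eigenfunctions. (Quasi-compactness fails there, but you are quoting the mixing implication as a topological fact independent of the operator, and that fact does not hold.) You do not actually need mixing; your own ingredients already close the argument. From $\widetilde{\Lo}u=e^{i\theta}u$ with $|u|\equiv 1$, strict convexity gives $u(y)=e^{i\theta}u(x)$ for every $y\in f^{-1}(x)$, hence $\widetilde{\Lo}(u^k)=e^{ik\theta}u^k$ for every $k\ge 1$, so every $\rho e^{ik\theta}$ is a peripheral eigenvalue of $\Lo$. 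Quasi-compactness forces the peripheral spectrum to be finite, so $e^{i\theta}$ must be a root of unity, say of order $m$. Then $u$ maps the dense set $\bigcup_n f^{-n}(x_0)$ into the finite set $\{e^{ij\theta}u(x_0):0\le j<m\}$; by continuity $u(M)$ lies in this finite set, and connectedness of $M$ forces $u$ to be constant, whence $e^{i\theta}=1$. With that replacement your proof is complete.
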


The following results tell us that before the phase transition $t_0$, the topological pressure function is strictly decreasing.

\begin{lemma}\label{L7}
 If $F \in \mathcal{D}^{r}$, with $r \geq 2$, then the function $\R \ni t\mapsto P(t)$ is strictly decreasing in $(-\infty,t_0)$.
\end{lemma}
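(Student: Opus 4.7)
The plan is to combine three facts already at our disposal: the convexity of $t \mapsto P(t)$, its monotonicity, and the precise information about $P(t_0)$ coming from the proof of Proposition \ref{L3}.

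First, I would observe that
$$P(t) = \sup_{\mu \in \mathcal{M}_1(F)} \{h_\mu(F) - t\,\lambda^c(\mu)\}$$
is a pointwise supremum of affine functions of $t$, hence convex on $\R$. Together with Lemma \ref{L2}, which ensures that $\lambda^c(\mu) \geq 0$ for every $F$-invariant probability $\mu$, every such affine map is non-increasing in $t$; consequently $P$ itself is non-increasing.

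Second, by the very definition
$$t_0 = \inf\{t \in (0,1] : P(t) = h_{top}(g)\}$$
and the lower bound $P(t) \geq h_{top}(g)$ established in the proof of Proposition \ref{L3}, I would deduce that $P(t) > h_{top}(g)$ for every $t < t_0$, while $P(t_0) = h_{top}(g)$ (this last equality being a consequence of the continuity of $P$ as a convex function on $\R$, or directly from the fact that $P$ is constant on $[t_0, 1]$).

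Finally, suppose for contradiction that there exist $a < b < t_0$ with $P(a) = P(b)$. Since $P$ is convex and non-increasing, this forces $P$ to be constant on $[a, +\infty)$. Indeed, for any $c > b$ one may write $b = \lambda a + (1-\lambda) c$ with $\lambda = \frac{c-b}{c-a} \in (0,1)$, and then convexity gives
$$P(b) \leq \lambda P(a) + (1-\lambda) P(c);$$
combined with $P(c) \leq P(b) = P(a)$ (monotonicity), this yields $P(c) = P(a)$. Taking in particular $c = t_0$, we get $P(t_0) = P(a) > h_{top}(g) = P(t_0)$, a contradiction. Therefore $P$ is strictly decreasing on $(-\infty, t_0)$, as claimed.

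There is essentially no hard step here: the argument is purely one-variable convex analysis once the three ingredients (convexity, monotonicity via Lemma \ref{L2}, and the value $P(t_0) = h_{top}(g)$ via Proposition \ref{L3}) are in place. The one point deserving mild care is making sure that $P(t_0)$ equals $h_{top}(g)$ and that the infimum defining $t_0$ is attained, which is why the continuity of $P$ (automatic for convex functions on $\R$) is invoked.
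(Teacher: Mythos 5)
Your proof is correct, but it takes a genuinely different route from the paper's. The paper argues as follows: if $P$ were constant on some open interval $(a,b)\subset(-\infty,t_0)$, pick $t\in(a,b)$; expansiveness of $F$ gives an equilibrium state $\mu_{t\phi^c}$, and comparing $P(t+w)=P(t)$ for small $w<0$ with the variational inequality forces $\lambda^c(\mu_{t\phi^c})=0$ (using Lemma~\ref{L2}); then the Margulis--Ruelle inequality pins the constant value of $P$ to $h_{top}(g)$, so $a\geq t_0$, a contradiction. Your argument bypasses equilibrium states and Margulis--Ruelle entirely: once monotonicity (via Lemma~\ref{L2}) and convexity are in hand, a one-variable convex-analysis argument shows that a plateau starting before $t_0$ must propagate to $t_0$ itself, where you already know $P(t_0)=h_{top}(g)<P(a)$. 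Both routes lean on the same two ingredients from earlier in the section -- the non-negativity of $\lambda^c$ and the fact that $P\equiv h_{top}(g)$ from $t_0$ onward -- but your version is more elementary, needing only the abstract shape of $P$ rather than the existence of equilibrium states. The paper's route is slightly heavier but yields the extra structural fact that any equilibrium state on a hypothetical flat stretch must have vanishing central Lyapunov exponent, which is thematically in line with the phase-transition picture being developed. One small point worth stating explicitly in your write-up: the strict inequality $P(a)>h_{top}(g)$ for $a<t_0$ requires $t_0>0$, which holds because $P(0)=h_{top}(F)>h_{top}(g)$; you invoke this implicitly via the definition of $t_0\in(0,1]$, but it is the reason the contradiction closes.
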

\begin{proof}
Suppose by contradiction that there is an interval $(a,b)\in (-\infty,t_0)$ such that $P(t)=C$, for all $t\in (a,b)$. Fix $t \in (a , b)$ and take $w<0$ small such that $t+w\in (a,b)$. As $F$ is expansive there exists $\mu_{t\phi^{c}}$ equilibrium state of $F$ with respect to $t\phi^{c}$ and we have
\begin{align*}
h_{\mu_{t\phi^{c}}}(F)-(t+w)\lambda^{c}(\mu_{t\phi^{c}})\leq P(t+w)=P(t)=h_{\mu_{t\phi^{c}}}(F)-t\lambda^{c}(\mu_{t\phi^{c}})
\end{align*}
which means that $w\lambda^{c}(\mu_{t\phi^{c}})\geq 0$, and $\lambda^{c}(\mu_{t\phi^{c}})\leq 0$. However $\lambda^{c}(\mu_{t\phi^{c}})\geq 0$ by Lemma \ref{L2},  thus  $\lambda^{c}(\mu_{t\phi^{c}})=0$, and so $P(t)=h_{\nu}(F)$, for all $t\in(a,b)$ and $h_{\nu}(F)=\max\{h_{\mu}(F); \lambda^{c}(\mu)=0\}$. In conclusion, using Margulis-Ruelle inequality, we have:

\begin{align*}
h_{top}(g)=\sum\limits_{i=1}^{d} \lambda_{i}(g, Leb)\geq h_{\nu}(F)\geq h_{Leb\times\eta}(F)\geq h_{top}(g).
\end{align*}
In that way, we have $a\geq t_0$, by definition of $t_0$, which is a contradiction.
\end{proof}

\

For the following lemmas, we need to estimate the essential spectral radius. The results \cite[Theorems 1 and 2]{CL97}, in our context, summarise to:

\begin{theorem}\label{Lat}
Assume that $f:M\rightarrow M$ is any smooth covering and $\phi \in C^{r}(M, \R)$ then
$$ \rho_{ess}(\Lo_{f,\phi}|_{C^k}) \leq \exp\big[\sup_{\mu \in \mathcal{M}_{e}(f)}\{h_\mu(f)+\int\phi \text{d}\mu-k\lambda_{\min}(f , \mu)\}\big] \text{ and }$$
$$ \rho(\Lo_{f,\phi}|_{C^k}) \leq \exp\big[\sup_{\mu \in \mathcal{M}_{1}(f)}\{h_\mu(f)+\int\phi \text{d}\mu\}\big],$$
for $k=0,1,\dots, r.$ Where $\lambda_{\min}(f , \mu)$ denotes the smallest Lyapunov-Oseledec exponent of $\mu$.
\end{theorem}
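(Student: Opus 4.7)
My plan is to view the statement as a direct specialization of the main results of \cite{CL97}, so the proof amounts to checking that our hypotheses place us inside their framework and then rewriting their general $C^k$-transfer operator bounds as the two inequalities displayed here. Since $f$ is a $C^r$ local diffeomorphism of a compact manifold and $\phi \in C^r(M,\R)$, the operator $\Lo_{f,\phi}$ acts boundedly on each $C^k$ with $0 \le k \le r$, which is exactly the setup of \cite{CL97}.

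For the spectral radius estimate, I would first observe that $\rho(\Lo_{f,\phi}|_{C^k}) \le \rho(\Lo_{f,\phi}|_{C^0})$ via a standard Lasota-Yorke type comparison of the $C^k$ and $C^0$ norms together with Gelfand's formula $\rho(\Lo) = \lim_n \|\Lo^n\|^{1/n}$. Then $\frac{1}{n}\log\|\Lo_{f,\phi}^n \um\|_{C^0}$ converges to $P_{top}(f,\phi)$, and the classical variational principle $P_{top}(f,\phi) = \sup_{\mu \in \mathcal{M}_1(f)}\{h_\mu(f)+\int\phi\,d\mu\}$ yields the second inequality.

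For the essential spectral radius, the core ingredient from \cite{CL97} is a Lasota-Yorke inequality of the form
$$\|\Lo_{f,\phi}^n \varphi\|_{C^k} \le A_n \|\varphi\|_{C^k} + B_n \|\varphi\|_{C^{k-1}},$$
where $A_n$ is built from $\sup_x \sum_{f^n(y)=x} e^{S_n\phi(y)} \|Df^n(y)^{-1}\|^{k}$; one reads off $k$ factors of the inverse derivative because the $C^k$ seminorm of a composition contracts by the $k$-th power of the smallest infinitesimal expansion along the orbit. Combining Hennion's lemma with the compact embedding $C^k \hookrightarrow C^{k-1}$ gives $\rho_{ess}(\Lo_{f,\phi}|_{C^k}) \le \lim_n A_n^{1/n}$. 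A subadditive/Oseledets argument, together with the variational principle applied to the modified potential $\phi - k\log\|Df^{-1}\|$ along the direction realizing $\lambda_{\min}$, identifies $\lim \frac{1}{n}\log A_n$ with $\sup_{\mu \in \mathcal{M}_e(f)}\{h_\mu(f)+\int\phi\,d\mu - k\lambda_{\min}(f,\mu)\}$, which is the desired bound.

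The main obstacle is precisely this last identification of $\lim \frac{1}{n}\log A_n$ with the supremum over ergodic measures, because $\lambda_{\min}(f,\cdot)$ is only measurable and not continuous, so the usual thermodynamic variational arguments require care (in particular, ergodicity is needed in order to make sense of a single value $\lambda_{\min}(f,\mu)$, which is why the first supremum is taken over $\mathcal{M}_e(f)$ rather than $\mathcal{M}_1(f)$). All of this is worked out in \cite{CL97}, so in the paper I would simply invoke their theorems rather than reproduce the technical estimates.
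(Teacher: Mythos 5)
Your proposal matches the paper exactly: the paper also treats Theorem~\ref{Lat} as a direct specialization of \cite[Theorems 1 and 2]{CL97}, offering no independent proof and simply noting that those results ``in our context, summarise to'' the displayed inequalities. The Lasota--Yorke/Hennion sketch you add is a reasonable gloss of what happens inside \cite{CL97}, but it is not part of the paper's argument, whose proof consists of the citation itself.
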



In particular:

\begin{lemma}\label{essest}
 If $F \in \mathcal{D}^{r}$  and $k=0,1,\dots, r$  then
 $ \rho_{ess}(\Lo_{F,t\phi^{c}}|_{C^k}) < e^{P(t)},$ for all $ t < t_{0}$. 
\end{lemma}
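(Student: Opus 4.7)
The strategy is to combine the essential spectral radius bound from Theorem \ref{Lat} with the strict gap $P(t)>h_{top}(g)$ for $t<t_{0}$ (available from Lemma \ref{L7} together with the definition of $t_{0}$) and the fact that, for $F\in\mathcal{D}^{r}$, the base map $g$ forces a uniform positive lower bound on the non-central Lyapunov exponents.

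The first step is to apply Theorem \ref{Lat} to the potential $t\phi^{c}$ and observe that $\int t\phi^{c}\,d\mu=-t\lambda^{c}(\mu)$, which gives
$$\rho_{ess}(\Lo_{F,t\phi^{c}}|_{C^{k}})\;\leq\;\exp\!\Big[\sup_{\mu\in\mathcal{M}_{e}(F)}\big\{h_{\mu}(F)-t\lambda^{c}(\mu)-k\lambda_{\min}(F,\mu)\big\}\Big].$$
Since $F(x,y)=(g(x),f_{x}(y))$, the Oseledets spectrum of any $F$-ergodic $\mu$ consists of the central exponent $\lambda^{c}(\mu)$ together with the positive exponents $\lambda_{1},\ldots,\lambda_{d}$ of the expanding linear endomorphism $g$, and Lemma \ref{L2} gives $\lambda^{c}(\mu)\ge 0$. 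Thus, writing $\alpha:=\min_{i}\lambda_{i}>0$, we have $\lambda_{\min}(F,\mu)=\min\{\alpha,\lambda^{c}(\mu)\}\ge 0$.

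The key step is to prove that the supremum
$$A(t):=\sup_{\mu\in\mathcal{M}_{e}(F)}\big\{h_{\mu}(F)-t\lambda^{c}(\mu)-k\lambda_{\min}(F,\mu)\big\}$$
is strictly less than $P(t)$ for every $t<t_{0}$. Fix a parameter $\delta\in(0,\alpha)$ and split the supremum into two regimes. In the regime $\lambda^{c}(\mu)\ge\delta$ one has $\lambda_{\min}(F,\mu)\ge\delta$, and the variational principle immediately yields $h_{\mu}(F)-t\lambda^{c}(\mu)-k\lambda_{\min}(F,\mu)\le P(t)-k\delta$. In the complementary regime $0\le\lambda^{c}(\mu)<\delta$, the Margulis--Ruelle inequality recalled in Section~\ref{prelim} gives $h_{\mu}(F)\le h_{top}(g)+\max\{0,\lambda^{c}(\mu)\}< h_{top}(g)+\delta$, hence $h_{\mu}(F)-t\lambda^{c}(\mu)-k\lambda_{\min}(F,\mu)\le h_{top}(g)+(1+|t|)\delta$.

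By Lemma \ref{L7} and the definition of $t_{0}$, the gap $P(t)-h_{top}(g)$ is strictly positive for every $t<t_{0}$, so one may pick $\delta>0$ small enough (depending on $t$ and on $k\ge 1$) so that both upper bounds fall strictly below $P(t)$; taking the supremum then yields $A(t)<P(t)$, and therefore $\rho_{ess}(\Lo_{F,t\phi^{c}}|_{C^{k}})<e^{P(t)}$. The main obstacle lies in the second regime, where the term $-k\lambda_{\min}$ contributes nothing and one is forced to rely on Margulis--Ruelle together with the \emph{strict} thermodynamical inequality $P(t)>h_{top}(g)$; this is precisely where the hypothesis $t<t_{0}$ (as opposed to $t\le t_{0}$) enters in an essential way, and also why the argument requires $k\ge 1$.
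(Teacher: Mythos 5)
Your proof is correct and reaches the same conclusion via Theorem~\ref{Lat}, but it takes a genuinely different route from the paper's. The paper splits the ergodic measures according to \emph{which} Oseledets exponent realises $\lambda_{\min}(F,\mu)$: when $\lambda_{\min}(F,\mu)=\lambda^{c}(\mu)$ the quantity inside the supremum becomes $h_\mu(F)-(t+k)\lambda^{c}(\mu)\le P(t+k)$, and $P(t+k)<P(t)$ follows from the strict decrease of $P$ on $(-\infty,t_0)$ (Lemma~\ref{L7}) together with $P\equiv h_{top}(g)<P(t)$ past $t_0$; when $\lambda_{\min}(F,\mu)$ is a base exponent one simply subtracts the fixed positive quantity $k\min_i\lambda_i$ from $P(t)$. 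You instead introduce an auxiliary threshold $\delta$ and split on whether $\lambda^{c}(\mu)\ge\delta$ or $\lambda^{c}(\mu)<\delta$, handling the small-exponent regime with Margulis--Ruelle together with the strict gap $P(t)>h_{top}(g)$. Your version avoids invoking the strict monotonicity of $P$ (only the weaker fact $P(t)>h_{top}(g)$ for $t<t_0$ is used) at the cost of an extra $\varepsilon$-argument and a $t$-dependent choice of $\delta$, whereas the paper's decomposition is shorter and produces bounds that do not require tuning a parameter. You are also right to flag that both arguments genuinely require $k\ge 1$: for $k=0$ the Campbell--Latushkin estimate only gives $\rho_{ess}\le e^{P(t)}$, so the strict inequality claimed in the lemma cannot hold for $k=0$; the paper in fact only ever applies the lemma with $k=r-1\ge 1$.
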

\begin{proof}
Let $\mu$ be a $F-$invariant and ergodic probability. Suppose that $\lambda_{\min}(\mu)=\lambda^{c}(\mu)$. Then we have
\begin{align*}
h_{\mu}(F)+ t\int\phi^{c} d\mu-k\lambda_{\min}(\mu)=h_{\mu}(F)-(t+k)\lambda^{c}(\mu)\leq P(t+k)<P(t)
\end{align*}
for $t<t_0$, by the previous lemma. Suppose now that $\lambda_{\min}(\mu) \neq \lambda^{c}(\mu)$, in this case $\lambda_{\min}(\mu) = \lambda_{1}(g , Leb)> 0$. Then:
$$
h_{\mu}(F)+ t\int\phi^{c} d\mu-k\lambda_{\min}(\mu)=h_{\mu}(F)-t\lambda^{c}(\mu)-k\lambda_{\min}(\mu)\leq
$$
$$ P(t)-k\lambda_{1}(g , Leb)<P(t).
$$
The result now follows from the previous Theorem.
\end{proof}

We start proving spectral gap for non-positive $t$.
\begin{lemma}
If $F \in \mathcal{D}^{r},$ with $r \geq2$, then $\Lo_{F,t\phi^{c}}|_{C^{r-1}}$ has spectral gap property for all $t\leq 0.$
\end{lemma}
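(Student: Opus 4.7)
My plan is to derive the spectral gap property from quasi-compactness via Proposition \ref{LemaEss}, whose hypothesis on dense pre-orbits holds because $F$ is topologically conjugate to a uniformly expanding map. Quasi-compactness of $\Lo_{F,t\phi^{c}}|_{C^{r-1}}$ reduces to the strict inequality $\rho_{\mathrm{ess}}(\Lo_{F,t\phi^{c}}|_{C^{r-1}}) < \rho(\Lo_{F,t\phi^{c}}|_{C^{r-1}})$. The upper bound is immediate from Lemma \ref{essest}: since Proposition \ref{L3} ensures $t_{0}>0$, every $t\leq 0$ satisfies $t<t_{0}$, which gives $\rho_{\mathrm{ess}}(\Lo_{F,t\phi^{c}}|_{C^{r-1}}) < e^{P(t)}$. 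The delicate step is the matching lower bound $\rho(\Lo_{F,t\phi^{c}}|_{C^{r-1}}) \geq e^{P(t)}$.

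For this lower bound, I would construct a conformal probability on $\mathbb{T}^{d}\times \Sc^{1}$. Using the expansivity of $F$, the pre-orbits $F^{-n}\{x\}$ form $(n,\epsilon)$-separated sets whenever $\epsilon$ lies below the expansivity constant, so the quantity $\Lo_{F,t\phi^{c}}^{n}(\mathbf{1})(x)=\sum_{F^{n}(y)=x}e^{tS_{n}\phi^{c}(y)}$ compares multiplicatively with the separated-set partition function that defines $P(t)$. This yields $\rho(\Lo_{F,t\phi^{c}}|_{C^{0}})=e^{P(t)}$ (with Theorem \ref{Lat} supplying the opposite inequality). Extracting a weak-$\ast$ accumulation point of the probabilities $\nu_{n}:=(\Lo_{F,t\phi^{c}}^{\ast})^{n}\delta_{x_{0}}/\Lo_{F,t\phi^{c}}^{n}(\mathbf{1})(x_{0})$ produces $\nu$ with $\Lo_{F,t\phi^{c}}^{\ast}\nu=e^{P(t)}\nu$. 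Since $\nu(\mathbf{1})=1$, the functional $\nu$ restricts to a nonzero eigenfunctional of $\Lo_{F,t\phi^{c}}^{\ast}$ on $(C^{r-1})^{\ast}$ with eigenvalue $e^{P(t)}$, placing $e^{P(t)}\in \mathrm{sp}(\Lo_{F,t\phi^{c}}|_{C^{r-1}})$ and closing the gap.

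The hypothesis $t\leq 0$ enters mainly to keep the above estimates transparent: at $t=0$ the leading eigenpair is simply $(\deg(F),\mathbf{1})$, with $\deg(F)=e^{h_{top}(F)}=e^{P(0)}$, and for $t<0$ the weights $e^{t\phi^{c}}=|Df_{x}|^{|t|}$ stay bounded below by $1$ on the part of phase space where $F$ expands in the central direction, which is enough to sustain the lower estimate on $\Lo^{n}(\mathbf{1})$. The main obstacle I foresee is not the $C^{0}$-level inequality itself but transferring it to $C^{r-1}$; I sidestep a bootstrap regularity argument for the eigenfunction by working on the dual side, where the conformal eigenfunctional automatically restricts through the continuous inclusion $C^{r-1}\hookrightarrow C^{0}$. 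Once quasi-compactness is in hand, Proposition \ref{LemaEss} promotes it to the spectral gap property.
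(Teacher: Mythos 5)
Your approach is genuinely different from the paper's, and the difference is worth spelling out. The paper never attempts to prove $\rho(\Lo_{F,t\phi^{c}}|_{C^{r-1}})=e^{P(t)}$ directly for every $t\le 0$; instead it first handles $t=0$ (where $\Lo_{F,0}\mathbf{1}=\deg(F)\mathbf{1}$ gives $\rho=\deg(F)=e^{P(0)}$ for free, so Lemma~\ref{essest} already yields quasi-compactness), defines $\tau_1:=\inf\{t<0:\Lo_{F,t\phi^{c}}|_{C^{r-1}}\text{ has spectral gap}\}$, and shows $\tau_1=-\infty$ by a connectedness argument: on $(\tau_1,0]$ the equality $\rho=e^{P(\cdot)}$ holds by Lemma~\ref{Lemapress}, and upper semicontinuity of spectral components propagates the inequality $\rho(\Lo_{F,\tau_1\phi^{c}}|_{C^{r-1}})\ge e^{P(\tau_1)}$ to the endpoint, which combined with Lemma~\ref{essest} and Proposition~\ref{LemaEss} produces a spectral gap at $\tau_1$ and a contradiction. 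That route needs no construction of conformal measures outside the spectral-gap regime.

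The gap in your version is precisely in the construction of the conformal eigenfunctional. Taking a weak-$\ast$ accumulation point of $\nu_n=(\Lo_{F,t\phi^{c}}^{\ast})^{n}\delta_{x_0}/\Lo_{F,t\phi^{c}}^{n}(\mathbf{1})(x_0)$ does \emph{not} automatically give an eigenfunctional: one has $\Lo_{F,t\phi^{c}}^{\ast}\nu_n=\bigl(\Lo^{n+1}\mathbf{1}(x_0)/\Lo^{n}\mathbf{1}(x_0)\bigr)\nu_{n+1}$, and to pass to a limit you would need the ratio $\Lo^{n+1}\mathbf{1}(x_0)/\Lo^{n}\mathbf{1}(x_0)$ to converge \emph{and} the accumulation points of $\nu_n$ and $\nu_{n+1}$ to coincide; neither is a consequence of expansivity plus specification. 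The standard remedy is a Schauder--Tychonoff (or Mazur separation) argument, which does produce a probability $\nu$ with $\Lo_{F,t\phi^{c}}^{\ast}\nu=\lambda\nu$ for some $\lambda>0$; but then you still have to show $\lambda=e^{P(t)}$, which for an expanding map is done via Bowen's bounded-distortion estimate for Hölder potentials — an estimate that is unavailable here, since $F$ is only topologically conjugate to an expanding map (the conjugacy is merely $C^0$) and $\phi^{c}$ has unbounded Birkhoff distortion near the neutral set where $|Df_x|=1$. So the step ``this yields $\rho(\Lo_{F,t\phi^{c}}|_{C^{0}})=e^{P(t)}$ and a conformal eigenmeasure at that eigenvalue'' is exactly where your argument needs a substantive, non-obvious input, and as sketched it does not close. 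Once you \emph{have} such a $\nu$, your restriction argument from $(C^0)^\ast$ to $(C^{r-1})^\ast$ via the continuous dense inclusion is fine, and Lemma~\ref{essest} plus Proposition~\ref{LemaEss} finishes. I would either supply the missing Gibbs/conformal-measure construction (which is likely harder than the lemma you are proving) or switch to the paper's continuity-from-$t=0$ strategy, which uses only the semicontinuity of the spectral radius under analytic perturbation and the trivial base case.
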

\begin{proof}

\textit{Claim: $\Lo_{F,0}|_{C^{r-1}}$ has spectral gap.}

\

\noindent In fact, 
by Lemma \ref{essest}:
$$\rho_{ess}(\Lo_{F,0}|_{C^{r-1}})<  e^{P(0)}=e^{h_{top}(F)}=\deg(F)=\rho(\Lo_{F,0}|_{C^{r-1}}).$$
Hence $\Lo_{f,0}|_{C^{r-1}}$ is quasi-compact and by Lemma \ref{LemaEss} it has spectral gap property.

\

Let $\tau_1:=\inf\{t<0; \Lo_{F,t\phi^{c}}|_{C^{r-1}} \text{ has the spectral gap property}\}$ be. Suppose by contradiction that $\tau_1 > -\infty.$ Since the spectral gap property is open (see Corollary \ref{analy}) then $\Lo_{F,\tau_{1}\phi^{c}}|_{C^{r-1}}$ has no spectral gap property.

\

\textit{Claim: $\rho(\Lo_{F,\tau_1 \phi^{c}}|_{C^{r-1}})=e^{P(\tau_1)}$.}

\

\noindent In fact, for all $t \in (\tau_1,0]$ the transfer operator $\Lo_{t\phi^{c}}|_{C^{r-1}}$ has the spectral gap property, then $\rho(\Lo_{t\phi^{c}}|_{C^{r-1}})=e^{P(t)}$ and $(\tau_1,0]\ni t \mapsto \rho(\Lo_{-t\log|Df|}|_{C^{-1}})$ is decreasing, by Lemma \ref{Lemapress} and Lemma \ref{L7}. 
Now suppose by contradiction there is a $t_n \searrow \tau_1$ such that $\rho(\Lo_{\tau_1\phi^{c}}|_{C^{r-1}}) < \rho(\Lo_{t_n\phi^{c}}|_{C^{r-1}})-\varepsilon$. By semi-continuity of spectral components (see e.g. \cite{K95}) we have  $$\rho(\Lo_{\tau_1\phi^{c}}|_{C^{r-1}}) \geq \sup_{t\in(\tau_1,0]}\rho(\Lo_{t\phi^{c}}|_{C^{r-1}})=\sup_{t\in(\tau_1,0]}e^{P(t)}=e^{P(\tau_1)}.$$

\noindent On the other hand, by Theorem \ref{Lat} $\rho(\Lo_{\tau_1\phi^{c}}|_{C^{r-1}})\leq e^{P(\tau_1)}$. Thus we conclude $\rho(\Lo_{\tau_1 \phi^{c}|}|_{C^{r-1}})=e^{P(\tau_1)}$.

\

Therefore, 
$$\rho(\Lo_{\tau_1\phi^{c}}|_{C^{r-1}})=e^{P(\tau_1)}> \rho_{ess}(\Lo_{\tau_1\phi^{c}}|_{C^{r-1}}),$$
and by Lemma \ref{LemaEss} $\Lo_{\tau_1\phi^{c}}|_{C^{r-1}}$ has the spectral gap property, contradicting the minimality of $\tau_1.$
\end{proof}

Given $F \in \mathcal{D}^{r},$ with $r \geq2$, define analogously
$\tau_2:=\sup\{t>0; \Lo_{F,t\phi^{c}}|_{C^{r-1}}$ has the spectral gap property $\}$, note that $\tau_2$ exists and is at most $t_0$ by Proposition \ref{propat}.
Analogously, it holds $\rho(\Lo_{F,\tau_2\phi^{c}}|_{C^{r-1}})=e^{P(\tau_2)}$. Since the spectral gap property is open, $\Lo_{F,\tau_2\phi^{c}}|_{C^{r-1}}$ has no spectral gap property and thus $\rho(\Lo_{F,\tau_2\phi^{c}}|_{C^{r-1}})=\rho_{ess}(\Lo_{F,\tau_2\phi^{c}}|_{C^{r-1}})$ by Lemma \ref{LemaEss}.
\begin{lemma}
$\tau_2=t_0$
\end{lemma}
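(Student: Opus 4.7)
The proof will mirror the template of the preceding lemma, approaching $\tau_2$ from below rather than $\tau_1$ from above. The inequality $\tau_2 \leq t_0$ is immediate from Proposition \ref{propat}, which rules out the spectral gap property for every $t \geq t_0$. For the reverse inequality I would argue by contradiction, assuming $\tau_2 < t_0$. Since spectral gap is open (Proposition \ref{analy}), the supremum $\tau_2$ is not attained, that is, $\Lo_{F,\tau_2\phi^{c}}|_{C^{r-1}}$ does not have the spectral gap property.

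The key step is to establish $\rho(\Lo_{F,\tau_2\phi^{c}}|_{C^{r-1}})=e^{P(\tau_2)}$. For every $t\in(0,\tau_2)$ the spectral gap property holds, so by Lemma \ref{Lemapress} one has $\rho(\Lo_{F,t\phi^{c}}|_{C^{r-1}})=e^{P(t)}$, and by Lemma \ref{L7} this is strictly decreasing in $t$. The upper bound $\rho(\Lo_{F,\tau_2\phi^{c}}|_{C^{r-1}}) \leq e^{P(\tau_2)}$ is supplied directly by Theorem \ref{Lat}. For the lower bound I would pick $t_n\nearrow \tau_2$ with $t_n\in(0,\tau_2)$: each $e^{P(t_n)}$ is an isolated eigenvalue of finite multiplicity of $\Lo_{F,t_n\phi^{c}}|_{C^{r-1}}$, the operator family is analytic in $t$ in operator norm (since $\phi^{c}\in C^{r-1}$), and semi-continuity of isolated spectral components (see e.g.\ \cite{K95}) forces any accumulation point of these eigenvalues to lie in $sp(\Lo_{F,\tau_2\phi^{c}}|_{C^{r-1}})$. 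Continuity of $t\mapsto P(t)$ then gives $e^{P(\tau_2)}\in sp(\Lo_{F,\tau_2\phi^{c}}|_{C^{r-1}})$, hence $\rho(\Lo_{F,\tau_2\phi^{c}}|_{C^{r-1}})\geq e^{P(\tau_2)}$.

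To close the argument, since $\tau_2<t_0$ Lemma \ref{essest} applies and yields
\[
\rho_{ess}(\Lo_{F,\tau_2\phi^{c}}|_{C^{r-1}})<e^{P(\tau_2)}=\rho(\Lo_{F,\tau_2\phi^{c}}|_{C^{r-1}}),
\]
so the operator is quasi-compact, and Proposition \ref{LemaEss} then produces the spectral gap property at $\tau_2$. This contradicts the fact that $\tau_2$ is not attained. Therefore $\tau_2\geq t_0$, and combining with $\tau_2\leq t_0$ we conclude $\tau_2=t_0$.

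The main point of delicacy is the lower bound $\rho(\Lo_{F,\tau_2\phi^{c}}|_{C^{r-1}})\geq e^{P(\tau_2)}$, because general spectral radius is only upper semi-continuous; but here the leading eigenvalues $e^{P(t_n)}$ are isolated and of finite multiplicity for $t_n<\tau_2$, which is exactly the setting where standard perturbation theory forces their limits to survive in the spectrum. This is the same tool invoked in the proof of the preceding lemma, so no new machinery is needed.
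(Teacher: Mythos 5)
Your proposal is correct and follows essentially the same route as the paper: bound $\tau_2 \leq t_0$ via Proposition \ref{propat}, show $\rho(\Lo_{F,\tau_2\phi^c}|_{C^{r-1}})=e^{P(\tau_2)}$ by combining upper semi-continuity of the spectral radius (Theorem \ref{Lat}) with lower semi-continuity of isolated spectral components from Kato, note that openness of the spectral gap property rules out spectral gap at $\tau_2$, and then use Lemma \ref{essest} together with Proposition \ref{LemaEss} to derive a contradiction if $\tau_2<t_0$. The only cosmetic difference is that the paper phrases the final contradiction as $e^{P(\tau_2)}=\rho_{ess}<e^{P(\tau_2)}$ (packaging ``no spectral gap $\Rightarrow$ not quasi-compact $\Rightarrow$ $\rho_{ess}=\rho$'' first), whereas you derive quasi-compactness and then spectral gap at $\tau_2$ to contradict the openness observation; the two are logically equivalent.
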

\begin{proof}
Suppose by absurd that $\tau_{2} < t_{0}$. By Lemma \ref{essest}, we get:
$$e^{P(\tau_2)}=\rho(\Lo_{\tau_2\phi^{c}}|_{C^{r-1}})=\rho_{ess}(\Lo_{\tau_2\phi^{c}}|_{C^{r-1}})< e^{P(\tau_2)}.$$
Which is absurd.
\end{proof}

From definition of $\tau_2$ we get:

\begin{corollary}
If $F \in \mathcal{D}^{r},$ with $r \geq2$, then $\Lo_{F,t\phi^{c}}|_{C^{r-1}}$ has spectral gap for all $t<t_0$.
\end{corollary}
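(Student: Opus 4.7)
The plan is to mirror, for an arbitrary $t\in(0,t_0)$, the contradiction strategy already used to handle the parameters $\tau_1$ and $\tau_2$ in the preceding lemmas. Let $S := \{t \in \mathbb{R} : \mathcal{L}_{F,t\phi^c}|_{C^{r-1}} \text{ has spectral gap}\}$. By Proposition~\ref{analy} the set $S$ is open, and from the previous results we already know that $(-\infty,0] \subset S$ and that $\sup(S \cap (0,+\infty)) = \tau_2 = t_0$. So the only thing left is to upgrade the equality of the supremum to the inclusion $(0,t_0) \subset S$, since combined with $(-\infty,0] \subset S$ this gives exactly the statement of the corollary.

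Suppose, for contradiction, that some $s \in (0,t_0)$ lies outside $S$, and set $t^\ast := \inf\{t \in (0,t_0) : t \notin S\}$. Openness of $S$ together with $0 \in S$ forces $t^\ast > 0$, and the same openness forces $t^\ast \notin S$ (otherwise a whole neighborhood of $t^\ast$ would lie in $S$, contradicting the definition of the infimum). In particular $(0,t^\ast) \subset S$, so the new ``bad'' parameter $t^\ast$ can be approximated from the left by parameters at which the spectral gap already holds. The goal is to show that the spectral gap in fact survives in the limit, producing the desired contradiction with $t^\ast \notin S$.

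To carry this out, pick $t_n \nearrow t^\ast$ with $t_n \in S$. Lemma~\ref{Lemapress} gives $\rho(\mathcal{L}_{F,t_n\phi^c}|_{C^{r-1}}) = e^{P(t_n)}$ for every $n$, and continuity of the pressure function yields $e^{P(t_n)} \to e^{P(t^\ast)}$. Since $t \mapsto \mathcal{L}_{F,t\phi^c}|_{C^{r-1}}$ is norm-continuous (indeed analytic, by Proposition~\ref{analy}), the classical upper semi-continuity of the spectrum from perturbation theory (see \cite{K95}) supplies $\rho(\mathcal{L}_{F,t^\ast\phi^c}|_{C^{r-1}}) \geq \limsup_n \rho(\mathcal{L}_{F,t_n\phi^c}|_{C^{r-1}}) = e^{P(t^\ast)}$. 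The reverse inequality is the universal upper bound of Theorem~\ref{Lat}, so $\rho(\mathcal{L}_{F,t^\ast\phi^c}|_{C^{r-1}}) = e^{P(t^\ast)}$. Because $t^\ast < t_0$, Lemma~\ref{essest} then yields $\rho_{ess}(\mathcal{L}_{F,t^\ast\phi^c}|_{C^{r-1}}) < e^{P(t^\ast)} = \rho(\mathcal{L}_{F,t^\ast\phi^c}|_{C^{r-1}})$, so the operator is quasi-compact, and Proposition~\ref{LemaEss} promotes this to the spectral gap property. Hence $t^\ast \in S$, the desired contradiction.

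The only genuinely delicate point is making sure the direction of semi-continuity is the useful one, namely $\rho(\mathcal{L}_{F,t^\ast\phi^c}) \geq \limsup_n \rho(\mathcal{L}_{F,t_n\phi^c})$ rather than the opposite; this is exactly what Kato-style upper semi-continuity delivers, because spectra of norm-continuous families can only ``shrink'' in the limit and not ``grow''. Everything else is bookkeeping of the ingredients already assembled in the section: openness of $S$, the identification $\rho=e^{P}$ on $S$, the essential-radius estimate that keeps a strict gap below $t_0$, and the quasi-compactness-implies-spectral-gap criterion.
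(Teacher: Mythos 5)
Your proof is correct and takes essentially the same route as the paper: locate the first parameter in $(0,t_0)$ at which the spectral gap could fail, show that the spectral radius there still equals $e^{P(\cdot)}$ by combining Lemma~\ref{Lemapress}, Theorem~\ref{Lat}, and the upper semicontinuity of the spectrum from \cite{K95}, and then invoke Lemma~\ref{essest} together with Proposition~\ref{LemaEss} to recover the spectral gap and reach a contradiction. The one improvement worth noting is that your choice $t^\ast := \inf\{t\in(0,t_0): t\notin S\}$ makes the inclusion $(0,t^\ast)\subset S$ automatic, whereas the paper passes from $\tau_2=\sup\{t>0:t\in S\}=t_0$ directly to the corollary via ``from definition of $\tau_2$'', a step that silently assumes $S\cap(0,t_0)$ is an interval; your bookkeeping makes that final passage explicit.
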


The previous Corollary, together with the Proposition \ref{L3}, Lemma \ref{Lemapress} and Proposition \ref{propat}, completes the proof of the Theorem \ref{mainthA}.

\

\section{Applications}\label{ProofsB}

This section is devoted to the proof of the results of multifractal analysis ( Theorem \ref{mainthA}, and Corollary \ref{mainthD}). However, we will need results like the large deviations principle, that have their importance independently.






Throughout this section we will suppose $F \in \mathcal{D}^{r}$.  Our results will  also be valid for $f : \Sc^{1} \rightarrow \Sc^{1}$ a $C^{1}-$local diffeomorphism with $Df$ Holder continuous, non-invertible nor expanding, in view that $f$ is conjugated to an expanding dynamics (\cite{CM86}) and it holds an analogous of Theorem \ref{mainthA} (\cite{BC21}).

\subsection{Large deviations principle}\label{secLDP}


In our context $\lim \frac{1}{n}\log |\frac{\partial F^{n}}{\partial y}(w)| = \lambda^{c}(w)$ for  $\mu_{0}-$almost every $w$, where $\mu_{0}$ is the measure of maximum entropy of $F$. Thus, we are interested in understanding asymptotically 
$$\frac{1}{n}\log \mu_{0}\Big(\{w \in \mathbb{T}^{d} \times \Sc^{1}: \frac{1}{n}\log \Big|\frac{\partial F^{n}}{\partial y}(w)\Big| \in [a , b]\} \Big),$$
with $[a , b]$ intersecting the central Lyapunov spectra 
$$L^{c}(F) := \{\alpha \in \R :  \exists w \in \Sc^{1} \times T^{d} \text{ with } \lim\frac{1}{n}\log \Big|\frac{\partial F^{n}}{\partial y}(w)\Big| = \alpha\}.$$
Note that since  $F$ has specification property and $$\frac{1}{n}\log \Big|\frac{\partial F^{n}}{\partial y}(w)\Big| = \frac{1}{n}\sum_{j = 1}^{n-1}\log\Big|\frac{\partial F}{\partial y}(F^{j}(w))\Big|$$ then the central Lyapunov spectra satisfies
$$L^{c}(F) = \Big\{\int \log\Big|\frac{\partial F}{\partial y}\Big| d\nu : \nu \in \mathcal{M}_{1}(F)\Big\} = \overline{\Big\{\int \log\Big|\frac{\partial F}{\partial y}\Big| d\nu : \nu \in \mathcal{M}_{e}(F)\Big\}}$$ and is a compact interval (see \cite{Daniel}). In fact, $L^{c}(F)$ will have non-empty interior and by Lemma \ref{L2} we have $\inf L^{c}(F) = 0$. 

Define $\lambda^{c}_{\min} := \inf_{t < t_{0}}\lambda^{c}(\mu_{t\phi^{c}})$ and $\lambda^{c}_{\max} := \sup_{t < t_{0}}\lambda^{c}(\mu_{t\phi^{c}})$, where $\mu_{t\phi^{c}}$ is the equilibrium state of $F$ with respect to $t\phi^{c}$ obtained in Lemma \ref{GapAnalt}. We will improve the description of $L^{c}(F)$ using the equilibrium states $\mu_{t\phi^{c}}$:

\begin{lemma}
$L^{c}(F) = [0 , \lambda^{c}_{\max}] $ and $\lambda^{c}_{\min} \in L^{c}(F)$.
\end{lemma}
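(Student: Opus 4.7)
The plan is to combine the identification $L^{c}(F) = \{\int \log|\partial F/\partial y|\,d\nu : \nu \in \mathcal{M}_1(F)\}$, recorded just before the lemma, with the variational description of the pressure $P(t) := P_{top}(F, t\phi^{c}) = \sup_{\mu}(h_\mu(F) - t\lambda^{c}(\mu))$. By Lemma \ref{L2}, every invariant measure satisfies $\lambda^{c}(\mu)\ge 0$, while the product measure $\Leb \times \eta$ (with $\eta$ as in item (3) of the definition of $\mathcal{D}^{r}$) is $F$-invariant and has central Lyapunov exponent $0$; hence $\inf L^{c}(F) = 0$ and $0\in L^{c}(F)$. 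Since $L^{c}(F)$ is already known to be a compact interval, the lemma reduces to proving (a) $\sup L^{c}(F) = \lambda^{c}_{\max}$ and (b) $\lambda^{c}_{\min}\in L^{c}(F)$.

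For claim (a), set $A := \sup\{\lambda^{c}(\mu) : \mu\in\mathcal{M}_1(F)\} = \sup L^{c}(F)$. For every $\mu\in\mathcal{M}_1(F)$ and every $t<0$,
$$\frac{P(t)}{-t} \ge \frac{h_\mu(F) - t\lambda^{c}(\mu)}{-t} \xrightarrow[t\to -\infty]{} \lambda^{c}(\mu),$$
so $\liminf_{t\to -\infty} P(t)/(-t) \ge A$. On the other hand, Theorem \ref{mainthA} provides, for each $t<t_0$, an equilibrium state $\mu_{t\phi^{c}}$ with $P(t) = h_{\mu_{t\phi^{c}}}(F) - t\lambda^{c}(\mu_{t\phi^{c}})$ and $\lambda^{c}(\mu_{t\phi^{c}}) \le \lambda^{c}_{\max}$ by the very definition of $\lambda^{c}_{\max}$. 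Using $h_{\mu_{t\phi^{c}}}(F) \le h_{top}(F)$ we get
$$\frac{P(t)}{-t} \le \frac{h_{top}(F)}{-t} + \lambda^{c}_{\max} \xrightarrow[t\to -\infty]{} \lambda^{c}_{\max}.$$
Combining these two bounds yields $A \le \lambda^{c}_{\max}$; the reverse inequality $\lambda^{c}_{\max} \le A$ is immediate, since each $\mu_{t\phi^{c}}$ is an $F$-invariant probability. Hence $\sup L^{c}(F) = \lambda^{c}_{\max}$, and since $L^{c}(F)$ is a closed interval containing $0$ and attaining its maximum at $\lambda^{c}_{\max}$, we conclude $L^{c}(F) = [0,\lambda^{c}_{\max}]$.

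Claim (b) is then a matter of closedness: $\lambda^{c}_{\min}$ is the infimum of the numbers $\lambda^{c}(\mu_{t\phi^{c}})$ for $t<t_0$, each of which lies in $L^{c}(F)$ as the central Lyapunov exponent of an $F$-invariant probability, and $L^{c}(F)$ is a compact interval. The only non-cosmetic step in the plan is the upper bound on $P(t)/(-t)$, which depends on writing $P(t)$ as a specific sum involving an equilibrium state $\mu_{t\phi^{c}}$; this is precisely where the spectral gap property furnished by Theorem \ref{mainthA} for $t<t_0$ is indispensable. Everything else is routine, once the description of $L^{c}(F)$ as the range of $\mu \mapsto \int \log |\partial F/\partial y|\,d\mu$ and the properties of the pressure proved in the previous section are in hand.
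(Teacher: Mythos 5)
Your proof is correct and follows essentially the same route as the paper: use the variational principle together with the equilibrium states $\mu_{t\phi^{c}}$ furnished by the spectral gap for $t<t_0$, divide by $-t$, and let $t\to-\infty$ to trap $\sup L^{c}(F)$ by $\lambda^{c}_{\max}$; then invoke closedness of $L^{c}(F)$ for $\lambda^{c}_{\min}$. The only cosmetic difference is that you reorganize the estimate as a $\liminf$/$\limsup$ sandwich on $P(t)/(-t)$ rather than bounding $\lambda^{c}(\nu)$ directly for each $\nu$, and you cite Theorem \ref{mainthA} where the paper's chain of reasoning passes through Lemma \ref{GapAnalt}; neither affects the substance.
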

\begin{proof}
Since  $F$ has the specification property, applying \cite{Daniel}, we have that $L^{c}(F) = \{\int \log |\frac{\partial F}{\partial y}| d\nu : \nu \in \mathcal{M}_{1}(F)\}$. In particular, $\lambda^{c}(\mu_{t\phi^{c}}) \in L^{c}(F)$ for all $t < t_{0}$ and $\lambda^{c}_{\max} , \lambda^{c}_{\min} \in L^{c}(F).$
We will show that $\sup L^{c}(F) = \lambda^{c}_{\max}$. Take a probability $\nu \in \mathcal{M}_{1}(F)$ and $t < 0$. Then:
$$
h_{\nu}(F) -  t\int \log \Big|\frac{\partial F}{\partial y}\Big| d\nu \leq h_{\mu_{t\phi^{c}}} - t\int  \log \Big|\frac{\partial F}{\partial y}\Big| d\mu_{t\phi^{c}} \Rightarrow \frac{h_{\nu}(F)}{-t} + \int \log \Big|\frac{\partial F}{\partial y}\Big| d\nu \leq 
$$
$$
\frac{h_{\mu_{t\phi^{c}}}(F)}{-t} + \int \log \Big|\frac{\partial F}{\partial y}\Big| d\mu_{t\phi^{c}} \leq \frac{\log deg(F)}{-t} + \int \log \Big|\frac{\partial F}{\partial y}\Big| d\mu_{t\phi^{c}} ,
$$
taking $t \mapsto -\infty$ we have that $\int \log |\frac{\partial F}{\partial y}| d\nu \leq \sup_{t < t_{0}} \int \log |\frac{\partial F}{\partial y}| d\mu_{t\phi^{c}}$. Thus, $\sup L^{c}(F) = \lambda^{c}_{\max}$ and conclude that $L^{c}(F) = [0 , \lambda^{c}_{\max}] $.
\end{proof}





Define the free energy  $$\mathcal{E}(t):= \limsup_{n \mapsto +\infty} \frac{1}{n}\log \int e^{-tS_{n}\phi^{c} }d\mu_{0},$$ where $t \in \R$ and $S_{n}\psi := \sum_{j=1}^{n-1}\psi \circ F^{j}$ is the usual Birkhoff sum.  In our setting we will prove that the limit above does exist for $t  > - t_{0}$, where $t_{0}$ is the transition parameter, and $\mathcal{E}$ has good properties.

\begin{lemma}\label{prop:free.energy}
For all $t > -t_{0}$ the following limit exists
$$
\cE(t)
	=\lim_{n\to\infty} \frac1n \log \int e^{-t S_n \phi^{c}} \; d\mu_{0}
	= P(-t) -\log deg(F)
$$
Moreover, 
$\cE : (-t_{0} , +\infty ) \rightarrow \R$ is real analytic and strictly convex. \end{lemma}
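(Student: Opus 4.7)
My plan is to reduce the computation of $\cE(t)$ to spectral data of the transfer operator $\Lo_{F,-t\phi^c}$, which by Theorem~\ref{mainthA}(i) has the spectral gap property on $C^{r-1}$ exactly when $-t<t_0$, i.e.\ $t>-t_0$. The starting observation is that $\mu_0$, the maximal entropy measure, coincides with the $\Lo_{F,0}$-conformal probability: since $\Lo_{F,0}\um=\deg(F)\cdot\um$, Lemma~\ref{Lemaxi} forces the leading eigenfunction to be constant, and then $\mu_0$ is the unique probability with $\Lo_{F,0}^*\mu_0=\deg(F)\mu_0$. This gives the duality $\int \Lo_{F,0}^n(g)\,d\mu_0=\deg(F)^n\int g\,d\mu_0$, and combined with the pointwise identity $\Lo_{F,0}^n(e^{-tS_n\phi^c})=\Lo_{F,-t\phi^c}^n(\um)$ (which is immediate from the definition of the transfer operator) I obtain
\begin{equation*}
\int e^{-tS_n\phi^c}\,d\mu_0=\deg(F)^{-n}\int \Lo_{F,-t\phi^c}^n(\um)\,d\mu_0.
\end{equation*}

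Next I would apply the spectral gap decomposition. For $t>-t_0$, set $\lambda_t:=\rho(\Lo_{F,-t\phi^c}|_{C^{r-1}})$ and let $h_t>0$, $\nu_t$ be the corresponding eigenfunction and reference probability (normalised so that $\int h_t\,d\nu_t=1$), whose existence is guaranteed by Lemma~\ref{Lemaxi} and Corollary~\ref{medida}. The spectral gap then yields $\theta_t\in(0,1)$ such that
\begin{equation*}
\Lo_{F,-t\phi^c}^n(\um)=\lambda_t^n\bigl(h_t+R_{t,n}\bigr),\qquad \|R_{t,n}\|_{C^{r-1}}=O(\theta_t^n).
\end{equation*}
Integrating against $\mu_0$ and setting $c_t:=\int h_t\,d\mu_0$, I get $\int \Lo_{F,-t\phi^c}^n(\um)\,d\mu_0=\lambda_t^n(c_t+O(\theta_t^n))$. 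Since $h_t>0$ and $\supp(\mu_0)=\mathbb{T}^d\times\Sc^1$, we have $c_t>0$, so dividing by $\deg(F)^n$, taking $\tfrac{1}{n}\log$ and letting $n\to\infty$ shows that the limit exists and equals $\log\lambda_t-\log\deg(F)$. Finally Lemma~\ref{Lemapress}(i) identifies $\log\lambda_t=P_{top}(F,-t\phi^c)=P(-t)$, which gives $\cE(t)=P(-t)-\log\deg(F)$.

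For the analyticity and strict convexity on $(-t_0,+\infty)$ I would appeal directly to the consequences of spectral gap. Proposition~\ref{analy} shows that $\phi\mapsto\rho(\Lo_{F,\phi}|_{C^{r-1}})$ is analytic on the open set $SG$; since $t\mapsto-t\phi^c$ is affine and lands in $SG$ for every $t>-t_0$, the map $t\mapsto\log\lambda_t$, hence $\cE$, is real analytic there. Strict convexity then follows from strict convexity of $s\mapsto P(s)$ on $(-\infty,t_0)$, which is Proposition~\ref{propat} applied at every $s<t_0$.

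The only technically delicate point I expect is justifying the uniform asymptotic $\Lo_{F,-t\phi^c}^n(\um)=\lambda_t^n(h_t+O(\theta_t^n))$ in a norm strong enough to be integrated against $\mu_0$ and to pass through $\log$; this is ensured by the $C^{r-1}$ spectral gap together with the positivity $h_t>0$ furnished by Lemma~\ref{Lemaxi}, so the remainder $O(\theta_t^n)$ is controlled in sup norm and $c_t>0$ as needed. The remaining steps are essentially mechanical translations of the spectral data of $\Lo_{F,-t\phi^c}$ into the desired properties of $\cE$.
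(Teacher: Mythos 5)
Your proof is correct and follows essentially the same route as the paper: the paper also reduces $\cE(t)$ to $\log\rho(\Lo_{F,-t\phi^c})-\log\rho(\Lo_{F,0})$ via the spectral gap (citing \cite[Prop.~5.2]{BCV16} for exactly the duality-and-asymptotics computation you write out explicitly), then invokes Lemma~\ref{Lemapress} to identify $\log\rho(\Lo_{F,-t\phi^c})=P(-t)$ and the established analyticity and strict convexity of $P$ on $(-\infty,t_0)$. Your unpacking of the $\mu_0=\nu_0$ observation, the pointwise identity $\Lo_{F,0}^n(e^{-tS_n\phi^c})=\Lo_{F,-t\phi^c}^n(\um)$, and the $\lambda_t^n(h_t+O(\theta_t^n))$ expansion is precisely the content of the cited step, so the two arguments coincide in substance.
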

\begin{proof}
The prove that guarantees $$\cE(t)
	=\lim_{n\to\infty} \frac1n \log \int e^{-t S_n \phi^{c}} \; d\mu_{0} = \log\rho(\mathcal{L}_{F,-t\phi^{c}}) - \log\rho(\mathcal{L}_{F , 0})$$ is analogous to \cite[Proposition  5.2]{BCV16}, using that $\mathcal{L}_{F,t\phi^{c}}$ has spectral gap property for $t < t_{0}$ (see Theorem \ref{mainthA}). On the other hand, $\log\rho(\mathcal{L}_{F,t\phi^{c}}) = P(t)$ because $\mathcal{L}_{F,t\phi^{c}}$ has spectral gap property and we can apply Lemma \ref{GapAnalt}.
	Finally, since $P(t)$ is analytic and strictly convex in $(-\infty , t_{0})$ by Theorem \ref{mainthA}, we have finished the proof of the lemma.
 \end{proof}

Since the function
$\cE : (-t_{0} , +\infty ) \rightarrow \R$ is strictly convex it is well defined the ``local"
Legendre transform $I$ given by
\begin{equation*}
I(s)
	:= \sup_{-t_{0} < t < +\infty} \; \big\{ st-\cE(t) \big\}.
\end{equation*}

It is well known that the strict convexity and differentiability of the free energy $\mathcal{E}$ imply that the domain of $I$ is $\{\mathcal{E}'(t) : t >  - st_{0}\}$, and $I$ is strictly convex and non-negative. In fact, it is not hard to check the variational property
$
I (\cE'(t) )
	= t \cE'(t) - \cE(t).
$
Moreover, $I(s)=0$ if and only if $s=\lambda^{c}(\mu_{0})$. Thus
$I$ will be a strictly convex and real analytic function.

\begin{remark}
Given $s < t_{0}$, by Theorem \ref{mainthA} and openness of the spectral gap property, $\mathcal{L}_{F, s\phi^{c} + \psi}$ has the spectral gap property on $C^{r-1}$ for $\psi \in C^{r-1}$ and $||\psi||_{r-1} < \epsilon$. Applying Lemma \ref{GapAnalt} we have that $B(0 , \epsilon) \ni \psi \mapsto P_{top}(F , s\phi^{c} +\psi) $ is analytic.  By \cite{W92}, we have $P'(t) = \int \phi^{c} d\mu_{t\phi^{c}}$.
\end{remark}

It follow from the previous remark that the domain of $I$ is equal to $\{-\int\phi^{c}d\mu_{-t\phi^{c}} : t > t_{0}\} = (\lambda^{c}_{\min} , \lambda^{c}_{\max})$.

We collect all of these statements in the following:

\begin{lemma}\label{lemmaI}
\begin{itemize}
\item[i)] The domain if $I$ is $(\lambda^{c}_{\min} , \lambda^{c}_{\max})$;
\item[ii)] $I$ is a positive, strictly convex function and $I(s)= 0$ if and only $s=\lambda^{c}(\mu_{0})$;
\item[iii)] $I$ is a real analytic function.
\end{itemize}
\end{lemma}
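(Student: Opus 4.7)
The plan is to extract all three claims from the properties of the free energy $\cE$ already established in Lemma \ref{prop:free.energy}, via the standard theory of the Legendre transform of a strictly convex, real analytic function. The single nontrivial input is an explicit formula for $\cE'$; everything else is a mechanical application of Legendre duality.

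First I would compute $\cE'$. Differentiating the identity $\cE(t) = P(-t) - \log \deg(F)$ and using that $P'(s) = \int \phi^c d\mu_{s\phi^{c}}$ for $s < t_{0}$ (which follows from the spectral gap of $\mathcal{L}_{F, s\phi^{c}}$ via Lemma \ref{GapAnalt}, as recorded in the remark just before Lemma \ref{lemmaI}), I obtain
$$
\cE'(t) \;=\; -P'(-t) \;=\; -\int \phi^{c}\, d\mu_{-t\phi^{c}} \;=\; \lambda^{c}(\mu_{-t\phi^{c}})
$$
for all $t \in (-t_{0}, +\infty)$. Since $\cE$ is real analytic and strictly convex on the open interval $(-t_{0}, +\infty)$, the map $\cE'$ is a real analytic strictly increasing diffeomorphism onto an open interval, namely its image. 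Substituting $s = -t$, this image equals $\{\lambda^{c}(\mu_{s\phi^{c}}) : s < t_{0}\}$, whose infimum and supremum are precisely $\lambda^{c}_{\min}$ and $\lambda^{c}_{\max}$ by definition. Standard convex analysis identifies the domain of the Legendre dual of a smooth strictly convex function with the image of its derivative, so the domain of $I$ is $(\lambda^{c}_{\min}, \lambda^{c}_{\max})$, giving (i).

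For (ii) and (iii) I would exploit the variational identity $I(\cE'(t)) = t\cE'(t) - \cE(t)$ already recorded in the paper. Setting $t(s) := (\cE')^{-1}(s)$, which is real analytic by the analytic inverse function theorem, one gets
$$
I(s) \;=\; s\, t(s) - \cE(t(s)),
$$
from which (iii) is immediate. Two differentiations yield $I''(s) = 1/\cE''(t(s)) > 0$, proving strict convexity. Non-negativity is obtained from the variational definition: $I(s) = \sup_{t} (st - \cE(t)) \geq 0\cdot s - \cE(0) = -\cE(0) = 0$, where $\cE(0) = P(0) - \log \deg(F) = h_{\topp}(F) - \log \deg(F) = 0$ because $h_{\topp}(F) = \log \deg(F)$ for our toy model (stated in Section 2.1). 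Finally, $I(\lambda^{c}(\mu_{0})) = I(\cE'(0)) = 0 \cdot \cE'(0) - \cE(0) = 0$, so $\lambda^{c}(\mu_{0})$ is a zero of $I$, and by strict convexity it is the unique one, completing (ii).

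There is no real obstacle here; the only point requiring care is bookkeeping at the boundary. Since the domain $(-t_{0}, +\infty)$ of $\cE$ is open and $\cE'$ is a strict monotone continuous bijection onto its image, the endpoints $\lambda^{c}_{\min}$ and $\lambda^{c}_{\max}$ are genuinely excluded from the domain of $I$, matching the statement of (i). One must also invoke the correct form of the pressure differentiability (via the remark preceding the lemma) to identify $\cE'(t)$ with a central Lyapunov exponent of the corresponding equilibrium state.
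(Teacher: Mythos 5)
Your proof is correct and takes essentially the same route as the paper: the paper justifies Lemma \ref{lemmaI} via the paragraph and remark immediately preceding it, which invoke exactly the same standard Legendre-duality machinery — $\cE'$ maps $(-t_0,+\infty)$ diffeomorphically onto the domain of $I$, the variational identity $I(\cE'(t)) = t\cE'(t) - \cE(t)$, and the formula $P'(s) = \int \phi^{c}\, d\mu_{s\phi^{c}}$ from \cite{W92} together with the spectral gap of Theorem \ref{mainthA} to identify $\cE'(t) = \lambda^{c}(\mu_{-t\phi^{c}})$. Your version simply spells out the steps (analytic inverse function theorem for $t(s)$, $I'' = 1/\cE''>0$, and $\cE(0)=0$ from $h_{\topp}(F)=\log\deg(F)$) that the paper leaves as "well known."
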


 The following figure describes the expected shape of the Legendre transform $I$:
 
 \;

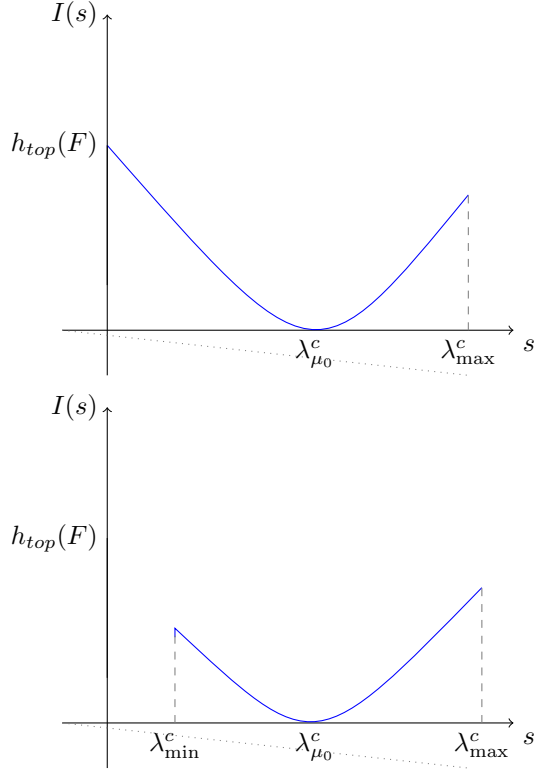
\begin{figure}[h]
 \centering
\begin{tikzpicture}[scale=0.6,domain=0:3]
\draw[->] (-1,0) -- (9,0) node[below right] {$s$};
\draw[->] (0,-1) -- (0,7) node[left] {$I(s)$};

 \draw[dotted,gray] (-1,0) -- (8,-1);
  \draw[blue] (0,4.1) .. controls (4.6,-1.15) .. (8,3);
\draw (8,0) node [below]{$\lambda_{\max}^c$};
\draw (0,4.1) node[left] {$h_{top}(F)$} -- (0,1);
\draw (4.6,0) node[below] {$\lambda_{\mu_0}^c$};
\draw[dashed,color=gray] (8,0) -- (8,3);
\end{tikzpicture} %
    \qquad
\begin{tikzpicture}[scale=0.6,domain=0:3]
\draw[->] (-1,0) -- (9,0) node[below right] {$s$};
\draw[->] (0,-1) -- (0,7) node[left] {$I(s)$};

 \draw[dotted,gray] (-1,0) -- (8,-1);
  \draw[blue] (1.5,1.9) -- (1.5,2.1).. controls (4.6,-.8) .. (8.3,3);
\draw (1.5,0) node[ below]{$\lambda_{\min}^{c}$};
\draw (8.3,0) node [below]{$\lambda_{\max}^c$};
\draw (0,4.1) node[left] {$h_{top}(F)$} -- (0,1);
\draw (4.6,0) node[below] {$\lambda_{\mu_0}^c$};
\draw[dashed,color=gray] (1.5,0) -- (1.5,2.1);
\draw[dashed,color=gray] (8.3,0) -- (8.3,3);
\end{tikzpicture} %
    \caption{ { The Legendre transform $I$}}%
    \label{hattau}%
\end{figure}

The following results are obtained from the Gartner-Ellis theorem (see e.g.~\cite{DZ98,RY08}) as a consequence
of the differentiability of the free energy function.

\begin{proposition}\label{thm:LDP.Ellis}
Given any interval $[a,b]\subset (\lambda^{c}_{\min} , \lambda^{c}_{\max})$ it holds that
$$
\lim_{n\to\infty} \frac1n \log \mu_{0}
	\left(w\in \cS^{1} \times T^{d} : \frac{1}{n}\log |\frac{\partial F^{n}}{\partial y}(w)| \in [a,b] \right)
	=-\inf_{s\in[a,b]} I(s).
$$
\end{proposition}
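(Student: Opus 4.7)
The idea is to apply the Gärtner–Ellis theorem to the sequence of random variables
\[
Z_n(w):=\frac{1}{n}\log\Big|\frac{\partial F^{n}}{\partial y}(w)\Big|=-\frac{1}{n}S_n\phi^{c}(w)
\]
on the probability space $(\mathbb{T}^{d}\times\mathbb{S}^{1},\mu_{0})$. Its scaled cumulant generating function is exactly $\frac{1}{n}\log\int e^{ntZ_n}\,d\mu_0=\frac{1}{n}\log\int e^{-tS_n\phi^{c}}\,d\mu_0$, and by Lemma \ref{prop:free.energy} its pointwise limit as $n\to\infty$ equals the free energy $\mathcal{E}(t)$ for every $t\in(-t_{0},+\infty)$, where $\mathcal{E}$ is moreover real analytic and strictly convex.

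First I would verify the hypotheses of Gärtner–Ellis restricted to this open regime. The point $0$ lies in the interior of the effective domain of $\mathcal{E}$, and $\mathcal{E}$ is differentiable and strictly convex there. By Lemma \ref{prop:free.energy} together with the remark preceding Lemma \ref{lemmaI}, the derivative $\mathcal{E}'$ is a real-analytic bijection from $(-t_{0},+\infty)$ onto $(\lambda^{c}_{\min},\lambda^{c}_{\max})$. Consequently, every $s\in(\lambda^{c}_{\min},\lambda^{c}_{\max})$ is an \emph{exposed point} of the Legendre transform $I$, with exposing hyperplane $t=(\mathcal{E}')^{-1}(s)$ lying in the interior of the effective domain of $\mathcal{E}$.

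Second I would invoke both halves of Gärtner–Ellis for the interval $[a,b]\subset(\lambda^{c}_{\min},\lambda^{c}_{\max})$. The upper bound yields
\[
\limsup_{n\to\infty}\frac{1}{n}\log\mu_{0}(Z_n\in[a,b])\le-\inf_{s\in[a,b]}I(s),
\]
while the lower bound, applied to the open interior $(a,b)$ whose points are all exposed, gives
\[
\liminf_{n\to\infty}\frac{1}{n}\log\mu_{0}(Z_n\in(a,b))\ge-\inf_{s\in(a,b)}I(s).
\]
Since $I$ is real analytic (hence continuous) on the compact interval $[a,b]$ by Lemma \ref{lemmaI}, the infima over $[a,b]$ and over $(a,b)$ coincide, and the two estimates sandwich the desired limit.

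The main obstacle is that $\mathcal{E}$ is only known to exist as a finite limit on the half-line $(-t_{0},+\infty)$, so a naive application of Gärtner–Ellis on all of $\mathbb{R}$ is unavailable. This is precisely what the ``exposing hyperplane inside the interior of the effective domain'' clause in the lower bound is designed to handle, and that clause is satisfied throughout $(\lambda^{c}_{\min},\lambda^{c}_{\max})$ by the strict convexity and analyticity of $\mathcal{E}$ furnished by Lemma \ref{prop:free.energy}.
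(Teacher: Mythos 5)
Your proposal is correct and takes essentially the same route as the paper: the paper simply states that Proposition~\ref{thm:LDP.Ellis} follows from the Gärtner--Ellis theorem given the differentiability of the free energy supplied by Lemma~\ref{prop:free.energy}, and you apply exactly that, spelling out the exposed-points caveat that is needed because $\mathcal{E}$ is only controlled as a limit on the half-line $(-t_{0},+\infty)$.
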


Since $F$ is expansive and has the specification property, applying \cite{Bo74} we have that $\mu_{0}$ is a Gibbs probability with respect to dynamics $F$ and null potential. Thus, applying \cite{Y90} the measure of maximum entropy $\mu_{0}$ satisfies a large deviations principle for all continuous observables, with rate function obtained through thermodynamic quantities. In particular it holds:

\begin{proposition}\label{LDP:thermodynamical}
Given any interval $[a,b]\subset L^{c}(F)$ it holds that
$$
\limsup_{n\to\infty} \frac1n \log \mu_{0}
	\left(w\in \cS^{1} \times T^{d} : \frac{1}{n}\log |\frac{\partial F^{n}}{\partial y}(w)| \in [a,b] \right)
	\le 
	$$
	$$
	- h_{top}(F) + \sup\{h_{\nu}(F) :  \lambda^{c}(\nu) \in [a,b]\}
$$
and
$$
\liminf_{n\to\infty} \frac1n \log \mu_{f,\phi}
	\left(w\in \cS^{1} \times T^{d} : \frac{1}{n}\log |\frac{\partial F^{n}}{\partial y}(w)| \in (a,b) \right)
	\ge
	$$
	$$
	- h_{top}(F) + \sup\{h_{\nu}(F) :  \lambda^{c}(\nu) \in [a,b]\}
$$
\end{proposition}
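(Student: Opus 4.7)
The plan is to derive this proposition as a direct consequence of the Gibbs large deviations machinery of Young \cite{Y90}, once $\mu_0$ is identified as a Gibbs state. Recall from the Toy Model section that any $F \in \mathcal{D}^r$ is expansive and enjoys the periodic specification property, inherited from the topological conjugacy to a uniformly expanding dynamics on the connected manifold $\mathbb{T}^d \times \Sc^1$. Under these two hypotheses, Bowen's theorem \cite{Bo74} guarantees that the (unique) measure of maximum entropy $\mu_0$ is a Gibbs probability associated to the null potential $\phi \equiv 0$, with $P_{top}(F, 0) = h_{top}(F)$.

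Next, I would invoke the level-$2$ large deviations principle of \cite{Y90} for Gibbs measures: for every continuous observable $\psi$ and every closed, respectively open, subset $K$ of $\mathbb{R}$ one has
$$\limsup_{n\to\infty}\frac{1}{n}\log\mu_0\!\left(\Bigl\{w : \tfrac{1}{n}S_n\psi(w) \in K\Bigr\}\right) \le -\inf_{s\in K}J(s),$$
together with the corresponding open-set lower bound, where the rate function is
$$J(s) = h_{top}(F) - \sup\Bigl\{h_\nu(F) : \nu \in \mathcal{M}_1(F),\ \int\psi\, d\nu = s\Bigr\}.$$
Specialising to $\psi := \log\bigl|\tfrac{\partial F}{\partial y}\bigr|$, whose Birkhoff sums reproduce $\log|\partial F^n/\partial y|$, and using the identity $\lambda^c(\nu) = \int \psi \, d\nu$ (valid since $E^c$ is one-dimensional), one obtains
$$\inf_{s\in[a,b]}J(s) = h_{top}(F) - \sup\{h_\nu(F) : \lambda^c(\nu) \in [a,b]\},$$
which is exactly the constant appearing in both bounds of the proposition. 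Upper semicontinuity of $\nu \mapsto h_\nu(F)$ (consequence of expansivity) together with the continuity of $\psi$ ensures that the $\sup$ on the right is attained and that the open-interval lower bound matches the closed-interval upper bound at the level of the rate function.

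The only verification required is that the hypotheses of \cite{Bo74} and \cite{Y90} are met in our setting, namely expansivity, specification, finite topological entropy and connectedness of the phase space, all of which were already established for $F \in \mathcal{D}^r$ in the Toy Model section. I do not anticipate any substantial obstacle; the argument is essentially a packaging of two classical citations together with the identification of the Birkhoff sums of $\log|\partial F/\partial y|$ with the finite-time central Lyapunov exponents.
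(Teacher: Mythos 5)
Your proposal follows essentially the same route as the paper: it first invokes expansivity plus specification together with Bowen's theorem \cite{Bo74} to establish that $\mu_0$ is a Gibbs state for the null potential, then cites Young's large deviations principle for Gibbs measures \cite{Y90} to obtain the two bounds with the thermodynamic rate function, specializing the observable to $\psi = \log\bigl|\tfrac{\partial F}{\partial y}\bigr|$. The only minor point worth flagging is that the matching of the closed-interval upper bound with the open-interval lower bound really rests on continuity of the entropy spectrum $s \mapsto \sup\{h_\nu(F) : \lambda^c(\nu) = s\}$ (guaranteed here by concavity coming from specification), rather than on upper semicontinuity of $\nu \mapsto h_\nu(F)$ alone as you state, but this does not change the substance of the argument.
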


\begin{remark}\label{rem1}
Since  $F$ is expansive, then $\nu \mapsto h_{\nu}(F)$ is upper semicontinuous (see e.g. \cite{OV16}). By contraction principle, we know that there is the uniqueness of the large deviations rate function (see e.g. \cite{DZ98}). Thus, it follows from the two previous propositions that $I(s) = h_{top}(F) - \sup\{h_{\nu}(F) : \lambda^{c}(\nu) = s\}$, for all $s \in (\lambda^{c}_{\min} , \lambda^{c}_{\max}).$
\end{remark}

\subsection{Multifractal Analysis}\label{chapAM}

In this section will prove Theorem \ref{mainthC} and Corollary \ref{mainthC}.

\subsubsection{Entropy spectrum of central Lyapunov exponents }

In our context, we are interested in studying the following fractal sets created from central Lyapunov exponents:
$$
L^{c}_{a , b} =\{w\in T^{d} \times \Sc^{1}; \lim\frac{1}{n}\log |\frac{\partial F^{n}(w)}{\partial y}| \in [a , b]\} \text{ and }$$
$$
E_{a,b} := \Big\{w\in T^{d} \times \Sc^{1} :  \nexists \lim\frac{1}{n}\log |\frac{\partial F^{n}(w)}{\partial y}| \text{ and } 
$$
$$\limsup_{n\to\infty} \frac{1}{n}\log |\frac{\partial F^{n}(w)}{\partial y}| \in  [a,b] \text{ or } \liminf_{n\to\infty} \frac{1}{n}\log |\frac{\partial F^{n}(w)}{\partial y}| \in  [a,b]\Big \}
$$

\

 
 \begin{lemma}\label{lemX}
 Given $[a , b] \subset L^{c}(F)$ we have that $h_{L^{c}_{a , b}}(F) 
 = \sup\{h_{\nu}(F) :  \lambda^{c}(\nu) \in [a,b ]\}.$ Moreover, if $a < b$ then $h_{L^{c}_{a , b}}(F) 
 = h_{E_{a,b}}(F)$.
\end{lemma}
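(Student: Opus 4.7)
The plan is to combine the large deviation machinery of Section \ref{secLDP} with orbit constructions that exploit the specification property of $F$, and to prove the two equalities separately. For the upper bound $h_{L^{c}_{a,b}}(F) \le \sup\{h_\nu(F):\lambda^c(\nu)\in[a,b]\}$, I exploit that the measure of maximal entropy $\mu_{0}$ (obtained from the $t=0$ instance of Theorem \ref{mainthA}) is Gibbs for the zero potential, so $\mu_{0}(B(w,n,\epsilon_0)) \asymp e^{-n h_{top}(F)}$ on dynamical balls. Since every $w\in L^{c}_{a,b}$ eventually satisfies $\tfrac{1}{n}\log|\partial F^n/\partial y(w)|\in[a-\eta,b+\eta]$ for any $\eta>0$, a Bowen-type covering argument combined with Proposition \ref{LDP:thermodynamical} and Remark \ref{rem1} yields
\[
h_{L^{c}_{a,b}}(F) \;\le\; h_{top}(F) - \inf_{s\in[a-\eta,b+\eta]} I(s) \;=\; \sup\{h_\nu(F):\lambda^c(\nu)\in[a-\eta,b+\eta]\}.
\]
Letting $\eta\downarrow 0$ and invoking upper semicontinuity of $\nu\mapsto h_\nu(F)$ (which holds because $F$ is expansive) together with continuity of $\nu\mapsto\lambda^c(\nu)$ on $\mathcal{M}_1(F)$ gives the bound.

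For the matching lower bound, given any ergodic $\nu$ with $\lambda^c(\nu)\in[a,b]$, the set $G_\nu$ of $\nu$-generic points lies inside $L^{c}_{a,b}$ since $\lim \tfrac{1}{n}\log|\partial F^n/\partial y(w)| = \lambda^c(\nu)$ on $G_\nu$. The specification property together with a classical Bowen / Pfister--Sullivan construction then produces $h_{G_\nu}(F)\ge h_\nu(F)$, so that $h_{L^{c}_{a,b}}(F)\ge h_\nu(F)$; ergodic decomposition together with upper semicontinuity transfers the bound to all invariant $\nu$, proving the first equality.

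The second equality $h_{L^{c}_{a,b}}(F)=h_{E_{a,b}}(F)$ for $a<b$ is the crucial point and also the principal obstacle. The inequality $\leq$ follows from the same LDP--covering argument since points in $E_{a,b}$ have infinitely many $n$ with $\tfrac{1}{n}S_n\phi^c\in[a-\eta,b+\eta]$, but the reverse inequality requires producing many orbits on which the Birkhoff average of $\phi^c$ \emph{diverges} while limsup and liminf remain inside $[a,b]$, without sacrificing topological entropy. Following Thompson's strategy (\emph{Irregular sets, the beta-transformation and the almost specification property}, TAMS 2012), I would fix an ergodic $\nu$ with $\lambda^c(\nu)\in(a,b)$ and two ergodic measures $\nu_a,\nu_b$ with $\lambda^c(\nu_a)\approx a$, $\lambda^c(\nu_b)\approx b$ — available because $L^c(F)=[0,\lambda^c_{\max}]$ is an interval and $t\mapsto\lambda^c(\mu_{t\phi^c})$ is continuous on $(-\infty,t_0)$ by Theorem \ref{mainthA}. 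Specification then permits the construction of a Moran-like Cantor subset of $E_{a,b}$ by concatenating long Bowen blocks generic for $\nu$ (which carry the entropy), sparsely interrupted by shadowing blocks alternately for $\nu_a$ and $\nu_b$ whose lengths grow fast enough to force persistent oscillation of $\tfrac{1}{n}S_n\phi^c$, yet slowly enough to keep both limsup and liminf inside $[a,b]$ and to leave the entropy count essentially undisturbed. The delicate step — and the main technical difficulty — is calibrating this triple balance between the lengths of the generic blocks, the lengths of the oscillation blocks, and the specification gap times; once arranged, the standard bookkeeping of the Thompson method yields $h_{E_{a,b}}(F) \ge h_\nu(F)-\delta$ for arbitrary $\delta>0$, completing the proof.
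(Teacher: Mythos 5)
Your upper bound argument (Gibbs property of $\mu_0$, a Bowen covering estimate feeding into Proposition \ref{LDP:thermodynamical}, then $\delta\to 0$ via upper semicontinuity of entropy) matches the paper, which packages the covering step as a citation of \cite[Theorem A]{BV17}. Your lower bound for $L^c_{a,b}$ via generic points of ergodic $\nu$ is also essentially the route the paper takes, except that the paper invokes \cite{TV03} directly, which already gives $h_{L^c_{d,d}}(F)=\sup\{h_\nu(F):\lambda^c(\nu)=d\}$ over \emph{all} invariant $\nu$; your proposed transfer from ergodic to invariant measures via ``ergodic decomposition together with upper semicontinuity'' is shaky, since the ergodic components of $\nu$ with $\lambda^c(\nu)\in[a,b]$ need not themselves have $\lambda^c$ in $[a,b]$. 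The correct fix is entropy-density of ergodic measures for systems with specification (\cite{EKW}, which the paper uses), not raw ergodic decomposition.

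The real gap is in the lower bound for $E_{a,b}$, and you correctly sense where the difficulty lies but your proposed resolution does not work. You want to carry the entropy on generic blocks for a single high-entropy $\nu$, and force divergence by interleaving \emph{sparse} blocks generic for $\nu_a,\nu_b$ with $\lambda^c$ near the endpoints $a,b$. These two requirements are incompatible: for the running average $\frac1n S_n\phi^c$ to oscillate away from $\lambda^c(\nu)$ at infinitely many times, the $\nu_a$- or $\nu_b$-block must occupy a \emph{bounded-away-from-zero fraction} of the total orbit length at those times; but then, since $\nu_a,\nu_b$ are only constrained to have $\lambda^c$ near the endpoints and may well have small entropy (indeed at $\lambda^c=\lambda^c_{\min}$ or $\lambda^c_{\max}$ the entropy is typically far below $h_{top}(F)$), the entropy of your Moran set is dragged down by those blocks and you lose the matching lower bound. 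The paper avoids this entirely: it does \emph{not} send the oscillation to the endpoints of $[a,b]$. Starting from $\tilde\nu_1$ realizing $\sup\{h_\nu:\lambda^c(\nu)\in[a,b]\}$ it takes a small convex perturbation $\tilde\nu_2=(1-t)\tilde\nu_1+t\mu$ with $\lambda^c(\mu)\in(a,b)$, so that $\tilde\nu_2$ has $\lambda^c(\tilde\nu_2)\ne\lambda^c(\tilde\nu_1)$, both in $[a,b]$, and $|h_{\tilde\nu_1}-h_{\tilde\nu_2}|<\gamma$; then it passes to ergodic approximants $\nu_1,\nu_2$ by \cite{EKW} and runs Thompson's two-measure alternation \cite{Daniel}, where \emph{both} alternating measures carry near-maximal entropy. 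Since $E_{a,b}$ only requires the limit to fail to exist while $\liminf$ and $\limsup$ land in $[a,b]$ — not that they hit $a$ and $b$ — a tiny oscillation between two nearby interior values suffices, and the entropy count survives. This perturbation trick is the missing idea in your proposal.
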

 \begin{proof}
 Define $LD_{c,d} := \limsup_{n\to\infty} \frac1n \log \mu_{0}
	\left(w\in \cS^{1} \times T^{d} : \frac{1}{n}\log |\frac{\partial F^{n}}{\partial y}(w)| \in [c,d] \right)$. As previously mentioned, $\mu_{0}$ is a Gibbs probability with respect to dynamics $F$ and null potential. Applying \cite[Theorem A]{BV17} we have that $h_{L^{c}_{a , b}}(F) \leq  h_{top}(F) + LD_{a-\delta,b+\delta}$, for all $\delta >0$. By Proposition \ref{LDP:thermodynamical} 
 $$LD_{a-\delta,b+\delta} \leq - h_{top}(F) + \sup\{h_{\nu}(F) :  \lambda^{c}(\nu) \in [a-\delta,b +\delta]\}.$$
 Since  $\nu \mapsto h_{\nu}(F)$ is upper semicontinuous, thuas making $\delta$ go to zero, $h_{L^{c}_{a , b}}(F) \leq \sup\{h_{\nu}(F) :  \lambda^{c}(\nu) \in [a,b ]\} $.
 Similarly, if $a < b$ then
 $h_{E_{a,b}}(F) \leq \sup\{h_{\nu}(F) :  \lambda^{c}(\nu) \in [a,b ]\}$.

 On the other hand, by \cite{TV03} we know that $h_{L^{c}_{d , d}}(F)  = \sup\{h_{\nu}(F) :  \lambda^{c}(\nu) = d\}.$ The upper semicontinuity of $\nu \mapsto h_{\nu}(F)$ also guarantees that there exists $d \in [a , b]$ with  $\sup\{h_{\nu}(F) :  \lambda^{c}(\nu) \in [a , b]\} = \sup\{h_{\nu}(F) :  \lambda^{c}(\nu) = d\}.$ Hence:
 $$
 \sup\{h_{\nu}(F) :  \lambda^{c}(\nu) \in [a , b]\} = \sup\{h_{\nu}(F) :  \lambda^{c}(\nu) = d\} =$$
 $$ h_{L^{c}_{d , d}}(F) \leq h_{L^{c}_{a , b}}(F)
 \leq \sup\{h_{\nu}(F) :  \lambda^{c}(\nu) \in [a , b]\}.
 $$
 We conclude that $h_{L^{c}_{a , b}}(F) = \sup\{h_{\nu}(F) :  \lambda^{c}(\nu) \in [a,b ]\}$. 
 
 Finally, suppose that $a < b$. Let $\tilde{\nu}_{1} \in \mathcal{M}_{1}(F)$ be  such that $h_{\tilde{\nu}_{1}}(F) = \sup\{h_{\nu}(F) :  \lambda^{c}(\nu) \in [a,b ]\}$ and $\lambda^{c}(\tilde{\nu}_{1}) \in [a , b]$. Fix $\mu \in \mathcal{M}_{1}(F)$ such that $\lambda^{c}(\mu) \in (a , b)$ and $\lambda^{c}(\mu) \neq \lambda^{c}(\tilde{\nu}_{1})$. Defining $\mu_{t} := (1-t)\tilde{\nu}_{1} + t\mu$ and taking $t$ small enough, we can find $\tilde{\nu}_{2} \in \mathcal{M}_{1}(F)$ such that $|h_{\tilde{\nu}_{1}}(F) - h_{\tilde{\nu}_{2}}(F)| < \gamma$, $\lambda^{c}(\tilde{\nu}_{2}) \in [a,b]$ and $\lambda^{c}(\tilde{\nu}_{2}) \neq \lambda^{c}(\tilde{\nu}_{1})$.
 
  Since  $F$ has the specification property it is well-known that for any 
$F$-invariant probability measure $\mu$ there exists a sequence of $F$-invariant ergodic probability
measures $\mu_n$ so that $\mu_n\to\mu$ in the weak$^*$ topology and $h_{\mu_n}(f) \to h_\mu(f)$
as $n\to\infty$ (c.f.  \cite[Theorem B]{EKW}). 
 
 Thereby we can find $F-$invariant and ergodic probabilities $\nu_{1}$ and $\nu_{2}$ such that:
 \begin{itemize}
     \item[i)] $\lambda^{c}(\nu_{i}) \in [a , b]$, for $i = 1, 2$;
     \item[ii)]   $\lambda^{c}(\nu_{1}) \neq \lambda^{c}(\nu_{2})$;
     \item[iii)] $|h_{\tilde{\nu}_{1}}(F) - \sup\{h_{\nu}(F) :  \lambda^{c}(\nu) \in [a,b ]\}| < 2\gamma$.
 \end{itemize}
 
 Now the proof follows the same lines as the proof of Theorem~2.6 in \cite{Daniel}. Define $\psi : =\log |\frac{\partial F^{n}}{\partial y}| .$
Consider a strictly decreasing sequence $(\delta_k)_{k\ge 1}$ of positive numbers converging
to zero, a strictly increasing sequence of positive integers $(\ell_k)_{k\ge 1}$, so that the sets 
$$
Y_{2k+i}=\Big\{ w \in T^{d}\times \Sc^{1} \colon | \frac1n S_n\psi(w)  -  \int \psi \, d\nu_{i}| < \delta_k 
	\text{ for every } n\ge \ell_k \Big\}
$$
satisfy $\nu_i(Y_{2k+i}) > 1-\gamma$ for every $k$ ($i=1,2$).
Consider the fractal set $\tilde{F}$ given \emph{ipsis literis} by the construction of Subsection~3.1 in \cite{Daniel} with $\nu_i$ replacing $\mu_i$,  $\sup\{h_{\nu}(F) :  \lambda^{c}(\nu) \in [a,b ]\}$ replacing $C$.
From the construction (c.f. Lemma~3.8 in \cite{Daniel}) there is a sequence $(t_k)_{k\ge 1}$ so that
$$
\lim_{k\to\infty} | \frac{1}{t_{2k+i}}  S_{t_{2k+i}} \psi (w) - \int \psi \, d\nu_i | = 0
	\quad \text{for every $w\in \tilde{F}$}
$$
and $h_{\tilde{F}}(F) \ge \sup\{h_{\nu}(F) :  \lambda^{c}(\nu) \in [a,b ]\}-8 \gamma$.
In particular $F$ is contained in the irregular set $\tilde{L}_{\psi}$. Furthermore, since $\lambda^{c}(\nu_{i}) \in (a , b)$, for $i = 1, 2$, we have that $\tilde{F} \subset E_{a,b}$. Thus 
$$\sup\{h_{\nu}(F) :  \lambda^{c}(\nu) \in [a,b ]\}-8 \gamma \leq h_{\tilde{F}}(F) \leq h_{E_{a,b}}(F) \leq \sup\{h_{\nu}(F) :  \lambda^{c}(\nu) \in [a,b ]\} \Rightarrow 
$$
$$
h_{E_{a,b}}(F) = \sup\{h_{\nu}(F) :  \lambda^{c}(\nu) \in [a,b ]\}.$$
 \end{proof}

 \begin{proposition}
 i) If $[a , b] \subset [0 , \lambda^{c}_{\min}]$ then $h_{L^{c}_{a , b}}(F) = bt_{0} + h_{top}(g)$;\\
 
 ii) If $[a , b] \subset [\lambda^{c}_{\min} , \lambda^{c}_{\max}]$ then $h_{L^{c}_{a , b}}(F) = h_{top}(F) - \inf_{s \in [a , b]}I(s) = h_{top}(F) - I(d)$; where $d = \lambda^{c}(\mu_{0})$ when $\lambda^{c}(\mu_{0}) \in [a,b]$, $d = b$ when $b \leq \lambda^{c}(\mu_{0})$ and $d = a$ when $\lambda^{c}(\mu_{0}) \leq a$. 
 \end{proposition}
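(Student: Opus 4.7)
The plan is to reduce both parts to the description $h_{L^c_{a,b}}(F) = \sup\{h_\nu(F) : \lambda^c(\nu) \in [a,b]\}$ from Lemma \ref{lemX}, combined with the identity $\sup\{h_\nu(F) : \lambda^c(\nu)=s\} = h_{top}(F) - I(s)$ on $(\lambda^c_{\min},\lambda^c_{\max})$ from Remark \ref{rem1}. Recall that this inner supremum is realised by the equilibrium state $\mu_{u\phi^c}$, where $u=u(s)<t_0$ is the unique parameter with $\lambda^c(\mu_{u\phi^c})=s$.

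For part (ii) I would write
\[ h_{L^c_{a,b}}(F) \;=\; \sup_{s\in[a,b]} \big(h_{top}(F) - I(s)\big) \;=\; h_{top}(F) - \inf_{s\in[a,b]} I(s), \]
and invoke Lemma \ref{lemmaI}: because $I$ is strictly convex and vanishes only at $s=\lambda^c(\mu_0)$, its infimum on $[a,b]$ is attained at the point of $[a,b]$ closest to $\lambda^c(\mu_0)$, yielding the three cases describing $d$. The delicate point is the behaviour at the endpoints $\lambda^c_{\min},\lambda^c_{\max}$, where $I$ is only defined on the open interval. I would handle this by extracting weak$^*$ limits of the equilibrium states $\mu_{t\phi^c}$ as $t\to t_0^-$ and as $t\to-\infty$; since the observable $\log|\partial F/\partial y|$ is continuous, the central exponent passes to the limit, and the upper semicontinuity of $\nu\mapsto h_\nu(F)$ (from expansivity) together with the variational principle at $t=t_0$ identifies the limiting entropy as $h_{top}(g)+t_0\lambda^c_{\min}$, matching $h_{top}(F) - \lim_{s\to\lambda^c_{\min}^+} I(s)$ (and analogously at $\lambda^c_{\max}$).

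For part (i) the upper bound $h_{L^c_{a,b}}(F)\le bt_0+h_{top}(g)$ is immediate from the variational principle applied at $t=t_0$, using $P(t_0)=h_{top}(g)$ from Theorem \ref{mainthA}(ii): any $\nu$ with $\lambda^c(\nu)\in[a,b]$ satisfies $h_\nu(F) \le h_{top}(g) + t_0\lambda^c(\nu) \le h_{top}(g)+t_0 b$. For the matching lower bound I would exhibit an $F$-invariant probability attaining this value with central exponent exactly $b$. The two building blocks are the limit measure $\mu^*=\lim_{t\to t_0^-} \mu_{t\phi^c}$ from part (ii), satisfying $\lambda^c(\mu^*)=\lambda^c_{\min}$ and $h_{\mu^*}(F)=h_{top}(g)+t_0\lambda^c_{\min}$, and the product $Leb\times\eta$ from item (3) of the definition of $\mathcal{D}^r$, whose central exponent is $0$ and whose entropy equals $h_{top}(g)$ (the inequality $h_{Leb\times\eta}(F)\ge h_{Leb}(g)$ is recorded in the proof of Proposition \ref{L3}, while the reverse follows from Margulis--Ruelle and $\lambda^c(Leb\times\eta)=0$). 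Setting $\nu_b = \alpha \mu^* + (1-\alpha)(Leb\times\eta)$ with $\alpha=b/\lambda^c_{\min}\in[0,1]$ and invoking affinity of metric entropy gives $\lambda^c(\nu_b)=b$ and $h_{\nu_b}(F)=h_{top}(g)+t_0 b$. The degenerate case $\lambda^c_{\min}=0$ forces $a=b=0$ and is settled directly by $Leb\times\eta$.

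The main obstacle I anticipate is the endpoint analysis in (ii): verifying that $\sup\{h_\nu(F) : \lambda^c(\nu)=\lambda^c_{\min}\}$ continues the formula $h_{top}(F)-I(s)$ from the open interval to the boundary. Once the limiting equilibrium state $\mu^*$ is produced and its entropy is shown to be maximal at $\lambda^c=\lambda^c_{\min}$ via the variational principle at $t=t_0$, both the reduction in (ii) and the convex-combination construction in (i) are routine.
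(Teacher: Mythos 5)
Your proof is correct and follows essentially the same route as the paper: part (ii) reduces, via Lemma~\ref{lemX} and Remark~\ref{rem1}, to $h_{top}(F)-\inf_{s\in[a,b]}I(s)$, with the infimum located by strict convexity of $I$; part (i) combines the variational bound $h_\nu(F)\le h_{top}(g)+t_0\lambda^c(\nu)$ at $t=t_0$ with the convex combination of the limiting equilibrium state $\mu_{t_0}=\lim_{t\to t_0^-}\mu_{t\phi^c}$ (central exponent $\lambda^c_{\min}$, entropy $h_{top}(g)+t_0\lambda^c_{\min}$) and $Leb\times\eta$ (central exponent $0$, entropy $h_{top}(g)$). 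The one point where you are more explicit than the paper is the endpoint analysis in (ii): Remark~\ref{rem1} identifies $I(s)$ with $h_{top}(F)-\sup\{h_\nu(F):\lambda^c(\nu)=s\}$ only on the open interval $(\lambda^c_{\min},\lambda^c_{\max})$, so when $[a,b]$ touches an endpoint one must extend by continuity; your argument via weak$^*$ limits of the $\mu_{t\phi^c}$ together with upper semicontinuity of metric entropy and the variational principle at $t_0$ is exactly what closes this, and the paper handles the same issue only informally in the paragraph following the proposition. Both proofs rely on upper semicontinuity of $\nu\mapsto h_\nu(F)$ to make the limit measure attain the required entropy; neither introduces any genuinely different machinery.
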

 \begin{proof}
 i)] Let $d \in [0 , \lambda^{c}_{\min}]$ be and let $\nu$ be an $F-$invariant probability such that $\lambda^{c}(\nu) = d$. 
 Then $h_{\nu}(F) - t_{0}\lambda^{c}(\nu) \leq P(t_{0}) = h_{top}(g)  \Rightarrow h_{\nu}(F) \leq h_{top}(g) + dt_{0}$.
 
 On the other hand, let $\eta $ be the probability that appears in the item (3) of the definition of $F$ and $\mu_{t_{0}}$ an acumulattion point of $\mu_{t\phi^{c}}$ when $t$ converges to $t_{0}$ . Then $\lambda^{c}(Leb \times \eta) = 0$ and $h_{Leb \times\eta}(F) = h_{top}(g)$. Define $\mu_{d} := \frac{d}{\lambda^{c}_{\min}}\mu_{t_{0}} + (1-\frac{d}{\lambda^{c}_{\min}})Leb \times \eta$. Thus $\lambda^{c}(\mu_{d}) = d $ and $h_{\mu_{d}}(F) = h_{top}(g) +dt_{0}. $
 
 Therefore, applying the previous lemma, we conclude that $h_{L^{c}_{a , b}}(F) = bt_{0} + h_{top}(g)$ for all $[a , b] \subset [0 , \lambda^{c}_{\min}].$
 
 \
 
 ii)] Suppose that $[a , b] \subset [\lambda^{c}_{\min} , \lambda^{c}_{\max}]$. Applying the Lemma \ref{lemX} and Remark \ref{rem1} we conclude  that $h_{L^{c}_{a , b}}(F) = h_{top}(F) - \inf_{s \in [a , b]}I(s)$. Thus, follows the item ii) of the Lemma \ref{lemmaI} that $\inf_{s \in [a , b]}I(s) = I(d)$; where $d = \lambda^{c}(\mu_{0})$ when $\lambda^{c}(\mu_{0}) \in [a,b]$, $d = b$ when $b \leq \lambda^{c}(\mu_{0})$ and $d = a$ when $\lambda^{c}(\mu_{0}) \leq a$.
 \end{proof}
 
 It follows from the previous result that $h_{L^{c}_{0,\lambda^{c}_{\min}}}(F) = \lambda^{c}_{\min}t_{0} + h_{top}(g)$. On the other hand,
 $\lim_{s \mapsto \lambda^{c}_{\min}} I(s) = \lim_{t \mapsto -t_{0}}t\mathcal{E}'(t) - \mathcal{E}(t) = -\lambda^{c}_{\min}t_{0} - h_{top}(g) + h_{top}(F)$. Thus, if $[a , b] \cap [\lambda^{c}_{\min} , \lambda^{c}_{\max}]$ then $h_{L^{c}_{a,b}}(F)  = h_{L^{c}_{\lambda^{c}_{\min},b}}(F)$. Therefore, using the previous proposition and the properties of Lemma \ref{lemmaI}, we 
 complete the proof of the Theorem \ref{mainthC}.

\subsubsection{Dimension spectrum for Lyapunov exponents}

In this section $f: \Sc^{1} \rightarrow \Sc^{1}$ will be a transitive non invertible $C^{1}-$local diffeomorphism with $Df$ Holder continuous.

As we mentioned previously, the respective version of the Theorem \ref{mainthC} also is valid for $f$. So it remains for us to show the item (ii) of Corollary \ref{mainthD}.

For the Hausdorff dimension spectrum,  we need to recall the results of Hofbauer \cite{Ho10} in our context. For $u, v \in \mathbb{R}$ with $u \leq v$, Hofbauer considers the set

$$
M_{u, v}=\{x \in A: u \leq \underline{\lambda}(x) \leq \bar{\lambda}(x) \leq v\}
$$
where $\overline{\lambda}(x)=\limsup \frac{1}{n}S_n \log|Df(x)|$ and $\underline{\lambda}(x)=\liminf \frac{1}{n}S_n\log|Df(x)|$ are, respectively, the upper and lower Lyapunov exponents associated with the point $x$. 

Hofbauer introduces the pressure function $\tau(s):=P(-s)$, which  is the flipped version of our pressure function, and its Legendre transform $\hat{\tau}$ defined by \begin{equation}\label{LegTransf}
     \hat{\tau}(a)=\inf _{s \in \mathbb{R}}(\tau(s)-a s)
 \end{equation} on the set $H := \{a \in \R : as \leq \tau(s) \text{ for all } s \in \R\}$. Since $\tau$ is convex, $H$ is a nonempty interval.
In his paper, Hofbauer uses the Legendre transform $\hat{\tau}$ to characterize the entropy spectrum of Birkhoff averages. In fact, it has been proved that 
$$h_{M_{u, v}}(f) =\max _{a \in H \cap[u, v]} \hat{\tau}(a).$$

In this paper, it is also introduced the function $\check\tau$ defined as the following

$$
\check{\tau}(a)=\frac{1}{a} \hat{\tau}(a)=\frac{1}{a} \inf _{s \in \mathbb{R}}(\tau(s)-a s) \quad \text { for all } a \in H \backslash\{0\}
$$
which is used to characterise the dimension spectrum of Lyapunov exponents. In fact, it has been proved that 

$$HD(M_{u, v})=\max _{a \in[u, v]} \check{\tau}(a) = \max _{a \in[u, v]}\frac{1}{a} \hat{\tau}(a).$$

We will now relate to our previous context. Note initially that $H = [0 , \lambda_{\max}] = L(f)$ and 
$$\hat{\tau}(a) = h_{M_{a,a}}(f) = h_{L_{a,a}}(f).$$ In particular, if $a \in [0 , \lambda_{\max}]$ then $\hat{\tau}(a) = a t_{0}$; and if $a \in [\lambda_{\max} , \lambda_{\min}]$ then $\hat{\tau}(a) = I(a)$. Moreover:
$$L_{a , b} \subset M_{a,b} \Rightarrow HD(L_{a , b}) \leq HD(M_{a , b}) = \max_{c \in[a, b]}\frac{1}{c} \hat{\tau}(c) = \frac{1}{c} \hat{\tau}(c) =$$
$$HD(M_{c, c}) = HD(L_{c, c})
\Rightarrow HD(L_{a , b}) = HD(M_{a , b}).$$
Similarly, $HD(E_{a , b}) = HD(M_{a , b})$. In order
to finish the proof of the Corollary \ref{mainthD} it is enough to describe the properties of the function $\hat{\tau}.$

\begin{proposition}\label{propct}
$\check\tau$ is strictly concave, strictly decreasing and analytic on $(0 , \lambda_{\max})$. If $\lambda_{\min}>0$ then $\check\tau(a)\equiv t_0$ for $a \in (0,\lambda_{\min}]$ and is not analytic in $\lambda_{\min}$. 
\end{proposition}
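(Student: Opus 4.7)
The plan is to study $\check\tau(a)=\hat\tau(a)/a$ by exploiting the shape of $\tau(s)=P(-s)$ coming from Corollary \ref{mainthD}: $\tau$ vanishes identically on $(-\infty,-t_{0}]$, and is analytic and strictly convex on $(-t_{0},\infty)$, with $\tau'$ mapping this latter interval bijectively onto $(\lambda_{\min},\lambda_{\max})$. First I would compute $\hat\tau$ piecewise from the definition \eqref{LegTransf} by locating the minimizer. For $a\in(0,\lambda_{\min}]$ the infimum of $s\mapsto\tau(s)-as$ is reached at the boundary $s=-t_{0}$: on the flat piece $s\le -t_{0}$ the function equals $-as$ which is minimized at $s=-t_{0}$ with value $at_{0}$, and on $s>-t_{0}$ the derivative $\tau'(s)-a$ is non-negative because $\tau'(s)>\lambda_{\min}\ge a$. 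This gives $\hat\tau(a)=at_{0}$, hence $\check\tau(a)=t_{0}$ on $(0,\lambda_{\min}]$. For $a\in(\lambda_{\min},\lambda_{\max})$ the infimum is attained at the unique critical point $s(a)>-t_{0}$ characterized by $\tau'(s(a))=a$, so $\hat\tau(a)=\tau(s(a))-a\,s(a)$ and the Legendre identity $\hat\tau'(a)=-s(a)$ holds.

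On $(\lambda_{\min},\lambda_{\max})$, analyticity of $\check\tau=\hat\tau/a$ is immediate from the implicit function theorem applied to $\tau'(s)=a$, combined with the fact that $a$ stays bounded away from zero. Strict monotonicity follows from a direct computation:
$$\check\tau'(a)=\frac{a\hat\tau'(a)-\hat\tau(a)}{a^{2}}=\frac{-a\,s(a)-\tau(s(a))+a\,s(a)}{a^{2}}=-\frac{\tau(s(a))}{a^{2}}<0,$$
since $-s(a)<t_{0}$ and $P$ is strictly decreasing on $(-\infty,t_{0})$ with $P(t_{0})=0$, so $\tau(s(a))=P(-s(a))>0$.

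For strict concavity I would differentiate once more and use $s'(a)=1/\tau''(s(a))$ to obtain
$$\check\tau''(a)=\frac{2\tau(s(a))}{a^{3}}-\frac{1}{a\,\tau''(s(a))},$$
so the required sign reduces to the inequality $\tau'(s)^{2}>2\tau(s)\tau''(s)$ for all $s>-t_{0}$. I would verify this by examining the boundary: as $s\to -t_{0}^{+}$ one has $\tau(s)\to 0$, $\tau'(s)\to\lambda_{\min}$ and $\tau''(s)$ stays bounded (analytic extension), so the quantity tends to $\lambda_{\min}^{2}\ge 0$, positive when $\lambda_{\min}>0$; in the case $\lambda_{\min}=0$ the inequality at the boundary requires a finer expansion using $P(t)=c(t_{0}-t)^{p}+\dots$. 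Away from the boundary I would argue by analytic continuation combined with the Nagaev-type identity used in Proposition \ref{propat}: the failure of strict inequality on a subinterval would force $\check\tau$ to be affine there, which would mean the equilibrium states $\mu_{-s\phi^{c}}$ with $s$ varying in an open set all realize the same Hausdorff dimension/entropy ratio, ultimately contradicting the strict convexity of $s\mapsto P(-s)$ proved in Theorem \ref{mainthA}. I expect this step to be the main obstacle, as it requires ruling out any affine piece in a context with phase transitions; a clean way is to appeal to the multifractal formalism of Hofbauer \cite{Ho10} for the strictly convex region, treating $\lambda_{\min}$ as the endpoint.

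Finally, when $\lambda_{\min}>0$, $\check\tau$ is not analytic at $a=\lambda_{\min}$: otherwise the identity principle applied to $\check\tau-t_{0}$, vanishing on $(0,\lambda_{\min}]$, would force $\check\tau\equiv t_{0}$ on a neighborhood of $\lambda_{\min}$, contradicting the strict decrease established on $(\lambda_{\min},\lambda_{\max})$.
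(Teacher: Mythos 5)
Your formulas and the non-concavity parts of your argument are correct, and you have identified a genuine gap. What you may not realize is that this same gap is present in the paper's own proof, which contains a sign error: the paper applies the quotient rule to $\check\tau'(a)=\dfrac{a\hat\tau'(a)-\hat\tau(a)}{a^2}$ and obtains
$$\check\tau''(a)=\dfrac{\hat\tau''(a)}{a}+\dfrac{2\bigl[a\hat\tau'(a)-\hat\tau(a)\bigr]}{a^3},$$
with a $+$ where the quotient rule $(u'v-uv')/v^2$ requires a $-$. The correct identity is
$$\check\tau''(a)=\dfrac{\hat\tau''(a)}{a}-\dfrac{2\bigl[a\hat\tau'(a)-\hat\tau(a)\bigr]}{a^3}=\,-\dfrac{1}{a\,\tau''(s(a))}+\dfrac{2\tau(s(a))}{a^3},$$
which is exactly what you derived. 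With the erroneous sign both terms are negative and the conclusion appears immediate; with the correct sign, the second term is strictly positive (because $a\hat\tau'(a)-\hat\tau(a)=-\tau(s(a))<0$, as you observe), and the two terms compete. So the paper's argument for strict concavity does not go through as written.

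Your reduction of strict concavity to the inequality $\tau'(s)^2>2\tau(s)\tau''(s)$, equivalently to strict concavity of $\sqrt{\tau}$, is the right formulation of what must actually be proved, and you are right that it does not follow from strict convexity of $\tau$ alone: taking $\tau(s)=(s+t_0)^2$ near the boundary gives equality everywhere and an affine $\check\tau$, so some additional thermodynamic input on the shape of $P$ is genuinely required. Your sketched routes (boundary asymptotics, ruling out affine pieces, appealing to Hofbauer) do not yet close the gap, and you are honest about this.

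The remaining pieces of your proposal are sound and in places sharper than the paper's. The explicit localization of the Legendre minimizer gives $\hat\tau(a)=at_0$ on $(0,\lambda_{\min}]$ and hence $\check\tau\equiv t_0$ there; the monotonicity on $(\lambda_{\min},\lambda_{\max})$ follows cleanly from $\check\tau'(a)=-\tau(s(a))/a^2<0$, whereas the paper's claim that $a\hat\tau'(a)-\hat\tau(a)<0$ ``since $\hat\tau$ is strictly concave'' silently uses the normalization $\hat\tau(0)=0$, which your Legendre identity bypasses; and the identity-principle argument for non-analyticity at $\lambda_{\min}$ matches the paper. Note also that the statement's interval ``$(0,\lambda_{\max})$'' in the first sentence should read ``$(\lambda_{\min},\lambda_{\max})$'' to be consistent with the constancy claim in the second; you implicitly work with the correct intervals.
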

 
\begin{proof}
Given $a \in (\lambda_{\min},\lambda_{\max})$ we have  $\check\tau(a) = \frac{h_{top}(F) - I(a)}{a}$.  Since $\check\tau(a)=\dfrac{\hat\tau(a)}{a}$, then $\check\tau_{|(\lambda_{\min},\lambda_{\max})}$  is analytic. Now derivating for $a$:
$$ \check\tau'(a)=\dfrac{\hat\tau'(a)\cdot a - \hat\tau(a)}{a^2}=\dfrac{\hat\tau'(a)-\hat\tau(a)/a}{a}.$$
Where the numerator on the last fraction is negative, since $\hat\tau$ is a strictly concave function. Derivating one more time

   $$\check\tau''(a)=\dfrac{[\hat\tau''(a) \cdot a+\hat\tau'(a)-\hat\tau'(a)]\cdot a^2 + [\hat\tau'(a)\cdot a -\hat\tau(a)]\cdot 2a}{a^4}$$
   $$=\dfrac{\hat\tau''(a)}{a}+\dfrac{2[\hat\tau'(a)\cdot a -\hat\tau(a)]}{a^3}$$
 Where the numerator of each fraction is negative, again because $\hat\tau$ is strictly concave.
 Therefore $\check\tau$ is strictly concave and decreasing in $(\chi_{min},\chi_{max}).$
 
 Now suppose $\chi_{min}>0$, then for any $a \in (0, \chi_{min}]$ we have 
 $$\check\tau(a)=\dfrac{\hat\tau(a)}{a}=\dfrac{a\cdot t_0}{a}=t_0.$$
Finally, $\check\tau$ is not analytic on $\chi_{min}$, otherwise it should be constant on a neighbourhood of $\chi_{min}$, and we conclude the argument.
\end{proof}

The expected shapes for $\check{\tau}$ are given by the figures below:

\;

\begin{figure}[h]%
    \centering
{\includegraphics[width=5cm]{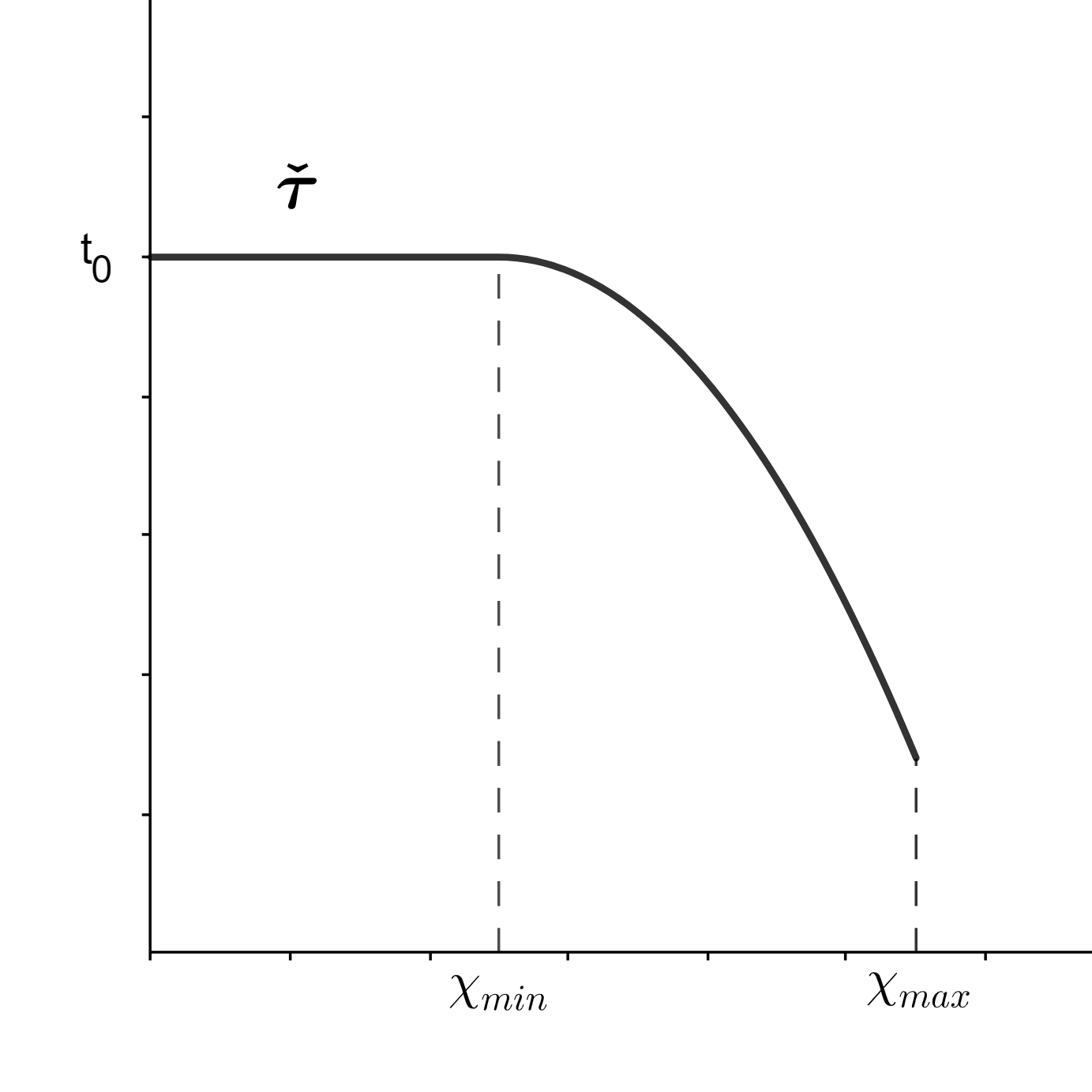} }%
    \qquad
{\includegraphics[width=5cm]{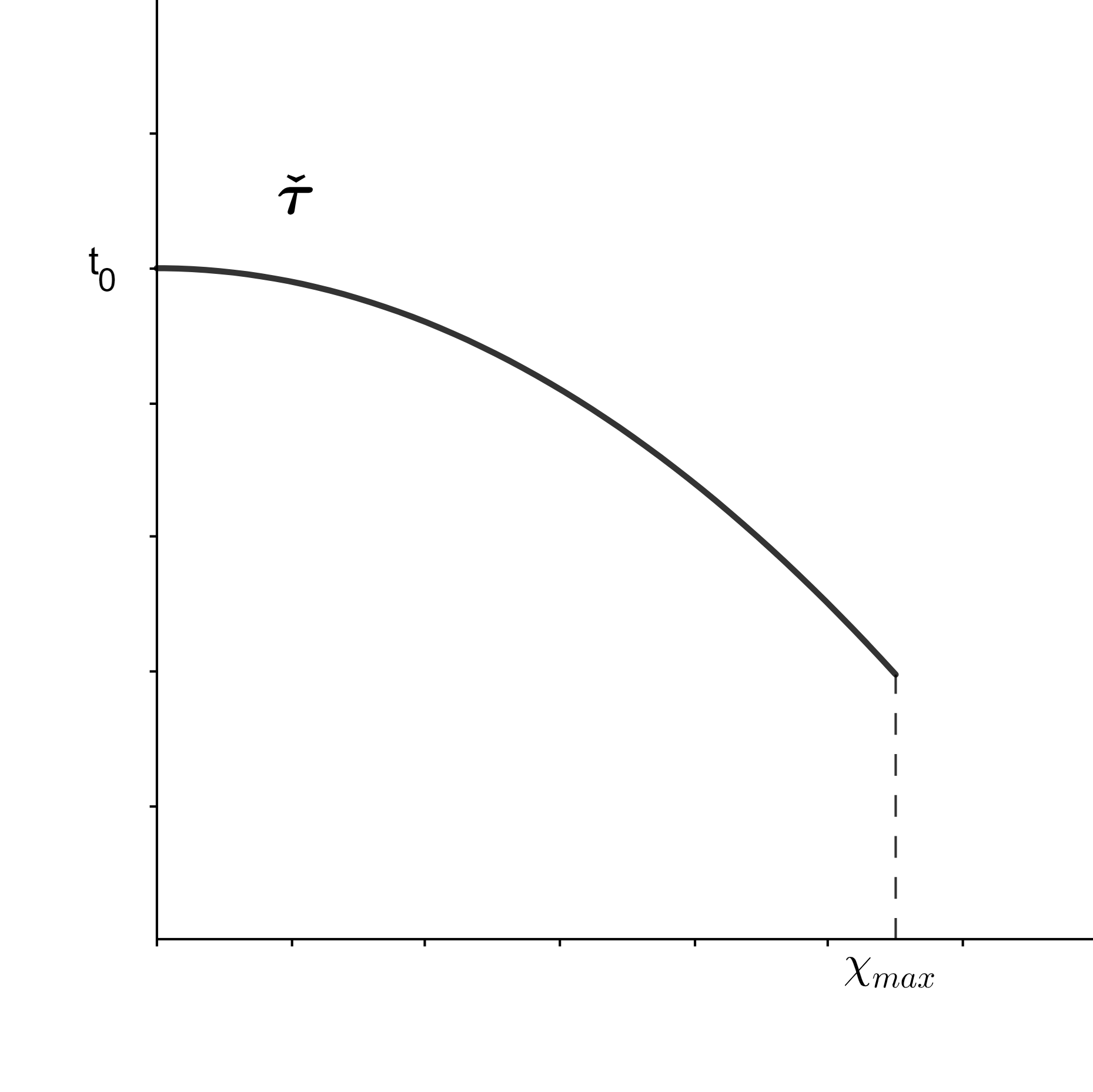} }%
    \caption{Expected shapes for $\check\tau(a)=\dim_H(L_a)$}%
    \label{fig:example}%
    
\end{figure}

\section{Differentiability on transition parameter}\label{difparam}

Another interesting question, regarding the parameter, is under what conditions the pressure function is differentiable or not at $t_0$. 
We will focus our discussion on the context of the circle, that is, Corollary \ref{mainthD}.

Note that if $f$ admits an absolutely continuous invariant probability with positive Lyapunov exponent $\chi_\mu>0$, then $t_0=1$ and the left derivative is at most $-\chi_\mu < 0$, thus the pressure function is not differentiable on $t_0=1$ (see \cite{BC21}). On a large class of maps this is well understood: interval maps which are expanding except on a finite number of indifferent fixed points.  


 In the work of Pianigiani, \cite{Pi80}, the maps can be modelled as local diffeomorphism. Indeed consider transformations $T$ for which $\left|T^{\prime}(x)\right|=1$ on a finite number of fixed points $x_{1}, \cdots, x_{n}$. Take a neighborhood $U_{i}$ of $x_{i}$ meaning right, left or bilateral neighborhood if

$$
T(x) \rightarrow x_{i} \quad \text { and } \quad\left|T^{\prime}(x)\right| \rightarrow 1
$$
as $x \rightarrow x_{i}^+, x \rightarrow x_{i}^-$ or $x \rightarrow x_{i}$

A fixed point $x_{i}$ with $\left|T^{\prime}\left(x_{i}\right)\right|=1$ is said to be \textit{regular} if there exists a neighborhood $U_{i}$ of $x_{i}$ and a real $\alpha, 0<\alpha<1$ such that

\begin{align}\label{regular}
  \left|T^{\prime \prime}(x)\right| \geqq L\left|x-x_{i}\right|^{-\alpha}, \quad x \in U_{i}  
\end{align}

Note that this assumes that $T'$ and thus $(T^{-1})'$ are differentiable for $x\neq x_i$ in $U_i$, but not in $x_i$ as the right hand side of \ref{regular} tends to infinity, as $x$ approaches $x_i$. 

The following theorem implies 
\begin{theorem} {\bf (Pianigiani)}\label{Pianigiani}
Let $T$ be a countable piecewise $C^{1}$ transformation with finite image. Let $U_{1}, \cdots, U_{n}$ be neighborhoods of $x_{1}, \cdots, x_{n}$ such that $\left|T^{\prime}(x)\right|$ decreases monotonically to 1 as $x \in U_{i}$ goes to $x_{i}$. If
$$
\left|T^{\prime}(x)\right| \geqq \lambda>1, \quad x \in[0,1] \backslash \bigcup_{i=1}^{n} U_{i}
$$
then there exists an absolutely continuous $\sigma$-finite measure $\mu$ invariant under $T$.

Moreover, if all the $x_{i}$ are regular then $\mu$ is finite.
\end{theorem}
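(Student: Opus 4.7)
The plan is to attack this via an inducing scheme together with the classical transfer operator machinery for uniformly expanding maps. Away from the indifferent fixed points the dynamics is uniformly expanding with factor $\lambda>1$, so the difficulty is entirely concentrated in the neighborhoods $U_i$, where $|T'(x)|\searrow 1$ as $x\to x_i$.

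First I would remove small ``inner'' neighborhoods $V_i\Subset U_i$ of the indifferent fixed points and put $Y:=[0,1]\setminus \bigcup_i V_i$. Define the first return map $\widehat T:Y\to Y$, with return time $\tau:Y\to\mathbb{N}$. Using the monotone decrease of $|T'|$ in $U_i$ together with the uniform expansion $|T'|\geq\lambda>1$ on the complement of $\bigcup U_i$, standard distortion estimates along orbits that travel through $U_i\setminus V_i$ show that $\widehat T$ is piecewise $C^1$, uniformly expanding and has bounded distortion on each branch. I would then invoke the Lasota--Yorke / Folklore theorem (applicable thanks to the finite image/countable piecewise $C^1$ hypothesis) to produce an absolutely continuous $\widehat T$-invariant probability $\widehat\mu=\widehat h\,\mathrm{Leb}$ on $Y$, with $\widehat h$ bounded away from $0$ and $\infty$ on each branch.

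Next I would lift $\widehat\mu$ to a $T$-invariant absolutely continuous $\sigma$-finite measure on $[0,1]$ by the usual Kac-type construction: set
\begin{equation*}
\mu(A):=\sum_{k\geq 0}\widehat\mu\bigl(T^{-k}(A)\cap\{\tau>k\}\bigr).
\end{equation*}
A direct computation shows $\mu$ is $T$-invariant; absolute continuity passes from $\widehat\mu$ to $\mu$ branch by branch; and $\mu(Y)=\widehat\mu(Y)=1$, so $\mu$ is $\sigma$-finite. Up to here no regularity of the $x_i$ was used, giving the first assertion of the theorem.

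For the finiteness statement, the key quantity is the integral $\int_Y \tau\,d\widehat\mu=\sum_{k\geq 1}\widehat\mu(\tau\geq k)$, since by the Kac formula $\mu([0,1])=\int_Y\tau\,d\widehat\mu$. The hard part, and the main obstacle, is to estimate the tail $\widehat\mu(\tau\geq k)$ using the regularity condition $|T''(x)|\geq L\,|x-x_i|^{-\alpha}$ with $0<\alpha<1$. The strategy is to analyze the inverse branch of $T$ fixing $x_i$: writing the dynamics near $x_i$ in normal form, the regularity hypothesis gives a one-sided differential inequality of the type $|T(x)-x|\geq c\,|x-x_i|^{2-\alpha}$ (up to integration of $T''$ twice), and hence the orbit $\{T^{-k}(y)\}$ of a point $y$ near $x_i$ satisfies $|T^{-k}(y)-x_i|\lesssim k^{-1/(1-\alpha)}$ with $\tfrac{1}{1-\alpha}>1$. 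This polynomial contraction rate, combined with the bounded density $\widehat h$ of $\widehat\mu$, yields $\widehat\mu(\tau\geq k)\lesssim k^{-1/(1-\alpha)}$, whose sum converges precisely because $\alpha<1$. Thus $\mu([0,1])<\infty$, proving the second assertion. The delicate point in this last step is making the distortion control near $x_i$ uniform enough along a whole orbit of length $k$, but this follows from iterating the regularity inequality together with the monotonicity of $|T'|$ in $U_i$.
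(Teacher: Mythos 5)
The paper itself does not prove this theorem: it is quoted verbatim from Pianigiani's article \cite{Pi80}, whose very title (``First return map and invariant measures'') tells you that your inducing strategy is the one actually used in the source. So at the level of overall architecture --- remove small shells $V_i\Subset U_i$ around the indifferent fixed points, build the first return map $\widehat T$ to $Y=[0,1]\setminus\bigcup V_i$, produce an a.c.i.p.\ $\widehat\mu$ for $\widehat T$, lift it to a $T$-invariant $\sigma$-finite measure by the Kac tower, and then use the regularity condition $|T''|\ge L|x-x_i|^{-\alpha}$ to get polynomial escape $|T^{-k}(y)-x_i|\lesssim k^{-1/(1-\alpha)}$ and summable return times --- your plan is on target, and the tail computation (the recursion $a_{k-1}\ge a_k+ca_k^{2-\alpha}$ giving $a_k\lesssim k^{-1/(1-\alpha)}$, hence $\sum_k\widehat\mu(\tau\ge k)<\infty$ precisely because $\alpha>0$) is correct.

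The genuine gap is the sentence ``standard distortion estimates \ldots show that $\widehat T$ \ldots has bounded distortion on each branch.'' The hypothesis on $T$ is only piecewise $C^1$, with no H\"older or $C^{1+\epsilon}$ modulus of continuity for $T'$, and bounded distortion for an induced map with unboundedly long branches is emphatically \emph{not} automatic in the $C^1$ category: the distortion of the $n$-th return branch involves $\sum_{j<n}\big|\log|T'(T^jx)|-\log|T'(T^jy)|\big|$, and without a modulus of continuity on $T'$ there is no reason this sum stays bounded as $n\to\infty$. Pianigiani does not go through bounded distortion; he uses the monotone decrease of $|T'|$ near each $x_i$ together with a Lasota--Yorke / bounded-variation estimate on the Perron--Frobenius operator of $\widehat T$ (a R\'enyi-type condition, not a Koebe-type distortion bound), which is exactly what makes the theorem work at $C^1$ regularity. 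A second, smaller imprecision: the branches of $\widehat T$ that require control are the orbits that enter $V_i$ and escape slowly, not the ones that ``travel through $U_i\setminus V_i$'' as you write; the latter leave immediately. If you either upgrade the regularity hypothesis to $C^{1+\epsilon}$ (so that bounded distortion does hold) or replace the bounded-distortion step by the bounded-variation/Lasota--Yorke argument actually available at $C^1$, the rest of your proof goes through as written.
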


For our context, a $C^{1+p}$-local diffeomorphism of the circle with regular indifferent fixed points, we can translate this regularity condition into both $f$ and its inverse branches having bi-H\"older derivative, and possibly, but not necessarily, piecewise $C^2$ except on the $x_i$.
So we could expect lower regularity of the function outside the indifferent fixed points, however in most examples the map is in fact $C^2$ except on these fixed points.

 Finally, we get an absolutely continuous invariant probability for a large class of intermittent maps on the circle, such as the Manneville-Pomeau for $0<p<1$

\begin{equation}
 f_p(x)=\begin{cases}
 x+2^p x^{1+p}, \text{ if } 0\leq x \leq 1/2 \\
 x-2^{p}(1-x)^{1+p}, \text{ if } 1/2 < x \leq 1.
 \end{cases}
 \end{equation}

\noindent In fact for these maps we even get the equality on (\ref{regular}) for $x_1=0\sim 1$, $\alpha=(1-p)$ and $L=2^p(1+p)p $. Thus $0 \sim 1$ is a regular fixed point.


 The author gives conditions for this a.c.i.p.  to be unique, which are not easy to check in a more general setting. However, this a.c.i.p.  is obtained via an inducing scheme on intervals outside the indifferent points, that is via an expanding first return map. Since our maps are topologically exact, this first return map is transitive expanding, thus it admits only one a.c.i.p., making the a.c.i.p.  for our maps $f$ also unique.

Now let $\mu$ be such an a.c.i.p.  for an intermittent map of the circle with regular fixed points. Since $|f'|> 1$ except on a finite number of points and $\mu$ is absolutely continuous with respect to Lebesgue with density positive on an open set , we have that $\chi_\mu=\int \log|Df| d\mu > 0$. Therefore by uniqueness of the a.c.i.p., we have the equivalences:
\begin{enumerate}
    \item the indifferent fixed points are regular, that is, having a strictly $C^{1+p}$ expansion, 
    \item the map $f$ admits an unique finite a.c.i.p.  
    \item the pressure function is not differentiable at $t_0=1$; 
\end{enumerate}

\


\vspace{.3cm}
\subsection*{Acknowledgements.}
This work is part of the second and third author's PhD thesis at the Federal University of Bahia. TB was partially supported by Conselho Nacional de Desenvolvimento Científico e Tecnologico - CNPq (Grants PQ-2021 and Universal-18/2021). VC and AF were supported by CAPES-Brazil. 

\bibliographystyle{alpha}

\end{document}